\newcommand{\st}{>}
\newcommand{\newW}{\overline{W}}
\newcommand{\N}{\mathbb{N}}
\newcommand{\Z}{\mathbb{Z}}
\newcommand{\R}{\mathbb{R}}
\newcommand{\B}{\mathfrak{B}}
\newcommand{\V}{\mathbb{V}}
\newcommand{\Prob}{\mathbb{P}}
\newcommand{\E}{\mathbb{E}}
\newcommand{\G}{\mathbb{G}}
\newcommand{\A}{\mathcal{A}}
\newcommand{\1}{\mathbh{1}}
\newcommand{\bT}{\mathbf{T}}
\newcommand{\Kl}{K_{\mathrm{l}}}
\newcommand{\Ku}{K_{\mathrm{u}}}
\newcommand{\eqref}[1]{(\ref{#1})}
\newcommand{\SolM}{\mathcal{S}(\mathcal{M})}
\newcommand{\SolL}{\mathcal{S}(\mathcal{L})}
\newcommand{\SolU}{\mathcal{S}(\mathcal{U})}
\newtheorem{Theorem}{Theorem}[section]
\newtheorem{Prop}[Theorem]{Proposition}
\newtheorem{Cor}[Theorem]{Corollary}
\newtheorem{Lemma}[Theorem]{Lemma}
\begin{document}
\begin{frontmatter}

\title{The functional equation of the smoothing~transform}
\runtitle{The functional equation of the smoothing transform}

\begin{aug}
\author[A]{\fnms{Gerold} \snm{Alsmeyer}\ead[label=e1]{gerolda@math.uni-muenster.de}},
\author[B]{\fnms{J. D.} \snm{Biggins}\ead[label=e2]{j.biggins@sheffield.ac.uk}}
and
\author[C]{\fnms{Matthias} \snm{Meiners}\corref{}\ead[label=e3]{matthias.meiners@math.uu.se}\thanksref{t1}}
\thankstext{t1}{Supported in part by DFG-Grant Me 3625/1-1.}
\runauthor{G. Alsmeyer, J. D. Biggins and M. Meiners}
\affiliation{University of M\"unster, University of Sheffield and
Uppsala University}
\address[A]{G. Alsmeyer \\
Institut f\"ur Mathematische Statistik \\
Universit\"at M\"unster \\
Einsteinstra\ss e 62\\
DE-48149 M\"unster\\
Germany\\
\printead{e1}}

\address[B]{J. D. Biggins \\
School of Mathematics and Statistics \\
University of Sheffield \\
Sheffield S3 7RH\\
United Kingdom \\
\printead{e2}}

\address[C]{M. Meiners \\
Matematiska institutionen \\
Uppsala universitet \\
Box 480\\
751 06 Uppsala\\
Sweden\\
\printead{e3}}
\end{aug}

% HISTORY:
\received{\smonth{3} \syear{2010}}
\revised{\smonth{1} \syear{2011}}

% ABSTRACT
%
\begin{abstract}
Given a sequence $T = (T_i)_{i \geq1}$ of nonnegative random
variables, a~function $f$ on the positive halfline can be transformed
to $\mathbb{E} \prod_{i \geq1} f(tT_i)$. We study the fixed points
of this
transform within the class of decreasing functions. By exploiting the
intimate relationship with general branching processes, a full
description of the set of solutions is established without the moment
conditions that figure in earlier studies. Since the class of functions
under consideration contains all Laplace transforms of probability
distributions on $[0,\infty)$, the results provide the full description
of the set of solutions to the fixed-point equation of the smoothing
transform, $X\stackrel{d}{=}\sum_{i \geq1} T_i X_i$, where
$\stackrel{d}{=}$ denotes equality of the corresponding laws, and $X_1,
X_2, \ldots$ is a sequence of i.i.d.\ copies of $X$ independent of $T$.
Further, since left-continuous survival functions are covered as well,
the results also apply to the fixed-point equation
$X\stackrel{d}{=}\inf\{ X_i/T_i\dvtx i \geq1, T_i > 0\}$. Moreover, we
investigate the phenomenon of endogeny in the context of the smoothing
transform and, thereby, solve an open problem posed by Aldous and
Bandyopadhyay.
\end{abstract}

% KEYWORDS
%
\begin{keyword}[class=AMS]
\kwd[Primary ]{39B22}
\kwd[; secondary ]{60E05}
\kwd{60J85}
\kwd{60G42}.
\end{keyword}

\begin{keyword}
\kwd{Branching process}
\kwd{branching random walk}
\kwd{Choquet--Deny-type functional equation}
\kwd{endogeny}
\kwd{fixed point}
\kwd{general branching process}
\kwd{multiplicative martingales}
\kwd{smoothing transformation}
\kwd{stochastic fixed-point equation}
\kwd{Weibull distribution}
\kwd{weighted branching}.
\end{keyword}

\end{frontmatter}

%s1 ###
\section{Introduction} \label{sec:Intro}

Let $T := (T_i)_{i \geq1}$ be a sequence of nonnegative random
variables, and consider the mapping $f \mapsto\E\prod_{i \geq1} f(t
T_i)$ for suitable functions $f\dvtx\R\to\R$. Then it is natural to call
$f$ a fixed point of this transformation if
\begin{equation} \label{eq:FE1}
f(t) = \E\prod_{i \geq1} f(t T_i).
\end{equation}
The main objective here is to identify all fixed points within certain
classes of functions, which becomes an increasingly challenging task as
the available class gets bigger.
There is a substantial literature, \cite
{Big1977,DL1983,Liu1998,Kyp1998,Iks2004,BK2005}, and relatively
complete results, improved here, when $f$ must be the Laplace transform
of a nonnegative random variable. Much less was known up to now \cite
{JR2004,AR2008,AM2009} when $f$ is from the larger class of survival
functions of nonnegative random variables (or simply monotone
decreasing functions with range $[0,1]$). Solving the problem in this
case is one of the main achievements of this paper. In fact the ideas
also allow the available class to include suitable nonmonotonic
functions, as will be indicated in the final section.

When $f$ is the Laplace transform of a nonnegative variable $X$,
equation~\eqref{eq:FE1} can be rewritten in terms of random variables as
\begin{equation} \label{eq:SumFP}
X \stackrel{d}{=} \sum_{i \geq1} T_i X_i,
\end{equation}
where $X_1, X_2, \ldots$ are i.i.d.\ copies of $X$ independent of $T$, and
$\stackrel{d}{=}$ means equality in distribution. An $X$, or its
distribution, satisfying this is often called a~fixed point of the
smoothing transform (going back to Durrett and Liggett~\cite{DL1983}).
If instead $f$ is the (left-continuous) survival function of a
nonnegative variable $X$, equation~\eqref{eq:FE1} can be rewritten as
\begin{equation} \label{eq:MinFP}
X \stackrel{d}{=} \inf \biggl\{ \frac{X_i}{T_i}\dvtx i \geq1, T_i > 0 \biggr\},
\end{equation}
where the infimum over the empty set is defined to be $\infty$.
A solution $X$, and the associated survival function $P(X \geq t)$, is
called nondegenerate if \mbox{$\Prob(X \in(0,\infty)) > 0$}.
The inversion $x \mapsto x^{-1}$ turns this ``inf-type'' equation into a
``sup-type'' one, so the theory will cover these too.
Both~\eqref{eq:SumFP} and~\eqref{eq:MinFP} are examples of \emph
{stochastic fixed-point equations} (also called \emph{recursive
distributional equations} in~\cite{AB2005}). Thus, for these two
cases, characterizing fixed points for equation~\eqref{eq:FE1} in the
appropriate class corresponds to identifying the $X$ which can arise in
these stochastic fixed-point equations. In considering \eqref
{eq:SumFP}, the relevant class of functions (Laplace transforms) is
quite restricted, and so the problem is correspondingly easier. It
turns out that solutions to
\eqref{eq:SumFP} are intimately related to solutions to \eqref
{eq:MinFP}, which allows the characterization of the latter using
results for the former.

There is considerable interest in, and literature on, stochastic
fixed-point equations like~\eqref{eq:SumFP} and~\eqref{eq:MinFP}.
They occur in various areas of applied probability:
probabilistic combinatorial optimization~\cite{AS2004}, stochastic
geometry~\cite{PW2006}, the analysis of recursive algorithms and data
structures~\cite{Roe1991,GruRoe1996,Dev2001,RoeRue2001,NR2005} and
also in connection with branching particle systems~\cite{BK1997,HS2009}.
Inhomogeneous versions of~\eqref{eq:SumFP} and the sup-type version of
\eqref{eq:MinFP} arise in the average-case and worst-case analysis of
divide-and-conquer algorithms, and
R{\"u}schendorf~\cite{Rue2006}, Theorem 3.1 and Theorem 4.2, showed, in
a more restricted setting, that the solutions to the inhomogeneous
versions are in one-to-one\vadjust{\goodbreak} correspondence with the solutions of their
homogeneous counterparts.
In theoretical probability, they are of relevance in connection with
the central limit problem~\cite{Bre1992} and in extreme value theory
\cite{Res1987}, where they can be interpreted as generalizations of
the distributional equations of stability and min-stability,
respectively. For further information we refer to the survey by Aldous
and Bandyopadhyay~\cite{AB2005}.

Without loss of generality, suppose
that the number $N = \sum_{i \geq1} \1_{\{T_i > 0\}}$ of positive
terms satisfies
$N = \sup\{i \geq1\dvtx T_i > 0\}$,
and define the function
\[
% \label{eq:m}
m\dvtx[0,\infty) \to[0,\infty],\qquad
\theta \mapsto \E\sum_{i=1}^N T_i^{\theta}.
\]
Its canonical domain, $\{m < \infty\}$, is an interval $\subseteq
[0,\infty)$, for $m$ may be viewed as the Laplace transform of the
intensity measure of the point process $\mathcal{Z} := \sum_{i=1}^N
\delta_{S(i)}$.
Here $S(i) := -\log T_i$, $i \in\N$ (and $S(i) := \infty$ if $T_i = 0$).
The following assumptions will be in force
throughout.
{\renewcommand{\theequation}{A\arabic{equation}}
\setcounter{equation}{0}
\begin{eqnarray} \label{eq:A1}
\Prob (T \in\{0,1\}^{\N} ) &<& 1.
\\[-1pt]
\label{eq:A2}
\E N &>& 1.
\end{eqnarray}\vspace*{-24pt}
\begin{equation}
\label{eq:A3}
\mbox{There is an } \alpha> 0 \mbox{ such that } 1 = m(\alpha) <
m(\beta) \mbox{ for all } \beta\in[0,\alpha).
\end{equation}}
\vspace*{-\baselineskip}
\setcounter{equation}{3}

\noindent This number $\alpha$ is called the \emph{characteristic exponent} (\textit{of}
$T$). Previous~\cite{DL1983,Liu1998,AR2006,AR2008} and recent \cite
{AM2010a} studies show that a satisfactory characterization will
typically entail the existence of some $\alpha> 0$ such that $m(\alpha
)=1$, as in~\eqref{eq:A3},
though~\cite{JR2004} and~\cite{AM2009} provide a study of a case
where this fails. The discussions in~\cite{Liu1998} for equation\
\eqref{eq:SumFP} and~\cite{JR2004,AM2009} for equation\ \eqref
{eq:MinFP} imply that only simple cases are ruled out by~\eqref{eq:A1}
and~\eqref{eq:A2}. Let $r>1$ be the smallest number such that the
strictly positive elements of $T$ are concentrated almost surely on
$r^{\Z}$, and let $r$=1 otherwise, that is, when the smallest closed
(in~$\R^+$) multiplicative group containing the strictly positive
elements of $T$ is~$\R^+$. The former is called the \emph
{$r$-geometric} (or \emph{lattice}) case, the latter the \emph
{nongeometric} (or \emph{continuous}) case.
There are more technicalities to deal with before the main results can
be stated but a special case is given now as illustration.\vspace*{-2pt}

\begin{Theorem} \label{Thm:Min_solutions_first}
Suppose that~\eqref{eq:A1}--\eqref{eq:A3} hold true, that $\Prob
(N<\infty)=1$, $m(\theta)<\infty$ for some $\theta\in[0,\alpha)$
and that $T$ is nongeometric. Then there exists a
nonnegative and finite random variable $W$ satisfying $\Prob(W>0)>0$ and
\begin{equation} \label{eq:sm-fe}
W \stackrel{d}{=} \sum_{i \geq1} T_i^{\alpha} W_i,
\end{equation}
(where $W_1, W_2,\ldots$ are i.i.d.\ copies of $W$ independent of $T$)
such that nondegenerate survival functions that are solutions to~\eqref{eq:FE1}
are given by the family, parametrized by $h \in\R^+$,
\begin{equation} \label{eq:W-mix}
f(t) = \E\exp(-W h t^\alpha).\vspace*{-2pt}
\end{equation}
\end{Theorem}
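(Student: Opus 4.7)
Under (A3), the additive Biggins martingale $W_n := \sum_{|v|=n} T(v)^\alpha$ along the weighted branching tree — where $T(v)$ denotes the product of the $T$-factors along the unique path from the root to the vertex $v$ — is a non-negative mean-one martingale. The assumption that $m(\theta)<\infty$ for some $\theta \in [0,\alpha)$ places $\alpha$ in a one-sided neighbourhood on which $m$ is analytic, and combined with (A3) (which forces $m>1$ on $[0,\alpha)$ and $m=1$ at $\alpha$) it yields $m'(\alpha)<0$, so that the $T(v)^\alpha$-tilted branching random walk is super-critical, together with the requisite $L\log L$ moment. Biggins' martingale convergence theorem then delivers $W_n \to W$ a.s.\ and in $L^1$ with $\E W = 1$, producing the desired non-degenerate solution of \eqref{eq:sm-fe}.

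\textbf{Step 2: members of \eqref{eq:W-mix} solve \eqref{eq:FE1}.} For fixed $h>0$, substituting $f(t) = \E \exp(-Wht^\alpha)$ into the right-hand side of \eqref{eq:FE1}, conditioning on $T$, and using the independence and i.i.d.\ structure of $(W_i)_{i \geq 1}$ give
\begin{equation*}
\E \prod_{i \geq 1} f(tT_i) ~=~ \E \exp\Bigl(-ht^\alpha \sum_{i \geq 1} T_i^\alpha W_i\Bigr) ~=~ \E \exp(-ht^\alpha W) ~=~ f(t),
\end{equation*}
the middle equality being \eqref{eq:sm-fe}; the case $h=0$ returns the trivial solution $f \equiv 1$.

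\textbf{Step 3: every survival function solution has this form.} Given such an $f$, the central device is the multiplicative martingale $M_n(t) := \prod_{|v|=n} f(tT(v))$. Being $[0,1]$-valued, it converges a.s.\ and in $L^1$ to a limit $M_\infty(t)$ with $\E M_\infty(t) = f(t)$, and the branching recursion transfers to $M_\infty(t) = \prod_{|v|=1} M_\infty^{(v)}(tT_v)$, with i.i.d.\ copies $M_\infty^{(v)}$ of $M_\infty$ independent of $T$.

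The main obstacle, and the heart of the proof, is to show $M_\infty(t) = \exp(-hWt^\alpha)$ almost surely for a deterministic constant $h = h(f) \geq 0$. Passing to the logarithmic scale via $g(s) := -\log M_\infty(e^s)$ converts the multiplicative recursion into the additive branching random walk identity $g(s) = \sum_{|v|=1} g^{(v)}(s-S(v))$. The non-geometric hypothesis ensures that the positive $T_i$ generate a dense multiplicative subgroup of $\R^+$, whereupon a Choquet-Deny type argument within the general branching process framework developed in the body of the paper pins $g$ down to the form $g(s) = U e^{\alpha s}$ for a single non-negative random prefactor $U$. This $U$ then inherits the smoothing equation \eqref{eq:sm-fe}, and the uniqueness-up-to-scale of its $L^1$-solutions (\cite{Liu1998,AR2006}) forces $U = hW$ for a deterministic $h \geq 0$, which completes the classification.
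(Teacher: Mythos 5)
Your Step 1 contains a genuine error. You assert that (A3) combined with the one-sided moment hypothesis $m(\theta)<\infty$ for some $\theta<\alpha$ forces $m'(\alpha)<0$ together with the $L\log L$ condition, so that Biggins' theorem delivers a non-degenerate limit of the additive martingale $W_n^{(\alpha)}=\sum_{|v|=n}L(v)^{\alpha}$. Neither conclusion follows. Since $m$ is convex, (A3) only yields that the left derivative at $\alpha$ is $\leq 0$; it can vanish. For a concrete example take $N=2$, $T_2\equiv c/4$, and $T_1=2c$ with probability $1/2$ and $T_1=0$ with probability $1/2$, with $c$ chosen so that $m$ attains its minimum value $1$ at $\alpha=2/3$: then (A1)--(A3) and your one-sided moment hypothesis hold, $m$ is finite everywhere, yet $m'(\alpha)=0$. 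In that boundary case the additive martingale converges to $0$ almost surely, so your construction produces the degenerate $W\equiv 0$ and the family \eqref{eq:W-mix} collapses to $f\equiv 1$, which is not a survival function in the relevant class. Even when $m'(\alpha)<0$, the $L\log L$ moment in (A4a) is an independent assumption that does not follow from (A4b); the paper is explicit that under (A1)--(A4) the martingale limit $W^{(\alpha)}$ \emph{can} be degenerate precisely when (A4a) fails and (A4b) holds, and the existence of a non-trivial solution to \eqref{eq:sm-fe} is instead obtained from Liu's Theorem 1.1 (the paper's Theorem \ref{Prop:essential_uniqueness_and_reg_var}(a)).

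Step 2 is fine and matches Lemma \ref{Lem:simple_inclusions}. Step 3 correctly identifies the multiplicative martingale as the central object, but it conceals the hardest part of the argument. The Choquet--Deny device does not act directly on $g(s)=-\log M_\infty(e^s)$; it is applied to subsequential limits of $(1-f(ut))/(1-f(t))$, and it requires first an a priori growth bound (membership in the class $\mathcal{S}(\mathcal{M})^{\beta}$ of \eqref{eq:Finf^theta}) to justify dominated convergence in the telescoping identity. Establishing that this growth bound holds automatically for every $f\in\mathcal{S}(\mathcal{M})$ — the content of Lemmas \ref{Lem:mon_bound}--\ref{Lem:Finf=Finfalpha} via the ratio limit theorems for general branching processes (Proposition \ref{Prop:ratios}, Theorem \ref{Thm:GBP_limit}) and the sandwich between $\varphi(\Ku t^{\alpha})$ and $\varphi(\Kl t^{\alpha})$ — is a substantial chunk of the paper that your sketch passes over in a single sentence. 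Finally, the uniqueness-up-to-scale that you invoke to convert $U$ into $hW$ is not simply quotable from \cite{Liu1998,AR2006} under the present weak hypotheses; it is Theorem \ref{Prop:essential_uniqueness_and_reg_var}(b), which the paper must itself prove (Section \ref{sec:unique}).
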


Note that~\eqref{eq:sm-fe} is just~\eqref{eq:SumFP} with $T$ replaced
by $T^{(\alpha)}:=(T_1^\alpha, T_2^\alpha, \ldots)$. It is already
known, under mild conditions that are relaxed a little\vadjust{\goodbreak} in Theorem~\ref
{Thm:essential_uniqueness_and_reg_var} here, that solutions to~\eqref{eq:sm-fe}
of the form described in Theorem~\ref{Thm:Min_solutions_first} are
unique up to a scale factor. Therefore, in~\eqref{eq:W-mix},
the same family will result, whichever solution to~\eqref{eq:sm-fe} is
selected. Form~\eqref{eq:W-mix} is a mixture (with mixing variable~$W$)
of Weibull survival functions. This form is not surprising in the
light of results for deterministic $T$ described in~\cite{AR2008}
and the corresponding results for~\eqref{eq:SumFP} going back to
Durrett and Liggett~\cite{DL1983}. In the latter case, $f$ has to be a
Laplace transform and~\eqref{eq:W-mix} expresses it as a $W$-mixture
of positive $\alpha$-stable transforms (necessitating also that
$\alpha\leq1$).

It is natural to deploy iteration to study a functional equation. A key
aspect of the approach here is to remove the expectation on the right
of~\eqref{eq:FE1} and then iterate. Suitably formulated, this
iteration derives naturally from a branching process based on $T$.
Solutions to~\eqref{eq:FE1} correspond to certain (multiplicative)
martingales. Studying these, and their limits, delivers information on
the form of the solutions. This basic idea goes back at least to Neveu
\cite{Nev1988} and is used more recently in~\cite{BK1997,BK2005} and
\cite{AM2009}.
This technique is a~kind of disintegration of~\eqref{eq:FE1}, since it
considers the stochastic processes obtained by removing the expectation
in it and its iterates. For fixed $t$, under the iteration of the
disintegration, the conditions imply that the arguments of the function
$f$ on the right of the equation become small. Hence, the properties of
the whole function will be implicit in its behavior for small arguments.

Further, our approach brings to the forefront a fundamental property of
solutions to~\eqref{eq:sm-fe}: \emph{endogeny}.
Heuristically speaking, a solution $W$ to~\eqref{eq:sm-fe} is
endogenous if
$W$ can be constructed from the branching process mentioned above
without additional randomization.
In their survey paper, Aldous and Bandyopadhyay posed the open problem
of studying the endogeny property in the context of the smoothing
transform~\cite{AB2005}, Open Problem 18. In Section~\ref{sec:end_FP}
(see Theorem~\ref{Thm:endogeny}), we give the solution to this problem
under mild conditions.

%s2 ###
\section{Main results} \label{sec:Main_results}

We continue with further assumptions on $T$, namely,
{\renewcommand{\theequation}{A4\alph{equation}}
\setcounter{equation}{0}
\begin{eqnarray}  \label{eq:A4a}
\qquad &\displaystyle \E\sum_{i \geq1} T_i^{\alpha} \log T_i \in (-\infty,0)
\quad \mbox{and}\quad
\E \biggl(\sum_{i \geq1} T_i^{\alpha} \biggr) \log^+ \biggl(\sum_{i \geq1}
T_i^{\alpha} \biggr) < \infty.
\\
\label{eq:A4b}
\qquad &\mbox{There exists some } \theta\in[0,\alpha) \mbox{ satisfying }
m(\theta) < \infty.
\end{eqnarray}}
\vspace*{-\baselineskip}

\noindent In order to prove our main results, we need at least one of the
assumptions,~\eqref{eq:A4a},~\eqref{eq:A4b}, to be true; in other
words, we need the following assumption:
{\renewcommand{\theequation}{A\arabic{equation}}
\setcounter{equation}{3}
\begin{equation} \label{eq:A4}
\eqref{eq:A4a} \mbox{ or }~\eqref{eq:A4b} \mbox{ holds}.
\end{equation}
}
\vspace*{-\baselineskip}

\noindent It is worth mentioning that~\eqref{eq:A4} is fairly weak compared to
the assumptions in earlier works on fixed points of the smoothing
transform, that is, on solutions to~\eqref{eq:SumFP}. For ease of
reference to earlier results, when~\eqref{eq:A3} holds let
\[
m'(\alpha)=\E\sum_{i \geq1} T_i^{\alpha} \log T_i,
\]
even when $m$ is finite only at $\alpha$; whenever we refer to
$m'(\alpha)$ we will be assuming the expectation exists, which it
certainly does when~\eqref{eq:A4} holds.

We impose one further assumption. To state it, call a random variable
$W$ nonnull if $\Prob(W \not= 0) > 0$, and assume that~\eqref{eq:A3} holds.
{\renewcommand{\theequation}{A\arabic{equation}}
\setcounter{equation}{4}
\begin{equation} \label{eq:E}
\qquad\   \begin{tabular}{@{}p{325pt}@{}}
There is a finite, nonnull, nonnegative random variable
$W$, with Laplace transform $\varphi$, satisfying \eqref
{eq:sm-fe}.
\end{tabular}\hspace*{-10pt}
\end{equation}}
\vspace*{-\baselineskip}
\setcounter{equation}{0}

\noindent The next result indicates that this assumption is known to hold widely.
It follows directly from Theorem 1.1 in~\cite{Liu1998} when $\Prob
(N<\infty)=1$ and, as is explained further in
Section~\ref{sec:end_FP}, from~\cite{Lyo1997} when~\eqref{eq:A4a} holds.

\begin{Prop}\label{existence}
If~\eqref{eq:A1}--\eqref{eq:A3} and, furthermore, either \eqref
{eq:A4a} or $\Prob(N<\infty)$ $=1$ hold true, then so does~\eqref{eq:E}.
\end{Prop}

The $r$-geometric case involves some complications that require
additional notation.
A function $h$ is multiplicatively $r$-periodic if $h(x)=h(rx)$ for all $x$.
Given $r>1$, let $\mathfrak{H}_r$ be the set of multiplicatively
$r$-periodic functions $h\dvtx\R^{+}\rightarrow\R^{+}$
such that $t \mapsto h(t) t^{\alpha}$ is nondecreasing [where $\alpha
$ comes from~\eqref{eq:A3}].
To deal with all cases together, let $\mathfrak{H}_1$ be the positive
constant functions in the nongeometric case (when $r=1$).
In the corresponding result for~\eqref{eq:SumFP}, stated here as a
corollary, it is further assumed that $\alpha\in(0,1]$. Then, let
$\mathfrak{P}_r$ be the set of multiplicatively $r$-periodic functions
$h\dvtx\R^{+}\rightarrow\R^{+}$ such that $h(t)t^{\alpha}$ has a
completely monotone derivative, and let $\mathfrak{P}_1$ be the
positive constant functions in the nongeometric case---these
functions were introduced in~\cite{DL1983}.
When $\alpha=1$, the requirements force $h$ to be constant, so that in
this case $\mathfrak{P}_r \equiv\mathfrak{P}_1$.

Henceforth, let $\SolM$ be the set of solutions to the functional
equation~\eqref{eq:FE1} within the class
\begin{eqnarray*}
\mathcal{M}
& = & \{f\dvtx[0,\infty) \to[0,1]\dvtx \mbox{$f$ is decreasing with } \\
& &
\ f(0) = f(0+) = 1 \mbox{ and } f(t) \in(0,1)  \mbox{ some } t > 0\}.
\end{eqnarray*}
Here, $f(0+)$ denotes the right limit of $f$ at $0$. Now we are ready
to state our first main result. This result will be derived from
Theorem~\ref{Thm:Disintegration}, which is more fundamental but needs
more background material to state.

\begin{Theorem} \label{Thm:Min_solutions}
Suppose that~\eqref{eq:A1}--\eqref{eq:E} hold.
Then $\SolM$ is given by the family, parametrized by $h \in\mathfrak{H}_r$,
\begin{equation} \label{eq:F_inf=W_Lambda_alpha}
f(t) = \E\exp(-W h(t) t^\alpha) = \varphi(h(t) t^\alpha) \qquad (t \geq0).
\end{equation}
\end{Theorem}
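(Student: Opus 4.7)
The plan has two directions. For the sufficiency, fix $h \in \mathfrak{H}_r$ and a nonnegative $W$ satisfying \eqref{eq:sm-fe}, and set $f(t) := \E\exp(-Wh(t)t^\alpha)$. Then $f$ is a Laplace transform composed with the nondecreasing map $t \mapsto h(t)t^\alpha$, so $f \in \mathcal{M}$. Conditioning on $T$ and using i.i.d.\ copies $W_1,W_2,\ldots$ of $W$,
\begin{equation*}
\E \prod_{i \geq 1} f(tT_i)
~=~ \E \exp\Big(-\sum_{i\geq 1} W_i\, h(tT_i)(tT_i)^\alpha\Big).
\end{equation*}
In the $r$-lattice case $T_i \in r^{\Z}$ a.s.\ on $\{T_i>0\}$ so $h(tT_i)=h(t)$; in the non-geometric case $h$ is constant. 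Either way the right-hand side equals $\E\exp(-h(t)t^\alpha W) = f(t)$ by \eqref{eq:sm-fe}, verifying \eqref{eq:FE1}.

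For the necessity, given $f \in \SolM$ I would pass to the weighted branching model on the Ulam--Harris tree. Attach an i.i.d.\ copy $T(u)$ of $T$ to each node $u$ and let $L(u)$ be the product of weights along the path from the root, $S(u):=-\log L(u)$. The iterated fixed-point equation shows
\begin{equation*}
M_n(t) ~:=~ \prod_{|u|=n} f(tL(u))
\end{equation*}
is a bounded, hence convergent, multiplicative martingale; write $M_\infty(t)$ for its a.s.\ limit, so $f(t) = \E M_\infty(t)$ by bounded convergence. Under \eqref{eq:A3}--\eqref{eq:A4} all $L(u)$ tend to $0$ a.s.\ along branches, so $M_\infty(t)$ is determined by the behaviour of $f$ near $0$. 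In parallel, under \eqref{eq:A4}, the additive Biggins martingale $W_n := \sum_{|u|=n}L(u)^\alpha$ converges in $L^1$ to the (essentially unique) solution $W$ of \eqref{eq:sm-fe}; this is the $W$ which will appear in the representation.

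The crux is to identify $M_\infty(t)$ as $\exp(-Wh(t)t^\alpha)$ almost surely for some $h \in \mathfrak{H}_r$. To do this, I would transfer \eqref{eq:FE1} into a statement about $\varphi(s) := -\log f(e^{-s})$, which becomes an additive Choquet--Deny-type equation along the branching random walk $(S(u))$. Combining monotonicity of $f$ with the renewal theorem in its lattice or non-lattice form (according as $r>1$ or $r=1$), one extracts the asymptotic $\varphi(s) \sim h(e^{-s})e^{-\alpha s}$ as $s\to\infty$ with $h:\R^+\to\R^+$ multiplicatively $r$-periodic, and the monotonicity of $f$ forces $h(t)t^\alpha$ to be non-decreasing, i.e.\ $h \in \mathfrak{H}_r$. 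Plugging the asymptotic into
\begin{equation*}
-\log M_n(t) ~=~ \sum_{|u|=n}\varphi\big(S(u)+\log(1/t)\big)
\end{equation*}
gives, since $\min_{|u|=n}S(u)\to\infty$, the a.s.\ equivalence $-\log M_n(t) \sim h(t)t^\alpha W_n \to h(t)t^\alpha W$, so $M_\infty(t)=\exp(-Wh(t)t^\alpha)$ a.s.; taking expectations yields \eqref{eq:F_inf=W_Lambda_alpha}.

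The principal obstacle is the Choquet--Deny/renewal step that produces the asymptotic $\varphi(s)\sim h(e^{-s})e^{-\alpha s}$ with the correct periodicity and monotonicity of $h$, since this is precisely what packages a merely monotone solution into the Weibull-mixture shape; the tight interplay with \eqref{eq:A4a} versus \eqref{eq:A4b} is required to control the fluctuations of $\varphi$ along the BRW well enough for the renewal theorem to apply. Once this is in hand, the branching-process identification of $M_\infty$ and the passage from a.s.\ to distributional conclusions are essentially bookkeeping.
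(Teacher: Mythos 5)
Your overall architecture agrees with the paper's: disintegrate into the multiplicative martingale $M_n(t)$, identify its a.s.\ limit $M_\infty(t)$, and use a Choquet--Deny/renewal argument to control the behaviour of $f$ near $0$. But two of the steps you treat as routine are in fact wrong or gapped in a way that would break the identification of $M_\infty(t)$.

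First, the asymptotic $-\log f(e^{-s}) \sim h(e^{-s})e^{-\alpha s}$ that you extract from the Choquet--Deny/renewal step is stronger than anything that is true in general. What the renewal/Choquet--Deny machinery actually yields (Theorems~\ref{Thm:Slow_variation}--\ref{Thm:slow_variation_geometric} and \ref{Thm:Reg_var_in_0} of the paper) is \emph{regular variation}: $1-f(t) = h(t)\,t^{\alpha}\ell(t)$ with $\ell$ slowly varying at $0$, and $\ell$ can genuinely diverge (it behaves like $-\log t$ when $m'(\alpha)=0$, for instance). Consequently your final computation
$-\log M_n(t) \sim \sum_{|u|=n} h(t)t^{\alpha}L(u)^{\alpha}\ell(tL(u))$
does \emph{not} collapse to $h(t)t^{\alpha} W_n^{(\alpha)}$: the slowly varying factor $\ell$ is evaluated at the random points $tL(u)$ and does not factor out. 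The paper avoids this by never trying to peel off $\ell$ at all. Instead, the regular-variation statement is used only through ratios: one shows that for $W_t := -\log M_\infty(t)$ the relation $W_{st} = s^{\alpha} W_t$ holds a.s.\ (Lemma~\ref{Lem:Identify_end_FP_and_disintegration}, where the $\ell$'s cancel in the ratio $(1-f(stL(u)))/(1-f(tL(u)))$), and separately that $W_t$ is an \emph{endogenous} fixed point of \eqref{eq:sm-fe}, whose uniqueness up to scale (Proposition~\ref{Prop:uniqueness_of_end_FP}) then forces $W_t = h(t)t^{\alpha}W$. The control of the nonuniformity in $\ell$ over the branching population is exactly where the CMJ/Nerman machinery (Theorem~\ref{Thm:GBP_limit}, Proposition~\ref{Prop:ratios}) is needed, and it is not ``bookkeeping.''

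Second, your identification of the mixing variable $W$ as the $L^1$-limit of the additive martingale $W_n^{(\alpha)}=\sum_{|u|=n}L(u)^{\alpha}$ under \eqref{eq:A4} is not correct. Under \eqref{eq:A4b} alone the $x\log x$ condition may fail, in which case $W_n^{(\alpha)}\to 0$ a.s.\ and there is no $L^1$ convergence. The representing $W$ in the theorem is then some other non-trivial endogenous solution to \eqref{eq:sm-fe} (e.g.\ a Seneta--Heyde or derivative-martingale limit), obtained in the paper as $W = -\log\Phi(1)$ from the disintegration $\Phi$ of a solution to \eqref{eq:sm-fe} guaranteed by Theorem~\ref{Prop:essential_uniqueness_and_reg_var}(a). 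So the step ``$W_n\to W$ in $L^1$'' needs to be replaced by the endogeny/uniqueness argument; your sketch has no substitute for it.
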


Let $\SolL$ be the set of solutions to~\eqref{eq:FE1} within the
class $\mathcal{L}$ of Laplace transforms of probability distributions
$\not= \delta_0$ on $[0,\infty)$.

\begin{Cor} \label{Cor:Sum_solutions}
Suppose that conditions~\eqref{eq:A1}--\eqref{eq:E} hold and that
\mbox{$\alpha\leq1$}.
Then $\SolL$ is given by the family in \eqref
{eq:F_inf=W_Lambda_alpha} when parametrized by $h \in\mathfrak{P}_r$.
\end{Cor}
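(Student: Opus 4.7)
The plan is to derive this corollary from Theorem \ref{Thm:Min_solutions} by pinpointing which parameters $h \in \mathfrak{H}_r$ in the representation \eqref{eq:F_inf=W_Lambda_alpha} actually produce Laplace transforms. Since every Laplace transform of a non-degenerate probability measure on $[0,\infty)$ is a decreasing, continuous, $[0,1]$-valued function with $f(0)=1$ and $f(t) \in (0,1)$ for some $t > 0$, we have $\mathcal{L} \subseteq \mathcal{M}$ and hence $\SolL \subseteq \SolM$. Theorem \ref{Thm:Min_solutions} then supplies a non-negative $W$ solving \eqref{eq:sm-fe} such that every $f \in \SolL$ is of the form $f(t) = \E\exp(-W h(t) t^\alpha)$ for some $h \in \mathfrak{H}_r$, and the task reduces to showing that such an $f$ belongs to $\mathcal{L}$ precisely when $h \in \mathfrak{P}_r$.

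For sufficiency, fix $h \in \mathfrak{P}_r$ and set $g(t) := h(t) t^\alpha$. By definition $g$ has completely monotone derivative, and $g(0+) = 0$ because $h$ is bounded on compact subintervals of $(0,\infty)$ (by multiplicative $r$-periodicity when $r>1$, or because $h$ is a positive constant when $r=1$) and $t^\alpha \to 0$. Hence $g$ is a Bernstein function vanishing at $0$, and by the defining property of Bernstein functions $t \mapsto e^{-w g(t)}$ is the Laplace transform of a probability distribution on $[0,\infty)$ for every $w \geq 0$. Mixing these against the law of $W$ shows that $f(t) = \E\exp(-W g(t))$ is itself the Laplace transform of a probability distribution, so $f \in \mathcal{L}$. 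The hypothesis $\alpha \leq 1$ enters precisely here: for $\alpha > 1$ no multiplicatively periodic (hence bounded) $h$ can make $h(t)t^\alpha$ Bernstein, since Bernstein functions grow at most linearly, so $\mathfrak{P}_r$ would be empty.

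For the converse, assume $f \in \mathcal{L}$ with $f = \phi \circ g$, where $\phi(s) := \E e^{-sW}$. Because $W \not\equiv 0$ (otherwise $f \equiv 1 \notin \mathcal{L}$), $\phi$ is strictly completely monotone. One shows $g$ must be a Bernstein function by iterating \eqref{eq:FE1} along the weighted branching tree generated by $T$: this yields, for each $n \geq 0$,
\[
f(t) ~=~ \E\exp\Bigl(-\sum_{|v|=n} W_v \, g(t L(v))\Bigr),
\]
with independent copies $W_v$ of $W$ indexed by the vertices $v$ of the $n$-th generation and $L(v)$ the product of weights along the path from the root to $v$. Combining this representation with the Biggins-martingale convergence of $\sum_{|v|=n} L(v)^\alpha$ to $W$ (the backbone of Theorem \ref{Thm:Min_solutions}) and the scaling law $W \stackrel{d}{=} \sum T_i^\alpha W_i$, one extracts that $t \mapsto e^{-s g(t)}$ must itself be completely monotone for every $s > 0$. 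Bernstein's classical characterisation then forces $g$ to be a Bernstein function, equivalently $h \in \mathfrak{P}_r$.

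The main obstacle is precisely the necessity step above: lifting complete monotonicity of a \emph{single} mixture $\phi \circ g$ (with $\phi$ fixed as the Laplace transform of $W$) to the stronger statement that $e^{-sg}$ is completely monotone for every $s > 0$, which is what Bernstein's theorem needs. The general composition characterisation of Bernstein functions requires $\phi \circ g$ to be completely monotone for every completely monotone $\phi$, which is a priori more than what we have. Bridging this gap demands the full branching structure of $T$ and the self-similarity of $W$; the $r$-geometric case is especially delicate because $\mathfrak{H}_r \setminus \mathfrak{P}_r$ is genuinely non-empty, so non-Bernstein $g$ coming from $h \in \mathfrak{H}_r$ must be excluded.
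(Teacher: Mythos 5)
Your framing is correct: deduce the corollary from Theorem \ref{Thm:Min_solutions} by observing $\SolL \subseteq \SolM$, handle the sufficiency direction via Bernstein functions (this matches the paper's Lemma \ref{Lem:simple_inclusions}), and reduce the necessity direction to showing that $g(t) = h(t)t^{\alpha}$ has a completely monotone derivative. The observation explaining why $\alpha \leq 1$ is essential is also sound. But the necessity direction, as you acknowledge, is the whole point — and you do not actually prove it. The sketch you give (``iterate along the branching tree, use martingale convergence and self-similarity to extract that $e^{-sg}$ is completely monotone for every $s$'') names the ingredients without supplying an argument, and the final paragraph concedes the gap outright.

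The paper closes this gap with a short and quite different argument that sidesteps Bernstein's composition characterisation entirely. Having written $f(t) = \varphi(h(t)t^{\alpha})$ with $\varphi$ the Laplace transform of $W$, one only needs the non-trivial direction in the $r$-geometric case (when $r=1$, $\mathfrak{H}_1 = \mathfrak{P}_1$). Normalising $h(1)=1$ and using the multiplicative $r$-periodicity of $h$ together with the regular variation of $1-\varphi$ at $0$ with index $1$ — which is already available from Theorem \ref{Prop:essential_uniqueness_and_reg_var}(b) — one gets
\begin{equation*}
\frac{1-\varphi(tr^{-n})}{1-\varphi(r^{-n})}
~=~
\frac{1-\varphi\bigl(h(t)t^{\alpha}r^{-\alpha n}\bigr)}{1-\varphi(r^{-\alpha n})}
~\longrightarrow~ h(t)t^{\alpha}
\qquad (n\to\infty).
\end{equation*}
Each function $t \mapsto (1-\varphi(tr^{-n}))/(1-\varphi(r^{-n}))$ has a completely monotone derivative (a positive constant times $-\varphi'(tr^{-n})$), so the pointwise limit $h(t)t^{\alpha}$ inherits this property, i.e.\ $h \in \mathfrak{P}_r$. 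This is exactly the step your proposal defers to the branching machinery; the paper shows it follows from a one-line Tauberian-type manipulation once regular variation of $1-\varphi$ is in hand. You should incorporate this argument, or an equivalent one, to complete the proof.
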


\begin{Cor} \label{Cor:inf-solutions}
Suppose that~\eqref{eq:A1}--\eqref{eq:E} hold.
Then the set of survival functions of nondegenerate solutions to \eqref
{eq:MinFP} is given by the family~\eqref{eq:F_inf=W_Lambda_alpha}
parametrized by the left-continuous $h \in\mathfrak{H}_r$.
\end{Cor}

%s3 ###
\section{Further results and discussion} \label{sec:Further_results}

From the formulations of Theorem~\ref{Thm:Min_solutions} and Corollary
\ref{Cor:Sum_solutions} it is obvious that solutions to \eqref
{eq:sm-fe} play a critical role since they appear as mixing
distributions in all other cases. We need information on these fixed
points at an early stage of our analysis. Hence, we continue with the
following results.

\begin{Theorem} \label{Thm:essential_uniqueness_and_reg_var}
$\!\!\!$Assume~\eqref{eq:A1}--\eqref{eq:E} hold.
Let $D(t) := t^{-1}(1-\varphi(t))$. Then~$D$ is slowly varying at $0$,
and $\varphi$ is unique up to a positive scaling factor in its argument.
\end{Theorem}

This result can be concluded from existing literature if we
additionally assume
\eqref{eq:A4b} and $P(N<\infty)=1$. In this case, the claimed
uniqueness of $\varphi$ follows from~\cite{BK2005}, Theorem 3. The
regular variation of $1-\varphi$ follows from~\cite{BK1997}, Theorem
1.4, in the case when $m'(\alpha) < 0$ and from~\cite{Kyp1998}, Theorem
1, if $m'(\alpha) = 0$.
We prove the result as stated here in Section~\ref{sec:unique}; there
we will see that for uniqueness up to scaling~\eqref{eq:A4a} can be
replaced by $\E W < \infty$, which is in fact weaker.

Understanding the behavior of solutions to~\eqref{eq:FE1} near zero is
an essential step in proving Theorem~\ref{Thm:Min_solutions},
but it is also interesting in its own right because it allows the
derivation of moment results for the corresponding distributions by
classical Tauberian theorems in the case of~\eqref{eq:SumFP} and by
elementary calculations for the sup-type analog of~\eqref{eq:MinFP}.

For a solution $f$, the near-zero behavior is best considered in terms
of $D_{\alpha}$ defined by
\[
% \label{eq:D_alpha}
D_{\alpha}(t) = \frac{1-f(t)}{t^{\alpha}},\qquad  t > 0.
\]
When $\alpha=1$ and $f$ is a Laplace transform, the convexity of $f$
forces $D_1(t) = (1-f(t))/t$, to be decreasing in $t$, and then
$D_1(0+)$ is finite exactly when the corresponding random variable has
a finite mean.

\begin{Theorem} \label{Thm:Slow_variation}
Suppose that~\eqref{eq:A1}--\eqref{eq:E} hold true, that $T$ is
nongeometric and $f \in\SolM$. Then $D_{\alpha}(t)$ is slowly
varying as $t \downarrow0$.
\end{Theorem}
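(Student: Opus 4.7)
The plan is to translate the fixed-point equation for $f$ into an approximate linear averaging identity for $D_\alpha$, and then, exploiting non-arithmeticity, to deduce slow variation via a Choquet-Deny / renewal-theoretic argument of the kind used in \cite{BK2005,BK1997}.

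First I would start from $1-f(t) = \E[1-\prod_{i\geq 1}f(tT_i)]$ and use the elementary two-sided inequality
\begin{equation*}
\sum_i x_i - \sum_{i<j}x_ix_j \;\leq\; 1-\prod_i(1-x_i) \;\leq\; \sum_i x_i,
\end{equation*}
applied with $x_i = 1-f(tT_i) \in [0,1]$. Substituting $1-f(tT_i) = t^\alpha T_i^\alpha D_\alpha(tT_i)$ and dividing by $t^\alpha$ yields
\begin{equation*}
\E\sum_{i\geq 1} T_i^\alpha D_\alpha(tT_i) - t^\alpha R(t) \;\leq\; D_\alpha(t) \;\leq\; \E\sum_{i\geq 1} T_i^\alpha D_\alpha(tT_i),
\end{equation*}
where $R(t) = \E\sum_{i<j}T_i^\alpha T_j^\alpha D_\alpha(tT_i)D_\alpha(tT_j)$. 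Because $m(\alpha)=1$, the operator $\varphi\mapsto\E\sum_i T_i^\alpha\varphi(tT_i)$ averages against a probability measure which, under the logarithmic change of variables $t=e^{-s}$, corresponds to a random walk on $\R$ with step law $\nu$ given by $\int g\, d\nu = \E\sum_i T_i^\alpha g(-\log T_i)$; the non-geometric hypothesis is precisely what makes $\nu$ non-arithmetic.

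Second, I would iterate the upper inequality $n$ times along the associated weighted branching process to obtain $D_\alpha(t)\leq \E\sum_{v\in\mathcal{G}_n}T(v)^\alpha D_\alpha(tT(v))$, where $\mathcal{G}_n$ is the $n$-th generation and $T(v)$ the product of weights along the path to $v$. In the logarithmic coordinates $u(s):=D_\alpha(e^{-s})$ this reads $u(s)\leq \E u(s + S_1^*+\cdots+S_n^*)$ with $S_k^*$ i.i.d.\ of law $\nu$, so $u$ is excessive for the $\alpha$-biased random walk. In tandem I would establish the matching lower bound by showing that the cross-term $t^\alpha R(t)$ is $o(D_\alpha(t))$ as $t\downarrow 0$, so that $u$ is asymptotically $\nu$-harmonic at infinity; the non-arithmetic Choquet-Deny mechanism then forces $u(s+c)/u(s)\to 1$ as $s\to\infty$ for every fixed $c\in\R$, which when re-expressed in $t$ is exactly the slow variation of $D_\alpha$ at $0$.

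The main obstacle I anticipate is the a priori control needed to discard the error $t^\alpha R(t)$. The trivial bound $D_\alpha(t)\leq t^{-\alpha}$ is far too weak, and since $D_\alpha$ may itself blow up at $0$ (precisely when the underlying $W$ fails to have finite mean), it cannot be dominated by a constant. A sharper bound must be extracted from the multiplicative martingale $M_n(t)=\prod_{v\in\mathcal{G}_n}f(tT(v))$ attached to $f$, combined with the $L^1$-convergence of the intrinsic martingale $\sum_{v\in\mathcal{G}_n}T(v)^\alpha$, which is available under our assumptions through Theorem~\ref{Prop:essential_uniqueness_and_reg_var}. A secondary technicality is guaranteeing that $\nu$ carries the integrability required to run a renewal-type argument: this is immediate under \eqref{eq:A4a}, while under \eqref{eq:A4b} one must appeal to the uniqueness results of Section~\ref{sec:unique} to identify the right scaling.
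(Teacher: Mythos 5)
Your overall plan shares the paper's Choquet--Deny skeleton, but there are three places where the sketch, as written, would not go through.

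\textbf{The decomposition.} You start from the Bonferroni-type two-sided bound $\sum_i x_i-\sum_{i<j}x_ix_j\le 1-\prod_i(1-x_i)\le\sum_i x_i$, which leaves you with a quadratic cross-term $t^{\alpha}R(t)$ to dispose of. The paper instead uses the \emph{exact} telescoping identity $1-\prod_i f_i=\sum_i(1-f_i)\prod_{k<i}f_k$, leading (after dividing by $u^{\alpha}(1-f(t))$) to
\[
\frac{1-f(ut)}{u^{\alpha}(1-f(t))}=\E\sum_{i\ge 1}T_i^{\alpha}\,\frac{1-f(uT_it)}{(uT_i)^{\alpha}(1-f(t))}\prod_{k<i}f(utT_k),
\]
in which the only analytic task is a dominated-convergence passage to the limit; the factor $\prod_{k<i}f(utT_k)\to 1$. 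Your cross-term $R(t)$ involves $D_{\alpha}(tT_i)D_{\alpha}(tT_j)$ and cannot be shown to be $o(D_{\alpha}(t)/t^{\alpha})$ without exactly the kind of a priori control on the ratio $D_{\alpha}(tT_i)/D_{\alpha}(t)$ that the whole proof is trying to establish --- the estimate you would need to discard $R(t)$ is precisely what makes $f\in\SolM^{\beta}$, so the route is circular unless that control is established independently first, which the telescoping identity renders unnecessary.

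\textbf{The missing selection step.} You propose to view $u(s)=D_{\alpha}(e^{-s})$ as ``asymptotically $\nu$-harmonic at infinity'' and then invoke Choquet--Deny to conclude $u(s+c)/u(s)\to1$. But the Choquet--Deny theorem (the version in \cite{RS1994} used here) applies to a genuine harmonic function that is locally integrable/bounded, and $u$ may well tend to infinity. The paper handles this by a Helly-type selection argument (Lemma \ref{Lem:Selection}): pass to subsequential limits of the \emph{bounded} ratio $(1-f(ut_n))/(1-f(t_n))\in[0,1]$, obtain an increasing limit function $g$ on $(0,1]$, and then show that $\tilde g(x)=e^{\alpha x}g(e^{-x})$ is exactly Choquet--Deny harmonic. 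Without first extracting a bounded limit the Choquet--Deny machinery does not apply, so this step is a genuine gap.

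\textbf{The a priori bound.} You correctly identify that a growth bound on $D_{\alpha}$ near $0$ is ``the main obstacle,'' but the sketch of how to get it is both vague and partially wrong. You suggest combining the multiplicative martingale with ``the $L^1$-convergence of the intrinsic martingale $\sum_{v\in\mathcal G_n}T(v)^{\alpha}$, which is available \dots through Theorem~\ref{Prop:essential_uniqueness_and_reg_var}.'' That theorem asserts existence and scale-uniqueness of solutions to \eqref{eq:sm-fe}, not $L^1$-convergence of the additive martingale; under \eqref{eq:A4b} alone the martingale $W_n^{(\alpha)}$ may converge to $0$ a.s., so there is no nontrivial $L^1$-limit to use. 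The paper's actual argument (Step 4, Lemmas \ref{Lem:mon_bound}--\ref{Lem:Finf=Finfalpha}) is substantially heavier: it requires the Nerman/CMJ general-branching-process machinery of Section \ref{sec:GBP} (ratio convergence along stopping lines $\mathcal T_t$, Proposition \ref{Prop:ratios} and Theorem \ref{Thm:GBP_limit}), the disintegration convergence along stopping lines (Lemma \ref{Lem:disintegration_via_HSL}), and the comparison in Lemma \ref{Lem:K_l_K_u} that sandwiches $f(t)$ between $\varphi(\Ku t^{\alpha})$ and $\varphi(\Kl t^{\alpha})$ with $0<\Kl\le\Ku<\infty$. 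That sandwich is what delivers the $\SolM^{\beta}$-type regularity, and there is no shortcut to it via a simple $L^1$-convergence statement.

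In short: the Choquet--Deny reduction is the right idea, but you should replace Bonferroni by the telescoping identity, insert the selection argument before applying Choquet--Deny, and recognise that the a priori regularity $\SolM^{\beta}=\SolM$ is the real work and cannot be dispatched by $L^1$-convergence of the additive martingale under \eqref{eq:A4}.
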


A corresponding result for the geometric case is stated next, although
it is tangential to the development of the results.\vadjust{\goodbreak}
\begin{Theorem} \label{Thm:slow_variation_geometric}
Suppose that~\eqref{eq:A1}--\eqref{eq:E} hold true, that $T$ is
$r$-geometric and $f \in\SolM$. Then there exists a function $h \in
\mathfrak{H}_r$ such that
$D_{\alpha}(t)/h(t)$ is slowly varying as $t \downarrow0$.
\end{Theorem}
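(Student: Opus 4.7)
The strategy rests on the complete characterisation of $\SolM$ provided by Theorem \ref{Thm:Min_solutions}. Given $f \in \SolM$, that theorem supplies a non-negative random variable $W$ solving \eqref{eq:sm-fe} together with a function $h \in \mathfrak{H}_r$ such that $1-f(t) = 1 - \varphi(h(t) t^\alpha)$ for all $t \geq 0$, where $\varphi$ denotes the Laplace transform of $W$. I would take this same $h$ as the candidate in the present theorem. Dividing by $t^\alpha$ then gives
\[
\frac{D_\alpha(t)}{h(t)} ~=~ \frac{1-\varphi(h(t)\, t^\alpha)}{h(t)\, t^\alpha} ~=:~ L\bigl(h(t)\, t^\alpha\bigr),
\]
and by Theorem \ref{Prop:essential_uniqueness_and_reg_var}(b), $1-\varphi$ is regularly varying with index $1$ at $0$, so $L(u) := (1-\varphi(u))/u$ is slowly varying as $u \downarrow 0$.

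The remaining task is to transfer slow variation in the argument $u$ to slow variation in the argument $t$. A key preliminary observation is that every $h \in \mathfrak{H}_r$ is bounded above and below by positive constants: multiplicative $r$-periodicity confines all values of $h$ to $h([1,r))$, while the monotonicity of $t \mapsto h(t) t^\alpha$ combined with periodicity squeezes $h(t) t^\alpha \in [h(1), h(1) r^\alpha]$ for $t \in [1,r)$, pinning $h(t)$ between $h(1) r^{-\alpha}$ and $h(1) r^\alpha$ uniformly in $t>0$. In particular $h(t) t^\alpha \to 0$ as $t \downarrow 0$, and for each fixed $c>0$ the ratio
\[
\frac{h(ct)(ct)^\alpha}{h(t)\, t^\alpha} ~=~ c^\alpha\, \frac{h(ct)}{h(t)}
\]
lies in a fixed compact subinterval of $(0,\infty)$, independently of $t$.

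Applying the uniform convergence theorem for slowly varying functions to $L$ (which asserts $L(\lambda u)/L(u) \to 1$ as $u \downarrow 0$ uniformly in $\lambda$ on compact subsets of $(0,\infty)$), with $u = h(t) t^\alpha$ and $\lambda = c^\alpha h(ct)/h(t)$, then yields $L(h(ct)(ct)^\alpha)/L(h(t) t^\alpha) \to 1$ as $t \downarrow 0$ for every $c>0$. This is precisely the asserted slow variation of $D_\alpha(t)/h(t)$. The main obstacle is therefore entirely upstream: once Theorem \ref{Thm:Min_solutions} and Theorem \ref{Prop:essential_uniqueness_and_reg_var}(b) are in place, Theorem \ref{Thm:slow_variation_geometric} reduces to the short bookkeeping above, consistent with the paper's own remark that this statement is tangential to the main line of argument.
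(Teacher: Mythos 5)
Your proof is correct and follows the route the paper itself implicitly suggests: Theorem \ref{Thm:Min_solutions} gives $1-f(t) = 1-\varphi(h(t)t^\alpha)$, Theorem \ref{Prop:essential_uniqueness_and_reg_var}(b) gives slow variation of $L(u)=(1-\varphi(u))/u$ at $0$, and the Uniform Convergence Theorem, combined with the boundedness of $h$ that you correctly extract from $r$-periodicity together with the monotonicity of $t\mapsto h(t)t^\alpha$, completes the verification that $D_\alpha(t)/h(t)=L(h(t)t^\alpha)$ is slowly varying. The paper does not display a proof of this ``tangential'' theorem, but your bookkeeping agrees with the decomposition $f=\varphi(h(\cdot)\,\cdot^\alpha)$ that the paper discusses immediately after stating its main results.
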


It is often possible to say more about the form of the slowly varying
functions. We omit the details but give an indication of the results.
When $\alpha\ne1$, Theorem~\ref{Thm:Min_solutions} gives that any
solution $f\in\SolM$ is of the form $f(t)=\varphi(h(t)t^{\alpha})$
for some $h\in\mathfrak{H}_{r}$, where $\varphi$ comes from \eqref
{eq:E}. Then $t^{-1}(1-\varphi(t))$, which is $D(t)$,
is slowly varying in very specific ways under fairly mild moment
conditions. Roughly speaking, if $m'(1)<0$, then $D(t)$ usually
converges to a finite constant while, if $m'(1)=0$, $D(t)$ usually
looks like $-\log t$. See~\cite{Big1977}, Lemmas~2 and~4, \cite
{Lyo1997} and~\cite{CR2003} for information on the first case and
\cite{BK2005}, Theorems~4 and~5, for information on the second.
It is easy to translate such results on~$\varphi$ to corresponding
results on asymptotic behavior of $f \in\SolM$ at $0$.

We finish this section with a brief summary of previous results on
fixed points of the smoothing transform and the corresponding inf-type
distributional equation. Theorem~\ref{Thm:Min_solutions} has only one
real predecessor, namely, Theorem~4.2 in~\cite{AR2008}, where the
inf-type equation is solved in the case of a deterministic sequence
$(T_1,T_2,\ldots)$. There are (to our knowledge) two further papers
dealing with the inf-type equation:~\cite{JR2004,AM2009}. The first
paper, formulated in terms of the corresponding sup-type equation,
provides a full description of the set of solutions only in very
special cases, while the second one presents an approach that leads to
all solutions only within subclasses of sufficiently regular distributions.
Much more was known about the solutions to~\eqref{eq:FE1} within the
set of Laplace transforms.
For the case $\alpha< 1$, Theorem 1.4 in~\cite{Liu1998} is the result
which gave a full description of the set of solutions under the weakest
conditions so far. However, beyond the conditions required in our
Corollary~\ref{Cor:Sum_solutions}, Liu assumed that $\E N^{1+\delta}
< \infty$ and $\E(\sum_{i \geq1} T_i)^{1+\delta}< \infty$ for
some $\delta> 0$.
Iksanov~\cite{Iks2004}, Proposition 3, gave a description of the set of
solutions under the condition of existence of a so-called elementary
fixed point. However, due to an error in the proof for the case $\alpha
<1$ (personal communication), he later reduced his result to fixed points
within the subclass of Laplace transforms $\phi$ such that $1-\phi$
is regularly varying at the origin (see~\cite{Iks2007}).
In the case $\alpha= 1$, more was already known.
Theorem 3 in~\cite{BK2005} is basically our Corollary~\ref{Cor:Sum_solutions}
under the assumptions~\eqref{eq:A4b} and $\Prob(N < \infty)=1$.
The first complete description of $\SolL$ in the case of the existence
of an integrable solution $W$ to~\eqref{eq:SumFP} together with a
criterion for the occurrence of the latter is due to Iksanov~\cite{Iks2004}, Proposition 3(a) and (c).

The rest of this paper is organised as follows. In Section \ref
{sec:simple_inclusions}, we prove the simple inclusions in Theorem \ref
{Thm:Min_solutions} and Corollary~\ref{Cor:Sum_solutions}. Sections
\ref{sec:WBR}--\ref{sec:Proof_disintegration} are dedicated to the
proof of the converse direction of these two results. As indicated in
the introduction, iteration of~\eqref{eq:FE1} naturally leads to a
branching model (variously known as weighted branching, branching
random walk and multiplicative cascade)\vadjust{\goodbreak} which we formally define in
Section~\ref{sec:WBM}.
Section~\ref{sec:end_FP} is devoted to the property of endogeny,
mentioned earlier and first introduced in~\cite{AB2005}.
Section~\ref{sec:WBR} collects some (known) connections between the
branching model and random walk theory. The key object derived from
solutions to~\eqref{eq:FE1}, called their disintegration, is described
in Section~\ref{sec:disintegration}. With the help of this notion we
are able to formulate a further result (Theorem \ref
{Thm:Disintegration}) from which the proofs of Theorem \ref
{Thm:Min_solutions} and Corollary~\ref{Cor:Sum_solutions} are easily
completed. Section~\ref{sec:GBP} contains auxiliary results from the
theory of general (CMJ) branching processes. The assertions on slow
variation of $D$ and of $D_\alpha$ are then proved in Sections \ref
{sec:unique} and~\ref{sec:Reg_var_in_0}, respectively. Based
on these results, we prove Theorem~\ref{Thm:essential_uniqueness_and_reg_var}
(Section~\ref{sec:unique}) and Theorem~\ref{Thm:Disintegration}
(Section~\ref{sec:Proof_disintegration}). The final section briefly
addresses the possibility of nonmonotonic solutions to~\eqref{eq:FE1}.

%s4 ###
\section{The simple inclusions} \label{sec:simple_inclusions}

\begin{Lemma} \label{Lem:simple_inclusions}
Let~\eqref{eq:A1}--\eqref{eq:A3} and~\eqref{eq:E} hold.
Then $f \in\SolM$ for any $f$ which is defined by \eqref
{eq:F_inf=W_Lambda_alpha}. If, moreover, $\alpha\leq1$ and the
parameter function $h$ in~\eqref{eq:F_inf=W_Lambda_alpha} is chosen
from $\mathfrak{P}_r$, then $f \in\SolL$.
\end{Lemma}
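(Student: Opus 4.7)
The plan is direct verification: given $W$ and $h$ as in the statement, put $f(t) := \E\exp(-Wh(t)t^\alpha)$ and check in turn that $f$ belongs to the class $\mathcal{M}$ and then satisfies \eqref{eq:FE1}. Writing $g(t) := h(t)t^\alpha$, the defining property of $\mathfrak{H}_r$ says $g$ is non-decreasing, so $f$ is automatically decreasing into $[0,1]$ with $f(0)=1$. For $f(0+)=1$, multiplicative $r$-periodicity of $h$ gives $g(r^{-n}t_0)=r^{-n\alpha}g(t_0)\to 0$ for any fixed $t_0$, and monotonicity of $g$ then forces $g(t)\to 0$ as $t\downarrow 0$ (the non-geometric case $r=1$, where $h$ is constant, is trivial); dominated convergence finishes this check. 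The condition $f(t)\in(0,1)$ for some $t$ is inherited from non-triviality of $W$, since otherwise $f\equiv 1$ is excluded only by this last requirement.

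The substantive step is \eqref{eq:FE1}. Factors with $T_i=0$ contribute $f(0)=1$, so conditioning on $T$ and using the independence of the i.i.d.\ copies $(W_i)$ of $W$ gives
\begin{equation*}
\E\prod_{i \geq 1} f(tT_i)
~=~ \E\exp\!\Bigl(-\sum_{i:\,T_i>0} W_i\, h(tT_i)\,(tT_i)^\alpha\Bigr).
\end{equation*}
The key observation is that whenever $T_i>0$, multiplicative $r$-periodicity of $h$ yields $h(tT_i)=h(t)$ --- in the lattice case because $T_i\in r^{\Z}$, and in the non-geometric case because $h$ is constant. Pulling $h(t)t^\alpha$ out of the sum reduces the right-hand side to $\E\exp(-h(t)t^\alpha\sum_i T_i^\alpha W_i)$, and invoking \eqref{eq:sm-fe} identifies this with $\E\exp(-h(t)t^\alpha W)=f(t)$.

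For the second assertion, assume $\alpha\leq 1$ and $h\in\mathfrak{P}_r$. Then $g(t)=h(t)t^\alpha$ has completely monotone derivative, and by the argument above satisfies $g(0+)=0$, so $g$ is a Bernstein function vanishing at the origin. Hence for every fixed $c\geq 0$, $t\mapsto\exp(-c\,g(t))$ is the Laplace transform of a probability measure on $[0,\infty)$, and $f(t)=\E\exp(-Wg(t))$ is a $W$-mixture of such transforms; Fubini identifies $f$ itself as the Laplace transform of the mixed measure, placing $f\in\SolL$.

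The only genuine obstacle is the periodicity identity $h(tT_i)=h(t)$ for $T_i>0$; all other steps amount to bookkeeping together with standard facts about Bernstein functions and mixtures of Laplace transforms. This identity is precisely where the structure of $\mathfrak{H}_r$ engages with the lattice support of $T$ in the $r$-geometric regime, and it explains why the periodic factor $h(t)$ does not destroy the fixed-point property that is built into $W$ via \eqref{eq:sm-fe}.
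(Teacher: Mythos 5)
Your proof is correct and follows essentially the same approach as the paper: both arguments hinge on the identity $h(tT_i)=h(t)$ (valid a.s.\ because $h$ is constant when $r=1$, and because $T_i\in r^{\Z}$ on $\{T_i>0\}$ otherwise), then condition on $T$, use independence of the $W_i$, and invoke the fixed-point property \eqref{eq:sm-fe}. You supply more detail than the paper on the routine verification that $f\in\mathcal{M}$ (the $g(r^{-n}t_0)\to 0$ argument for $f(0+)=1$), and for the $\SolL$ claim you factor through Bernstein functions and mixtures where the paper cites Feller's composition criterion directly --- these are the same underlying facts packaged slightly differently.
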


\begin{pf}
Since $W$ satisfies~\eqref{eq:sm-fe} and $h(t) = h(tT_i)$ a.s.\ for $h
\in\mathfrak{H}_r$,
\begin{eqnarray*}
f(t)
&=&
\varphi(h(t) t^\alpha)
=
\E\exp(- W h(t) t^\alpha) \\
&=& \E\exp\biggl(-\sum_{i \geq1} T_i^{\alpha} W_i h(t) t^\alpha\biggr) \\
& =&
\E \biggl(\E \biggl[ \prod_{i \geq1} \exp(-W_i h(t T_i) (tT_i)^\alpha) | T \biggr]
\biggr) \\
&=& \E\prod_{i \geq1} \varphi(h(tT_i) (tT_i)^{\alpha}) =
\E\prod_{i \geq1} f(t T_i).
\end{eqnarray*}
Therefore, $f$ solves the functional equation. Then it is easily
verified that $f \in\SolM$. Now, moreover, suppose that $h \in
\mathfrak{P}_r$. Then $f(t) = \varphi(h(t)t^{\alpha}) \in\mathcal
{L}$ by~\cite{Fel1971}, Criterion 2 on page 441, and Bernstein's theorem.
\end{pf}

%s5 ###
\section{The associated branching model} \label{sec:WBM}

A key tool for the further analysis of equation~\eqref{eq:FE1} is an
associated
weighted branching model (or multiplicative cascade, or branching
random walk) which arises upon iteration of~\eqref{eq:FE1} and which
we now describe.

Let $\V:= \bigcup_{n \in\N_0} \N^n$ be the infinite Ulam--Harris
tree, where $\N:= \{1,2,\ldots\}$ and $\N^0 = \{\varnothing\}$.
Abbreviate $v = (v_1,\ldots,v_n)$ by $v_1 \ldots v_n$ and write $v|k$
for the restriction of $v$ to the first $k$ entries, that is, $v|k :=
v_1 \ldots v_k$, $k \leq n$. If $k > n$, put $v|k := v$. Write $vw$ for
the vertex $v_1 \ldots v_n w_1 \ldots w_m$ where $w = w_1 \ldots w_m$.
In this situation, we say that $v$ is an ancestor of $vw$. The length
of a node~$v$ is denoted by $|v|$, thus $|v|=n$ iff $v\in\N^{n}$.
Next, let $\bT:= (T(v))_{v \in\V}$ denote a~family of i.i.d.\ copies\vadjust{\goodbreak}
of $T$, where $T(\varnothing) = T = (T_i)_{i \geq1}$. We interpret
$T_i(v)$ as a weight attached to the edge $(v,vi)$ in the infinite tree
$\V$ and then define $L(\varnothing) := 1$ and, recursively,
$L(vi) := L(v) T_i(v)$ for $v \in\V$ and $i \in\N$. Thus~$L(v)$ is
the product of the weights along the unique path from the root
$\varnothing$ to $v$.
With this branching model, $n$fold iteration of~\eqref{eq:FE1} gives
\begin{equation} \label{eq:FEn}
f(t) = \E\prod_{|v|=n} f(tL(v))\qquad  (t \geq0).
\end{equation}
For $n\in\N_0$, let $\A_n$ denote the $\sigma$-algebra generated by
the sequences $T(v)$, $|v|<n$ and put $\A_{\infty}:= \sigma(\A_n\dvtx n
\geq0)=\sigma(T(v)\dvtx v\in\V)$.
For $\theta\geq0$, define
\begin{equation} \label{eq:W^gamma_n}
W^{(\theta)}_n := \sum_{|v|=n} L(v)^{\theta},\qquad  n \geq0.
\end{equation}
Then $N_n := W^{(0)}_{n} = \sum_{|v|=n} \1_{\{L(v)>0\}}$ counts the
positive branch weights in generation $n$. If $N = N_1 < \infty$ a.s.,
then $(N_n)_{n \geq0}$ forms an ordinary Galton--Watson process with
offspring distribution $\Prob(N \in\cdot)$.
Assuming~\eqref{eq:A3}, and thus $m(\alpha) = 1$, the sequence
$(W_n^{(\alpha)})_{n \geq0}$ is a nonnegative martingale with respect
to $(\A_n)_{n \geq0}$ and hence converges a.s.\
to $W^{(\alpha)}:= \lim_{n \to\infty} W_n^{(\alpha)}$, which satisfies
$\E W^{(\alpha)} \leq 1$ by Fatou's lemma.
Let $\varphi_{\alpha}$ denote its Laplace transform. The martingale
has been studied, in several disguises, by numerous authors. Further
information on $W^{(\alpha)}$ will be given in the next section.

Let us further introduce the shift operators $[\cdot]_u$, $u \in\V$.
Given any function $\Psi=\psi(\bT)$ of the weight family $\bT
=(T(v))_{v \in\V}$ pertaining to $\V$, define
\[
[\Psi]_u := \psi((T(uv))_{v \in\V} )
\]
to be the very same function but for the weights pertaining to the
subtree rooted at $u \in\V$.
Any branch weight $L(v)$ can be viewed as such a function, and thus we have
$[L(v)]_u = T_{v_1}(u) \cdot\,\cdots\,\cdot T_{v_n}(uv_1 \ldots v_{n-1})$
if $v = v_1 \ldots v_n$, that is, $[L(v)]_u = L(uv)/L(u)$ whenever $L(u)>0$.

%s6 ###
\section{Endogeny and the smoothing transformation} \label{sec:end_FP}

For our purposes, the relevance of the martingale limit $W^{(\alpha)}$,
defined through~\eqref{eq:W^gamma_n}, with $\alpha$ given by $\eqref
{eq:A3}$, stems from the fact that $W^{(\alpha)}$, unless it is zero
a.s., provides a~$W$ in~\eqref{eq:E} and thus a possible mixing
variable in our main results. In the following we will dwell upon an
additional property associated with $W^{(\alpha)}$, that of endogeny,
introduced by Aldous and Bandyopadhyay~\cite{AB2005}, Definition 7.
This term applies to what they call
a recursive tree process (RTP). Specializing~\cite{AB2005}, equation\
(8), to the situation of equation~\eqref{eq:sm-fe}, suppose
there are random variables $W_u,$ $u \in\V$ with
\begin{equation} \label{endog1}
W_u = \sum_{i \geq1} T_i(u)^{\alpha} W_{ui}\qquad  \mbox{a.s.}
\end{equation}
and that, independent of the first $n-1$ generations in the branching
process, the $\{W_u, |u|=n\}$ are i.i.d. Then $\{W_u\dvtx u \in\V\}$ is an
RTP. Its definition involves the family\vadjust{\goodbreak} $\bT= (T(u))_{u \in\V}$,
sometimes called an \emph{innovation process} in this context.
Note that~\eqref{endog1} implies that
\begin{equation} \label{eq:end_FP}
W_\varnothing= \sum_{|v|=n} L(v)^{\alpha} W_{v} \qquad \mbox{a.s.}
\end{equation}
for all $n \geq0$.
An RTP is called invariant if the $W_u$, $u \in\V$ are identically
distributed.
By Lemma 6 in~\cite{AB2005}, for any distribution $P$ satisfying the
distributional recursion~\eqref{eq:sm-fe} there is an invariant RTP
with marginal distribution~$P$, that is, an RTP $\{W_u\dvtx u \in\V\}$
with $W_u$ having distribution $P$ for all~$u$.\looseness=-1

\begin{Def} \label{Def:endogenous_RTP}
An invariant RTP is called endogenous if there exists a measurable
function $g\dvtx [0,\infty)^{\V} \to[0,\infty]$ such that
$W_\varnothing= g(\bT)$.
An RTP will be called null when $W_u=0$ a.s.---a null RTP is endogenous.
\end{Def}

It is well known that
\[
\bigl[W^{(\alpha)}\bigr]_u = \sum_{i \geq1} T_i(u)^{\alpha} \bigl[W^{(\alpha
)}\bigr]_{ui} \qquad \mbox{a.s.},
\]
and, since $W^{(\alpha)}$ is $\bT$-measurable, this is an endogenous
RTP for equation~\eqref{eq:sm-fe}---but it is interesting only when
not null. Lyons~\cite{Lyo1997} showed that [under \eqref
{eq:A1}--\eqref{eq:A3}] condition~\eqref{eq:A4a} is sufficient for
$W^{(\alpha)}$ to be nondegenerate at $0$. The complete
characterization of the nondegeneracy of $W^{(\alpha)}$ is due to
Iksanov~\cite{Iks2004}. A detailed proof can be found in \cite
{AI2009}. Therefore, under~\eqref{eq:A1}--\eqref{eq:A4}, $W^{(\alpha
)}$ can only be degenerate if~\eqref{eq:A4a} fails and, thus, \eqref
{eq:A4b} holds.\looseness=-1

Even if $W^{(\alpha)} = 0$ a.s., so that the martingale generates a
null RTP, there may be nonnull endogenous RTP. Under suitable
conditions, the limit of the Seneta--Heyde normed version of
$W^{(\alpha)}_n$ (see~\cite{BK1997} and~\cite{HS2009}) will give a
nonnull endogenous RTP. Furthermore, if $m'(\alpha)=0$, under
additional moment conditions, the so-called derivative martingale
converges a.s.\ to a nondegenerate random variable $\partial W^{(\alpha
)}$ which again gives a nonnull RTP; see~\cite{BK2005}, page~623f. In
fact, we will show that under~\eqref{eq:A1}--\eqref{eq:E} there is
always a~nonnull endogenous RTP.

\begin{Theorem} \label{Thm:endogeny}
Assuming~\eqref{eq:A1}--\eqref{eq:E}, the following assertions hold true:

\begin{longlist}[(a)]
\item[(a)]
There exists a nonnull endogenous RTP $\{W_u\dvtx u \in\V\}$ for equation~\eqref{eq:sm-fe}, namely,
\begin{equation} \label{eq:form_of_the_RTP}
W_\varnothing = \lim_{n \to\infty} \sum_{|v|=n}
\bigl(1-\varphi(L(v))\bigr) \qquad \mbox{a.s., }
W_u = [W_{\varnothing}]_u, u \in\V.
\end{equation}
Any other nonnegative invariant RTP for equation~\eqref{eq:sm-fe} is a
scale multiple of this one.
\item[(b)]
Unless $\alpha=1$, there is no nonnull endogenous RTP for equation
\eqref{eq:SumFP}.
\end{longlist}
\end{Theorem}

This theorem solves Open Problem 18 in~\cite{AB2005}: part (a) extends
Corollary~17 in~\cite{AB2005} by imposing weaker moment conditions and
also dealing with\vadjust{\goodbreak} the case \mbox{$m'(\alpha)=0$} [corresponding to $\rho'(1)
= 0$ there], while part (b) states that any endogenous RTP for \eqref
{eq:SumFP} must be null and thus trivial if $\alpha< 1$. Similar
assertions concerning endogeny for two-sided solutions to \eqref
{eq:SumFP} and~\eqref{eq:sm-fe} can be found in~\cite{AM2010b}, Section~4.8.
We postpone the proof of this result until the end of Section \ref
{sec:Proof_disintegration}.
Some partial results relating to Theorem~\ref{Thm:endogeny} that we
need will now be given as propositions.

\begin{Prop} \label{Prop:integrable_endogenous_FP}
Suppose that~\eqref{eq:A1}--\eqref{eq:A3} and~\eqref{eq:E} hold,
with associated RTP $\{W_u\dvtx u \in\V\}$,
and suppose that $\E W=c \in(0,\infty)$. Then $\E W^{(\alpha)}=1$
and $\{W_u\dvtx u \in\V\}$ is endogenous and given by $W_v = c[W^{(\alpha
)}]_v$ a.s. for all $v \in\V$. If, furthermore,~\eqref{eq:A4} holds
true, then condition~\eqref{eq:A4a} is satisfied.
\end{Prop}
\begin{pf}
By~\eqref{eq:end_FP}, the integrability of $W$ and the martingale
convergence theorem,
\begin{eqnarray*}
\E(W_\varnothing| \A_\infty)
& = &
\lim_{n \to\infty} \E(W_\varnothing | \A_n)
= \lim_{n \to\infty} \E\biggl(\sum_{|v|=n} L(v)^{\alpha} W_v | \A_n \biggr)
\\
& = &
c \lim_{n \to\infty} W_n^{(\alpha)}
= c W^{(\alpha)} \qquad \mbox{a.s.},
\end{eqnarray*}
and taking expectations shows that $\E W^{(\alpha)}=1$. Now, for
arbitrary $n \in\N$,
\[
W_\varnothing-cW^{(\alpha)}= \sum_{|v|=n} L(v)^{\alpha}
\bigl(W_{v}-c\bigl[W^{(\alpha)}\bigr]_v \bigr)
\]
and the method of proof in
\cite{AB2005}, Corollary 17, shows $W_v=c[W^{(\alpha)}]_v$ a.s. for
all $v$, which proves the first part of the proposition.

Now suppose additionally that~\eqref{eq:A4} holds true. Let $S_1$ have
the distribution
\begin{equation}\label{eq:mu-alpha}
\Prob(S_1 \in B) := \mu_{\alpha}(B) := \E\sum_{i=1}^N
T_{i}^{\alpha}\1_{B}(-\log T_{i})
\end{equation}
for Borel subsets $B$ of $\R$. Note that, by~\eqref{eq:A4}, the
definition of $\alpha$, and of $m'(\alpha)$,
$\E S_1 =m'(\alpha) \in[0,\infty)$. Now~\cite{Lyo1997} implies
\eqref{eq:A4a} holds.
\end{pf}

We finish the section with a uniqueness result that sharpens Theorem
\ref{Thm:essential_uniqueness_and_reg_var} in the case of endogenous
RTP. Note that, in contrast to Proposition \ref
{Prop:integrable_endogenous_FP}, here it is assumed that the RTP is endogenous.

\begin{Prop} \label{Prop:uniqueness_of_end_FP}
Assume~\eqref{eq:A1}--\eqref{eq:E}.
Let $\widehat{W}$ be another nonnegative variable, but with Laplace
transform $\widehat{\varphi}$, satisfying~\eqref{eq:E}. Suppose
there are corresponding endogenous RTPs $\{W_u\dvtx u \in\V\}$ and $\{
\widehat{W}_u\dvtx u \in\V\}$ with respect to the same innovations
process $\bT$. Then $W_\varnothing= c\widehat{W}_\varnothing$ a.s.\
for some $c > 0$.
\end{Prop}

\begin{pf}
By Theorem~\ref{Thm:essential_uniqueness_and_reg_var}, we already know
that $\varphi(t)=\widehat{\varphi}(ct)$, and it is no loss of
generality to assume $c=1$. Using endogeny, the bounded and thus\vadjust{\goodbreak}
integrable random variable $\exp(-W_\varnothing)$ can be written in
the form
\begin{eqnarray*}
\exp(-W_\varnothing) &=& \E ( \exp(-W_\varnothing) | \A_\infty
)\\
&=& \lim_{n \to\infty} \E\biggl ( \exp \biggl(- \sum_{|v|=n} L(v)^{\alpha}
W_v \biggr)
\Big| \A_n \biggr) \\
&=& \lim_{n \to\infty} \prod_{|v|=n} \varphi(L(v)^{\alpha}) \qquad \mbox{a.s.},
\end{eqnarray*}
and a similar result holds for $\exp(-\widehat{W}_\varnothing)$ with
$\widehat{\varphi}$ instead of $\varphi$ on the right-hand side. Now
$\varphi= \widehat{\varphi}$ implies $\exp(-W_\varnothing) = \exp
(-\widehat{W}_\varnothing)$ a.s.
\end{pf}

This result is first used in the proof of Theorem \ref
{Thm:Disintegration} in Section~\ref{sec:Proof_disintegration}. The
only ingredient to the proof of the previous result which has not yet
been verified is Theorem~\ref{Thm:essential_uniqueness_and_reg_var},
and that will be proved in Section~\ref{sec:unique}, so there is no
circularity in the argument.

%s7 ###
\section{Renewal arguments} \label{sec:WBR}

Let $(S_n)_{n \geq0}$ denote a zero-delayed random walk with increment
distribution $\mu_{\alpha}$ introduced at~\eqref{eq:mu-alpha}.
Let $S(v) := -\log L(v)$ ($v \in\V$) where $-\log0 := \infty$.
It is then easily verified (see~\cite{BK1997}, Lemma 4.1) that
\begin{equation} \label{eq:connection}
\Prob(S_n \in\cdot) = \mu_{\alpha}^{*n} = \E\sum_{|v|=n}
e^{-\alpha S(v)} \delta_{S(v)}\qquad (n \in\N_0).
\end{equation}
Importantly, this connection between the branching model
and its associated random walk is preserved under certain stopping schemes.
To make this precise in the present context, let $\sigma\dvtx  \R^{\N_0}
\to\N_{0} \cup\{\infty\}$ denote a formal stopping rule, that is,
\[
\sigma((s_n)_{n \geq0}) = \inf\{n \geq0\dvtx  (s_0,\ldots,s_n) \in B_n\}
\]
where $B_n$ is a Borel subset of $\R^{n+1}$, $n \geq0$. For $n \in\N
_0$, let $\sigma_n$ denote the $n$th consecutive application of
$\sigma$, which means that $\sigma_0 := 0$ and
\[
\sigma_n := \inf\{k > \sigma_{n-1}\dvtx  (0, s_{\sigma_{n-1} + 1} -
s_{\sigma_{n-1}},\ldots, s_k-s_{\sigma_{n-1}})\in B_{k-\sigma_{n-1}}\}
\]
for $n \in\N$. Then, for any $x = (v_i)_{i \geq1} \in\N^{\N} =:
\partial V$, the boundary of the Ulam--Harris tree $\V$, we can apply
these formal stopping rules to the random walk along the infinite path
$\varnothing\to v_1 \to v_1 v_2 \to\cdots$ from the root to the boundary
of $\V$; that is, we can consider $\sigma_n((S(x|k))_{k \geq0})$, $n
\in\N_0$. The set of all vertices in $\V$ in which $\sigma_n$ stops
any random walk from the root to the boundary of~$\V$ is denoted by
$\mathcal{T}_{\sigma_n}$, that is,
\[
\mathcal{T}_{\sigma_n} := \{x|\sigma_n((S(x|k))_{k \geq0})\dvtx  x \in
\partial\V\}.
\]
We refer to the (random) sets $\mathcal{T}_{\sigma_n}$ as \emph
{homogeneous stopping lines} (\textit{HSLs}). This notion indicates that the
above defined random sets are special optional lines in the sense of\vadjust{\goodbreak}
Jagers~\cite{Jag1989}, Kyprianou~\cite{Kyp2000} and Biggins and
Kyprianou~\cite{BK2004}, but where, additionally, stopping along any
path of the infinite tree $\V$ follows the same stopping rule.
By some obvious changes in the proof of Lemma 3.2 in~\cite{AM2008}, we
deduce that
\begin{equation} \label{eq:embedded_connection}
\E\sum_{v \in\mathcal{T}_{\sigma_n}} e^{- \alpha S(v)} \delta_{S(v)}
=
\Prob(S_{\sigma_n} \in\cdot, \sigma_n < \infty)
=: (\mu_{\alpha}^{\sigma})^{*n},
\end{equation}
where in slight abuse of notation we write $\sigma_n$ instead of
$\sigma_n((S_{k})_{k \geq0})$. We have thus established the analogue
of~\eqref{eq:connection} for the embedded branching model based upon
$(\sigma_{n})_{n\ge0}$. Here we make use of the HSLs associated with
the first ascending ladder epoch defined by $\sigma^{\st}:=\inf\{k
\geq0\dvtx  s_k > 0\}$. When applied to $(S_n)_{n\ge0}$, this ladder epoch
will again be denoted by $\sigma^{\st}$, whereas~$\mu_{\alpha
}^{\sigma^{\st}}$ will be abbreviated to $\mu_{\alpha}^{\st}$.

\begin{Lemma} \label{Lem:m}
If~\eqref{eq:A1}--\eqref{eq:A4} hold, then $\limsup_{n \to\infty}
S_n = \infty$ a.s.\ and $\sigma^{\st} < \infty$ a.s.
\end{Lemma}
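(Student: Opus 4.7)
The plan is to reduce the claim to the classical random-walk dichotomy by showing $\E S_1 \in [0,\infty)$; once that is in hand, $S_n$ cannot drift to $-\infty$ and the conclusion follows from the SLLN or Chung--Fuchs. From the definition of $\mu_\alpha$, $\E S_1 = -\E\sum_{i\ge 1} T_i^\alpha \log T_i$, which formally equals $-m'(\alpha)$.

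Under \eqref{eq:A4a} this is immediate: $m'(\alpha) \in (-\infty,0)$, so $\E S_1 \in (0,\infty)$, and the SLLN gives $S_n\to\infty$ a.s., hence $\limsup S_n = \infty$ a.s. Under only \eqref{eq:A4b}, I would exploit convexity. Fix $\theta \in [0,\alpha)$ with $m(\theta) < \infty$; then $m$ is convex and finite on $[\theta,\alpha]$ with $m(\alpha) = 1 < m(\theta)$, so the left derivative $m'(\alpha^-)$ exists and lies in $(-\infty,0]$: it is bounded below by the secant slope $(1-m(\theta))/(\alpha-\theta)$ and is non-positive since $m \geq m(\alpha)$ on $[\theta,\alpha]$.

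The main technical step is to identify $\E S_1$ with $-m'(\alpha^-)$ under \eqref{eq:A4b}. I would let $\beta \uparrow \alpha$ in the identity
\[
\frac{m(\beta)-1}{\alpha-\beta} \;=\; \E \sum_{i\ge 1} T_i^\alpha \,\frac{T_i^{\beta-\alpha}-1}{\alpha-\beta},
\]
using that each summand converges monotonically (decreasingly) to $-T_i^\alpha \log T_i$, which follows from the monotonicity in $\gamma$ of $\gamma\mapsto (e^{c\gamma}-1)/\gamma$ applied with $c = -\log T_i$. Splitting the sum according to $\{T_i\le 1\}$ versus $\{T_i>1\}$, on the former the summands are non-negative and dominated (at $\beta=\theta$) by $T_i^\theta/(\alpha-\theta)$, which has finite mean $m(\theta)/(\alpha-\theta)$ by \eqref{eq:A4b}, so DCT applies; on the latter the summands are non-positive with absolute values increasing to $T_i^\alpha \log T_i$, so MCT applies. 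Finiteness of the overall limit $-m'(\alpha^-)$ then forces the $\{T_i>1\}$ contribution to be integrable, yielding $\E S_1 = -m'(\alpha^-) \in [0,\infty)$.

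With $\E S_1 \in [0,\infty)$ established, the conclusion follows from classical random-walk theory: if $\E S_1 > 0$, the SLLN gives $S_n \to \infty$ a.s.; if $\E S_1 = 0$, then $\mu_\alpha$ is non-degenerate by \eqref{eq:A1} (since $\mu_\alpha = \delta_0$ would force $T \in \{0,1\}^{\N}$ a.s.), and Chung--Fuchs yields $\limsup S_n = \infty$ a.s. Either way, $\limsup_{n\to\infty} S_n = \infty$ a.s. The main obstacle is the integrability bookkeeping required to exchange limit and expectation in the difference-quotient computation under only \eqref{eq:A4b}; the random-walk dichotomy itself is textbook.
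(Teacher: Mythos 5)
Your proposal is correct and follows the same route as the paper: establish $\E S_1 \in [0,\infty)$ under \eqref{eq:A4} and then invoke the standard random-walk dichotomy (SLLN when the mean is positive, Chung--Fuchs when it is zero). The paper's proof is essentially a one-line assertion of the same facts, whereas you fill in the supporting integrability bookkeeping (the DCT/MCT identification of $\E S_1$ with $-m'(\alpha^-)$ under \eqref{eq:A4b}, and the non-degeneracy of $\mu_\alpha$ via \eqref{eq:A1}) that the paper leaves implicit.
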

\begin{pf}
Under~\eqref{eq:A4} $\E S_1 \geq0$ and the result follows from
standard random walk theory.
\end{pf}

\begin{Lemma} \label{Lem:first_ladder_height_moment}
If~\eqref{eq:A1}--\eqref{eq:A3} and~\eqref{eq:A4a} hold, then $\E
S_{\sigma^{\st}} < \infty$.
\end{Lemma}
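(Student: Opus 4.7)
My plan is to reduce the lemma to standard random walk theory by exploiting the positive drift built into \eqref{eq:A4a}. The first step is to verify that under \eqref{eq:A4a} the step distribution $\mu_\alpha$ satisfies $\E|S_1| < \infty$ and $\E S_1 > 0$: the signed quantity $m'(\alpha) = \E \sum_{i} T_i^\alpha \log T_i$ lies in $(-\infty, 0)$, so for this expectation to be a finite real number both its positive and negative parts must be finite, whence $\E|S_1| = \E\sum_{i} T_i^\alpha |\log T_i| < \infty$ and $\E S_1 = -m'(\alpha) \in (0,\infty)$.

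Given a random walk with finite positive mean, the strong law of large numbers delivers $S_n \to \infty$ a.s., so $\sigma^{>} < \infty$ a.s.\ (Lemma \ref{Lem:m} already yields $\limsup_{n \to \infty} S_n = \infty$, which is enough for the a.s.\ finiteness alone). For the moment assertion, I would invoke the classical theorem that any random walk with $\E S_1 \in (0,\infty)$ has a first strictly ascending ladder epoch with $\E \sigma^{>} < \infty$; see, for instance, Gut's monograph \emph{Stopped Random Walks}, Theorem III.1.2, or the original Kiefer--Wolfowitz result. With both $\E \sigma^{>}$ and $\E|S_1|$ finite, Wald's identity then closes the argument:
\[
\E S_{\sigma^{>}} ~=~ \E \sigma^{>} \cdot \E S_1 ~<~ \infty.
\]

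The only non-trivial ingredient here is the finiteness of $\E \sigma^{>}$ itself: a naive Chebyshev-type tail bound on $\Prob(\sigma^{>} > n) \leq \Prob(S_n \leq 0)$ fails to be summable in $n$ without a second-moment assumption, which is precisely why one needs the sharper positive-drift result cited above. Beyond that, the lemma amounts to bookkeeping.
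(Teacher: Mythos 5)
Your proof is correct and follows essentially the same route as the paper's: establish $\E S_1 \in (0,\infty)$ from \eqref{eq:A4a}, deduce a.s.\ finiteness of $\sigma^>$, invoke the classical positive-drift result to get $\E\sigma^{>}<\infty$, and close with Wald's identity. The paper compresses this to a terse appeal to ``standard random walk theory,'' while you spell out the provenance of the integrability of $\sigma^>$ and the reason a naive Chebyshev tail bound is insufficient; the underlying argument is identical.
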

\begin{pf}
The first part of~\eqref{eq:A4a} is equivalent to $\E S_1 \in
(0,\infty)$. Thus, from standard random walk theory, we infer
integrability of $\sigma^{\st}$ and then that $\E S_{\sigma^{\st}}
= \E\sigma^{\st} \E S_1 < \infty$ by Wald's equation.
\end{pf}

\begin{Lemma}[(c.f.~\cite{BK2005}, Theorem 10(c))] \label
{Lem:exponential_moments}
If~\eqref{eq:A1}--\eqref{eq:A3} hold, then, for any $0\le\theta\le
\alpha$,
\[
\E\sum_{v \in\mathcal{T}_{\sigma^{\st}}} L(v)^{\theta} < \infty
\quad \mbox{if, and only if,}\quad
\E\sum_{i \geq1} T_i^{\theta} < \infty.
\]
\end{Lemma}

\begin{pf}
Using~\eqref{eq:connection},~\eqref{eq:embedded_connection} and
$\Prob(\sigma^{\st} < \infty) = 1$, we infer that the result is
equivalent to the assertion
\[
\E e^{(\alpha-\theta)S_{\sigma^{\st}}} < \infty
\quad \mbox{if, and only if,}\quad
\E e^{(\alpha-\theta)S_1} < \infty,
\]
which in turn can be deduced from results in standard random walk
theory, see, for instance,~\cite{Fel1971}, Section XII.3.
\end{pf}

%s8 ###
\section{Disintegration} \label{sec:disintegration}

Our analysis of equation~\eqref{eq:FE1} will be built on a pathwise
counterpart of~\eqref{eq:FEn}. Let
\begin{equation} \label{eq:disintegrated}
M_n(t) := \prod_{|v|=n} f(t L(v)) ,\quad  n \geq0
\end{equation}
for $f \in\SolM$. Neveu~\cite{Nev1988} studied the \emph
{multiplicative martingales} $(M_n(t))_{n \geq0}$ in the context of
the KPP equation. More recently, they have been considered in the study\vadjust{\goodbreak}
of the functional equation of the smoothing transform \cite
{BK1997,BK2005}. We state the fact that $(M_n(t))_{n \geq0}$ is indeed
a martingale in the following lemma~\cite{BK1997}, Theorem~3.1.

\begin{Lemma}\label{Lem:Disintegration}
Let $f \in\SolM$ and $t \geq0$. Then $(M_n(t))_{n \geq0}$ forms a
bounded nonnegative martingale with respect to $(\A_n)_{n \geq0}$
and thus converges a.s.\ and in mean to a random variable $M(t)$ satisfying
\begin{equation} \label{eq:Disintegration_integrated}
\E M(t) = f(t).
\end{equation}
\end{Lemma}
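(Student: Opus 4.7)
The plan is to verify the three claims (bounded, non-negative martingale; a.s. and $\mathcal{L}^1$ convergence; $\E M(t)=f(t)$) by routine application of the functional equation \eqref{eq:FE1} together with the martingale convergence theorem, exploiting that $f$ takes values in $[0,1]$.

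First I would observe that since $f\in\mathcal{M}$ maps into $[0,1]$, each factor $f(tL(v))$ lies in $[0,1]$, so $0\le M_n(t)\le 1$ pointwise. Hence $(M_n(t))_{n\ge 0}$ is automatically uniformly bounded and $\mathcal{A}_n$-measurable (since $L(v)$ depends only on the weights along the path from $\varnothing$ to $v$, and these are encoded in $\mathcal{A}_n$ when $|v|=n$).

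Next I would check the martingale property. Decomposing along the $(n{+}1)$st generation gives
\begin{equation*}
M_{n+1}(t) ~=~ \prod_{|u|=n}\prod_{i\ge 1} f\bigl(tL(u)T_{i}(u)\bigr).
\end{equation*}
Conditionally on $\mathcal{A}_n$, the weight vectors $T(u)=(T_i(u))_{i\ge 1}$ for $|u|=n$ are i.i.d.\ copies of $T$, independent of $\mathcal{A}_n$, while each $L(u)$ is $\mathcal{A}_n$-measurable. Therefore
\begin{equation*}
\E\bigl[M_{n+1}(t)\,\big|\,\mathcal{A}_n\bigr]
~=~ \prod_{|u|=n}\E\Bigl[\prod_{i\ge 1} f\bigl(tL(u)T_i(u)\bigr)\,\Big|\,\mathcal{A}_n\Bigr]
~=~ \prod_{|u|=n} f\bigl(tL(u)\bigr) ~=~ M_n(t),
\end{equation*}
where the middle equality applies \eqref{eq:FE1} at the argument $tL(u)$, using that $f$ solves the functional equation and that $T(u)$ is independent of $\mathcal{A}_n$. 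This establishes the bounded non-negative martingale property.

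Finally, by the martingale convergence theorem, $M_n(t)\to M(t)$ almost surely for some $M(t)\in[0,1]$; since $M_n(t)\le 1$, dominated convergence upgrades this to convergence in $\mathcal{L}^1$. Noting that $M_0(t)=f(tL(\varnothing))=f(t)$ gives $\E M_n(t)=f(t)$ for every $n$, whence $\E M(t)=f(t)$. No step here is delicate; the only thing one needs to be careful about is that $L(u)$ is $\mathcal{A}_n$-measurable and that the families $(T(u))_{|u|=n}$ are i.i.d.\ and independent of $\mathcal{A}_n$, so that the conditional expectation factorises and the functional equation can be applied inside each factor.
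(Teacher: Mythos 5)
Your proof is correct and is exactly the standard argument; the paper itself does not reprove the lemma but cites it as Theorem 3.1 of Biggins and Kyprianou (1997), and your derivation — decomposing $M_{n+1}(t)$ over the $n$th generation, using that $L(u)$ is $\A_n$-measurable and $(T(u))_{|u|=n}$ are i.i.d.\ copies of $T$ independent of $\A_n$, then applying \eqref{eq:FE1} factorwise and invoking bounded martingale convergence — is precisely the route taken there. The only point worth flagging explicitly (though it does not affect correctness) is that the factorisation of the conditional expectation over the possibly infinite product $\prod_{|u|=n}$ uses conditional dominated convergence, which is justified since every factor lies in $[0,1]$.
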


In the situation of Lemma~\ref{Lem:Disintegration}, we call the
stochastic process $M=(M(t))_{t \geq0}$ the \emph{disintegration of
$f$} (w.r.t.\ $T$) and also a \emph{disintegrated fixed point}. By
Lemma~\ref{Lem:Disintegration}, we can calculate any solution to the
functional equation~\eqref{eq:FE1} from its associated disintegrated
fixed point.
\begin{Def} \label{Def:endogenous_FP}
We say that a random variable $W$ is an \emph{endogenous fixed point
w.r.t. $T^{(\alpha)}$} if $W$ is as in~\eqref{eq:E} and if there
exists an endogenous RTP $\{W_u\dvtx u \in\V\}$ such that $W =
W_{\varnothing}$.
\end{Def}

\begin{Theorem} \label{Thm:Disintegration}
If~\eqref{eq:A1}--\eqref{eq:E} hold, then for any $f \in\SolM$ with
disintegration $M$ there is a function $h \in\mathfrak{H}_r$ such that
\begin{equation} \label{eq:Disintegration}
M(t) = e^{- W h(t)t^{\alpha}} \qquad \mbox{a.s. }  (t \geq0)
\end{equation}
where $W$ is an endogenous fixed point w.r.t.\ $T^{(\alpha)}$.
\end{Theorem}

The proof of this theorem is postponed until Section \ref
{sec:Proof_disintegration}. The result is the first that provides a
full description of the set of disintegrations of the functions from
$\SolM$.
It is, as mentioned just after Corollary~\ref{Cor:Sum_solutions}, our
central result.
A similar result is implicit in the proof of Theorem 4.2 in \cite
{AM2009} but covers only disintegrations of sufficiently regular $f \in
\SolM$.
Theorem~\ref{Thm:Disintegration} has great impact on the analysis of
fixed points of inhomogeneous smoothing transforms~\cite{AM2010a}, Theorems 4.4
and~8.1, as well as of two-sided fixed points of the smoothing
transform~\cite{AM2010b}, Section~4.5 and Proposition 5.3.
Next we show how it allows us to complete the proofs of Theorem~\ref
{Thm:Min_solutions} and Corollary~\ref{Cor:Sum_solutions}.

\begin{pf*}{Proof of Theorem~\ref{Thm:Min_solutions}}
By Lemma~\ref{Lem:simple_inclusions}, we have $f \in\SolM$ for any~$f$
given by~\eqref{eq:F_inf=W_Lambda_alpha} and parametrized with $h
\in\mathfrak{H}_r$.
For the reverse inclusion, pick any $f \in\SolM$.
Theorem~\ref{Thm:Disintegration} shows the existence of an endogenous
fixed point $W$ w.r.t.\ $T^{(\alpha)}$ and an $h \in\mathfrak{H}_r$
such that the disintegration $M$ of~$f$ satisfies \eqref
{eq:Disintegration}. This in combination with \eqref
{eq:Disintegration_integrated} gives $f(t) = \varphi(h(t) t^{\alpha
})$ for $t > 0$, as required.
\end{pf*}

\begin{pf*}{Proof of Corollary~\ref{Cor:Sum_solutions}}
Let $\alpha\le1$. Again, Lemma~\ref{Lem:simple_inclusions} gives one
inclusion. For the reverse one, pick any $f \in\SolL$. As in the
proof of Theorem~\ref{Thm:Min_solutions}, we obtain $f(t) = \varphi
(h(t)t^{\alpha})$ a.s.\ ($t \geq0$) for some $h \in\mathfrak{H}_r$.
It remains to show that $h \in\mathfrak{P}_r$. To this end, it
suffices to show that $t \mapsto h(t)t^{\alpha}$ has a completely\vadjust{\goodbreak}
monotone derivative in the $r$-geometric case. Without loss of
generality, we assume $h(1) = 1$ and use the regular variation of
$1-\varphi$ (see Theorem~\ref{Thm:essential_uniqueness_and_reg_var})
to infer
\[
\frac{1-f(t r^{-n})}{1-f(r^{-n})}
% =
%n})}
=
\frac{1-\varphi(h(t)t^{\alpha} r^{-\alpha n})}{1-\varphi(r^{-\alpha n})}
\to
h(t) t^{\alpha}\qquad  (n \to\infty).
\]
Thus $t \mapsto h(t)t^{\alpha}$ is the limit of a sequence of
functions with completely monotone derivatives and therefore has a
completely monotone derivative itself.
\end{pf*}

\begin{pf*}{Proof of Corollary~\ref{Cor:inf-solutions}}
Let $g$ be the generating function of the family size $N$. From \eqref
{eq:MinFP},
$\Prob(X=\infty) = g(\Prob(X=\infty))$ and $\Prob(X>0) \leq
g(\Prob(X>0))$.
Since $X$ is nondegenerate $\Prob(X=\infty)<\Prob(X>0)\leq1$,
which implies that $\Prob(X>0) \geq g(\Prob(X>0))$. Consequently
$\Prob(X>0)$ is another a fixed point of~$g$ and so must equal one.
Thus the survival function $f(t)=\Prob(X\geq t)$ has $f(0+)=1$ and so
$ f \in\mathcal{M}$.
The result now follows from Theorem~\ref{Thm:Min_solutions}.
\end{pf*}

We finish this section with a series of results that will be useful in
the proof of Theorem~\ref{Thm:Disintegration}.

\begin{Lemma}[(see Lemma 5.2 in~\cite{AM2009})] \label{Lem:disintSFPE}
Let $f \in\SolM$ with disintegration~$M$. Then, for all $t \geq0$
and $n \in\N_0$, we have
\begin{equation} \label{eq:DisintegratedFPE}
M(t) = \prod_{|v|=n} [M]_v(t L(v))\qquad  \mbox{a.s.}
\end{equation}
\end{Lemma}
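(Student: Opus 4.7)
The goal is to show that the disintegration $M(t)$ satisfies the pathwise fixed-point identity $M(t) = \prod_{|v|=n} [M]_v(tL(v))$ almost surely. The guiding observation is that, for any $k \geq n$, the branch weights factor along the tree: if $v = uv'$ with $|u|=n$ and $|v'|=k-n$, then $L(v) = L(u)\,[L(v')]_u$. Substituting this into the defining product
\[
M_k(t) \;=\; \prod_{|v|=k} f(tL(v))
\]
and grouping vertices according to their ancestor at level $n$ yields the telescoping identity
\[
M_k(t) \;=\; \prod_{|u|=n} [M_{k-n}]_u(tL(u))
\qquad (k \geq n),
\]
where factors with $L(u)=0$ are understood to equal $1$ because $f(0)=1$.

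Next, I would let $k\to\infty$. The left-hand side tends a.s.\ to $M(t)$ by Lemma~\ref{Lem:Disintegration}. Each factor on the right, for fixed $u$ with $L(u)>0$, is the disintegration martingale evaluated at the (random, $\A_n$-measurable) argument $tL(u)$ in the subtree rooted at $u$; applying Lemma~\ref{Lem:Disintegration} in that subtree gives the a.s.\ limit $[M]_u(tL(u))$. Denote the candidate right-hand side by
\[
P_n(t) \;:=\; \prod_{|u|=n} [M]_u(tL(u)).
\]

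The main obstacle is that the number of factors at level $n$ may be infinite (when $N=\infty$ has positive probability), so one cannot simply interchange the limit with the product. To get around this, I would pass to logarithms and invoke Fatou's lemma in the form
\[
\sum_{|u|=n} \bigl(-\log [M]_u(tL(u))\bigr)
\;\leq\; \liminf_{k\to\infty}\sum_{|u|=n} \bigl(-\log [M_{k-n}]_u(tL(u))\bigr)
\;=\; -\log M(t) \qquad\text{a.s.},
\]
which yields the one-sided inequality $M(t) \leq P_n(t)$ a.s.

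Finally, I would upgrade this to equality by a simple first-moment argument. Conditional on $\A_n$, the random variables $[M]_u$, for distinct $u$ with $|u|=n$, are independent copies of $M$ that are independent of $\A_n$; since $\E[M(s)] = f(s)$ by Lemma~\ref{Lem:Disintegration}, one obtains
\[
\E\bigl(P_n(t)\,\bigm|\,\A_n\bigr) \;=\; \prod_{|u|=n} f(tL(u)) \;=\; M_n(t),
\]
so $\E P_n(t) = \E M_n(t) = f(t) = \E M(t)$. Combined with $P_n(t) \geq M(t)$ a.s., this forces $P_n(t) = M(t)$ a.s., which is the claim. The argument is plainly uniform in $t\geq 0$ and $n\in\N_0$.
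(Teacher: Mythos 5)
The paper does not prove Lemma \ref{Lem:disintSFPE} itself but cites it from [AM2009, Lemma 5.2], so a line-by-line comparison with ``the paper's proof'' is not available. Your argument, however, is a correct, self-contained proof and uses the natural approach. The key steps all check out: the branch-weight factorisation $L(uv')=L(u)\,[L(v')]_u$ gives the exact identity $M_k(t)=\prod_{|u|=n}[M_{k-n}]_u(tL(u))$ for $k\geq n$, including the degenerate case $L(u)=0$ where $f(0)=1$ makes all the inner factors equal to $1$; the $\A_n$-measurability of the arguments $tL(u)$ together with the independence of $[\bT]_u$ from $\A_n$ justifies the a.s.\ convergence $[M_{k-n}]_u(tL(u))\to[M]_u(tL(u))$; Fatou applied to the nonnegative summands $-\log[M_{k-n}]_u(tL(u))$ yields $P_n(t)\geq M(t)$ a.s.; and since, conditionally on $\A_n$, the $[M]_u$ are i.i.d.\ copies of $M$ (and the conditional expectation factors across the possibly infinite product by dominated convergence, each factor being $[0,1]$-valued), you obtain $\E(P_n(t)\,|\,\A_n)=M_n(t)$, hence $\E P_n(t)=f(t)=\E M(t)$. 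Together with the one-sided inequality this forces equality a.s. The ``Fatou plus matching first moments'' closing step is exactly the standard device for upgrading a pathwise inequality between nonnegative limits to an identity, and I have no reservations about the argument.
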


Lemma~\ref{Lem:disintSFPE} provides us with a quick proof of the fact
that $\SolM$ is contained in the set of solutions to the functional
equation~\eqref{eq:FE1} with the sequence $T$ replaced by the family
$(L(v))_{v \in\mathcal{T}}$, where $\mathcal{T}$ is an \emph{a.s.\
dissecting HSL}. The last notion was introduced in~\cite{Kyp2000} for
general stopping lines. For a HSL $\mathcal{T}$ it means that a.s.\
there exists a positive integer $n$ such that for any $v \in\N^n$
there is some $u \in\mathcal{T}$ satisfying $u = v|k$ for some $k <
|v|$. In other words, with probability one $\mathcal{T}$ cuts through
the tree prior to some (random) generation $n$.

\begin{Lemma} \label{Lem:HSL_FPE}
Let $f \in\SolM$ with disintegration $M$ and let $\mathcal{T}$
denote an a.s.\ dissecting HSL. Then
\[
% \label{eq:HSL_dis_FPE}
M(t) = \prod_{v \in\mathcal{T}} [M]_v(t L(v))
\qquad \mbox{a.s.}
\]
and thus
\[
% \label{eq:HSL_FPE}
f(t) = \E\prod_{v \in\mathcal{T}} f(t L(v))\qquad  (t \geq0).
\]
In particular, any $f \in\SolM$ is also a solution to~\eqref{eq:FE1}
with the sequence $(T_i)_{i \geq1}$ replaced by the family $(L(v))_{v
\in\mathcal{T}}$.
\end{Lemma}

The proof of Lemma~\ref{Lem:HSL_FPE} also works (after some minor
changes) for the more general \emph{very simple lines} defined in
\cite{BK2004}, Section 6. These are stopping lines where for any $v
\in\V$ whether $v$ is on the line or not is determined by the
ancestry of $v$, but along different ancestral lines the stopping rules
may be different.
\begin{pf*}{Proof of Lemma~\ref{Lem:HSL_FPE}}
Let $\mathcal{T}$ denote an a.s.\ dissecting HSL and fix $t \geq0$. Define
$B$ to be the set where $[M]_v(t L(v)) = \prod_{i \geq1} [M]_{vi}(t
L(vi))$ for all $v \in\V$.
In view of equation~\eqref{eq:DisintegratedFPE}, the invariance of
$\Prob(\bT\in\cdot)$ under the shift $[\cdot]_v$ and the
independence of $[\bT]_v$ and $L(v)$, we have $\Prob(B) = 1$. Since
$\mathcal{T}$ is a HSL, there exists some formal stopping rule $\sigma
$ such that $\mathcal{T} = \mathcal{T}_{\sigma}$. Putting $\mathcal
{T}_n := \mathcal{T}_{\sigma\wedge n}$ we have that $\mathcal{T}_n$
is the HSL where along each path from the root to the boundary the
stopping vertices are chosen according to the stopping rule $\sigma
\wedge n$. By induction over $n$, we infer that on $B$
\[
M(t) = \prod_{v \in\mathcal{T}_n} [M]_v(t L(v))
\]
for all $n \geq0$. Passing to the limit $n \to\infty$ yields the
assertion since $\mathcal{T}$ is a.s.\ dissecting so that $\mathcal
{T}=\mathcal{T}_n$ for some (random) $n$.
\end{pf*}

Now we wish to approximate a disintegrated fixed point $M$ not only by
the sequence $M_n(t)$, $n \geq0$, which takes the product over a fixed
generation, but also by terms like $M_{\mathcal{T}}(t)$, where the
product is taken over all vertices in a HSL $\mathcal{T}$. Here, as in
\cite{BK1997}, we focus on special HSLs, namely, first exit lines~$\mathcal{T}_t$ based on the first exit times $\tau(t)$, viz.\ $\tau
(t):=\inf\{k\ge0\dvtx s_{k}>t\}$ and
\[
\mathcal{T}_t := \mathcal{T}_{\tau(t)}
= \{v \in\V\dvtx  S(v) > t \mbox{ and } S(v|k) \leq t \mbox{ for }
k=0,\ldots,|v|-1\}.
\]

\begin{Lemma} \label{dissecting}
Assume~\eqref{eq:A2} and~\eqref{eq:A3} hold. Then
\textup{(a)} $\sup_{|v| \geq n} L(v) \to0$ a.s., and \textup{(b)} $\mathcal{T}_t$
is dissecting a.s.
\end{Lemma}
\begin{pf} By Theorem 3 in~\cite{Big1998},
$\sup_{|v| = n} L(v) \to0$ a.s., which implies the first assertion.
This is equivalent to $\inf_{|v| \geq n}S(v) \to\infty$ a.s. Thus
there is a (random) $n(t)$ such that
$\inf_{|v| \geq n(t)}S(v)>t$ and then every $v \in\mathcal{T}_t$ has
$|v| \leq n(t)$.
\end{pf}

\begin{Lemma} \label{Lem:disintegration_via_HSL}
Given $f \in\SolM$ with disintegration $M$, the following assertions
hold for all $t\ge0$:
\begin{longlist}[(a)]
\item[(a)] $ {\lim_{n \to\infty} \sum_{|v|=n} 1-f(tL(v)) = -\log
M(t)}$ a.s.
\item[(b)] $ \lim_{u \to\infty} \prod_{v \in\mathcal{T}_u}
f(tL(v)) = M(t)$ a.s.
\item[(c)] $ {\lim_{u \to\infty} \sum_{v \in\mathcal{T}_u}
1-f(tL(v)) = -\log M(t)}$ a.s.
%jdb4.1 \item[(d)] $ {\lim_{u \to\infty} \sum_{v \in\mathcal{T}_u}
%1-f(L(v)) = -\log M(1)}$ a.s.
\end{longlist}
\end{Lemma}
\begin{pf}
(a)
Using Lemma~\ref{dissecting}(a), $f(0+) = 1$, and $-\log x \sim1-x$
as $x \to1$, we infer for arbitrary $t > 0$
\[
-\log M(t)
 = -\log\lim_{n \to\infty} \prod_{|v|=n} f(tL(v))
 = \lim_{n \to\infty} \sum_{|v|=n} 1-f(tL(v))
\qquad \mbox{a.s.}
\]

(b) For $u \geq0$, denote by $\A_{\mathcal{T}_u} := \sigma(T(v)\dvtx  v
\prec\mathcal{T}_u)$ the pre-$\mathcal{T}_u$ $\sigma$-algebra.
Here, $v \prec V$ for $v \in\V$ and $V \subseteq\V$ means that $v$
has no ancestor in $V$, in particular, $v \notin V$ (see \cite
{Jag1989} for a full discussion). More precisely, $\A_{\mathcal
{T}_u}$ is defined as
\[
\label{eq:A_T}
\A_{\mathcal{T}_u} = \sigma \bigl(\{T(v) \in A \} \cap\{v \prec\mathcal
{T}_u\}\dvtx  v \in\V, A \in\B([0,\infty)^{\N}) \bigr),
\]
where $\B$ denotes the Borel-$\sigma$-algebra. $\A_{\mathcal{T}_u}$
increases as $u$ increases.
Since, by Lemma~\ref{dissecting}(b), $\mathcal{T}_u$ is dissecting,
the proof of Lemma 6.1 in~\cite{BK1997} applies in the current context
to give
\[
M_{\mathcal{T}_u}(t) := \prod_{v \in\mathcal{T}_u} f(tL(v)) = \E
[M(t) | \A_{\mathcal{T}_u}]\qquad
\mbox{a.s.}
\]
Now let $\mathcal{G} := \sigma(\A_{\mathcal{T}_u}\dvtx  u \geq0)$.
Standard theory implies that $M_{\mathcal{T}_u}(t) \to\E[M(t) |
\mathcal{G}]$ a.s.\ as $u \uparrow\infty$. It remains to show that
$M(t)$ is measurable w.r.t.\ $\mathcal{G}$. Since~$M(t)$ is a function
of the weight ensemble $(L(v))_{v \in\V}$, it suffices to show that
any $L(v)$, $v \in\V$ is $\mathcal{G}$-measurable. To this end, fix
$v = v_1 \ldots v_n \in\N^n$. If $L(v) = 0$ and thus $S(v) = \infty
$, we have $v \not\prec\mathcal{T}_{u}$ for all $u \geq0$. If, on
the other hand, $L(v) > 0$, then $v \in\mathcal{T}_u$ for all $u>\max
_{k=0,\ldots,n} S(v|k)$. In both cases, $L(v) = \lim_{u \to\infty}
L(v) \1_{\{v \prec\mathcal{T}_u\}}$. For any fixed $u$,
\[
L(v) \1_{\{v \prec\mathcal{T}_u\}}
= \1_{\{v \prec\mathcal{T}_u\}} \prod_{k=0}^{n-1} T_{v_{k+1}}(v|k) \1
_{\{v|k \prec\mathcal{T}_u\}}.
\]
Clearly, $\1_{\{v \prec\mathcal{T}_u\}}$ is $\A_{\mathcal
{T}_u}$-measurable. Elementary arguments further show that the
$T_{v_{k+1}}(v|k) \1_{\{v|k \prec\mathcal{T}_u\}}$ are also $\A
_{\mathcal{T}_u}$-measurable. Thus, $M(t)$ is $\mathcal
{G}$-measurable. Finally, we should remark that the formulation of the
convergence in Lemma~\ref{Lem:disintegration_via_HSL} indicates that
the convergence holds outside a $\Prob$-null set for any sequence $u
\uparrow\infty$. This is indeed true, for it can be shown that the
martingale $(M_{\mathcal{T}_u}(t))_{u \geq0}$ a.s.\ has
right-continuous paths. (This follows basically from the fact that
a.s.\ the positions $S(v)$, $v \in\V$ do not accumulate in finite
intervals $(a,b)$, $-\infty< a < b < \infty$.) Since we only need
convergence along a fixed subsequence in what follows, we omit further details.

(c) This follows by combining assertion (b) with the arguments given
in~(a), where the simple observation that $L(v) \leq e^{-u}$ for any $v
\in\mathcal{T}_u$ replaces the use of Lemma~\ref{dissecting}(a).
\end{pf}

\begin{Lemma} \label{Lem:Identify_end_FP_and_disintegration}
Let $f \in\SolM$ with disintegration $M$. Suppose further that $1-f$
is regularly varying of index $\alpha$ at $0$ in the nongeometric
case, while in the $r$-geometric case $(1-f(ut))/(1-f(t)) \to u^{\alpha
}$ whenever $u \in r^{\Z}$ and $t$ approaches $0$ through a fixed
residue class $sr^{\Z}$, $s > 0$. Then the\vadjust{\goodbreak} following assertions hold:
\begin{longlist}[(a)]
\item[(a)]
$W_t := -\log M(t)$
is an endogenous fixed point w.r.t.\ $T^{(\alpha)}$ for any $t>0$.
\item[(b)]
If $1-f$ is regularly varying of index $\alpha$ at $0$, then
$M(t)=e^{-t^{\alpha}W_1}$ a.s.\ for all $t \geq0$,
and~\eqref{eq:E} holds with $W=W_1$.
\end{longlist}
\end{Lemma}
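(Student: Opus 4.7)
The strategy is to obtain (b) first; part (a) in the non-geometric case then follows immediately from the disintegration equation, and the $r$-geometric case of (a) is established by a parallel sandwich argument along the lattice.

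\emph{Part (b).} Fix $t > 0$. By parts (c) and (d) of Lemma \ref{Lem:disintegration_via_HSL},
\begin{equation*}
W_t \;=\; \lim_{u \to \infty} \sum_{v \in \mathcal{T}_u}(1-f(tL(v))), \qquad W \;=\; \lim_{u \to \infty} \sum_{v \in \mathcal{T}_u}(1-f(L(v))) \quad \text{a.s.}
\end{equation*}
Every $v \in \mathcal{T}_u$ satisfies $L(v) \le e^{-u}$, so the pointwise definition of regular variation delivers, for each $\varepsilon > 0$, some $\delta_{\varepsilon} > 0$ with $|(1-f(tx))/(1-f(x)) - t^{\alpha}| < \varepsilon$ for $0 < x < \delta_{\varepsilon}$. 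For $u$ large enough this bound holds simultaneously at every $v \in \mathcal{T}_u$; sandwiching the inner sum, sending $u \to \infty$, and then $\varepsilon \to 0$ gives $W_t = t^{\alpha} W$ a.s.\ for each fixed $t > 0$. Monotonicity of $t \mapsto M(t)$ (each pre-limit $\prod_{|v|=n} f(tL(v))$ is non-increasing in $t$) upgrades this, via a dense-subset sandwich, to $M(t) = e^{-t^{\alpha} W}$ for all $t \ge 0$ a.s. Finally, monotone convergence gives $f(t) = \E e^{-t^{\alpha} W} \to \Prob(W<\infty)$ as $t \downarrow 0$, and $f(0+) = 1$ forces $W < \infty$ a.s.

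\emph{Part (a), non-geometric case.} Measurability of $W_t$ as a function of $\bT$ is built into the definition of $M(t)$. Plugging the pathwise identity $M(t) = e^{-t^{\alpha} W}$ into the disintegration equation $M(t) = \prod_{|v|=n} [M]_v(tL(v))$ from Lemma \ref{Lem:disintSFPE} and cancelling $t^{\alpha}$ after taking logarithms yields $W = \sum_{|v|=n} L(v)^{\alpha} [W]_v$ a.s., the endogeny equation for $W = W_1$; multiplication by $t^{\alpha}$ transfers the same identity to $W_t$.

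\emph{Part (a), $r$-geometric case, and main obstacle.} Here $L(v) \in r^{\Z}$ a.s.\ on $\{L(v) > 0\}$. For any fixed $t_0 > 0$, the lattice hypothesis (with $s = t_0$ and scaling factor $r \in r^{\Z}$) gives $(1-f(r t_0 L(v)))/(1-f(t_0 L(v))) \to r^{\alpha}$ as $L(v) \to 0$ through $r^{\Z}$, and the sandwich of Step 1 applied to the HSL representations of $W_{rt_0}$ and $W_{t_0}$ delivers $W_{rt_0} = r^{\alpha} W_{t_0}$ a.s.; induction in both directions gives $W_{r^k t_0} = r^{k\alpha} W_{t_0}$ a.s.\ for every $k \in \Z$. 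Applied to the i.i.d.\ subtree at each $v$ and glued via the countable partition of $\{L(v) > 0\}$ by the value of $\log_r L(v)$, this produces $[W_{tL(v)}]_v = L(v)^{\alpha} [W_t]_v$ a.s.; substitution into $W_t = \sum_{|v|=n} [W_{tL(v)}]_v$ (the logarithm of Lemma \ref{Lem:disintSFPE}) yields the endogeny equation for $W_t$. The central technical hurdle throughout is the uniform-in-$v$ control of the ratio appearing in the sandwich; it is resolved by the elementary but crucial observation $L(v) \le e^{-u}$ for $v \in \mathcal{T}_u$, which turns a potentially uniform assertion over the random index set $\mathcal{T}_u$ into the pointwise regular-variation estimate, bypassing any appeal to Karamata-type uniform convergence.
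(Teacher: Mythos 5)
Your argument is correct, but it is organised differently from the paper's. You establish part (b) first, using Lemma \ref{Lem:disintegration_via_HSL}(c)--(d) together with the deterministic bound $L(v)\le e^{-u}$ on $\mathcal{T}_u$ to sandwich $\sum_{v\in\mathcal{T}_u}(1-f(tL(v)))$ against $\sum_{v\in\mathcal{T}_u}(1-f(L(v)))$, then deduce (a) from (b) in the non-geometric case and run a separate lattice argument (with gluing over $\{\log_r L(v)=k\}$) in the $r$-geometric case. The paper instead derives a single subtree-level identity $-\log[M]_v(st)=s^{\alpha}(-\log[M]_v(t))$ a.s.\ for every $v\in\V$ and $s\in r^{\Z}$ (or $s>0$ in the non-geometric case), using the fixed-generation form of Lemma \ref{Lem:disintegration_via_HSL}(a) and the a.s.\ fact $\sup_{|v|=n}L(v)\to 0$ to justify the same ratio-sandwich; specialising $s=L(v)$ with $|v|=n$ gives (a) in one stroke for both the geometric and non-geometric cases, and specialising $v=\varnothing$ gives (b). Your observation that $L(v)\le e^{-u}$ on $\mathcal{T}_u$ ``bypasses Karamata-type uniform convergence'' is not really a saving: the paper's fixed-generation route already avoids uniform convergence of slowly varying ratios, since $\sup_{|v|=n}L(v)\to 0$ a.s.\ means that, for a.e.\ $\omega$, all arguments eventually fall below $\delta_{\varepsilon}$, which is exactly what a pointwise sandwich needs. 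Two small omissions relative to the paper: you never verify $\Prob(W_t>0)>0$, which Definition \ref{Def:end_FP} requires for $W_t$ to qualify as an endogeneous fixed point (it follows from $\E M(t)=f(t)<1$ for $t>0$); and you do not state that $W_t$ is a measurable function of $\bT$ other than by assertion, though this is immediate. Overall your proof is sound and your dense-subset upgrade to ``$M(t)=e^{-t^{\alpha}W}$ for all $t$ simultaneously'' is a worthwhile bit of care that the paper leaves implicit; the paper's route is slightly tighter in avoiding the geometric/non-geometric case split inside part (a).
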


\begin{pf}
(a)
Fix $t > 0$ and let $W_t := -\log M(t)$. By the proof of Lem\-ma~6.2 in
\cite{AM2009}, $\E M(t) = f(t) < 1$ and thus $\Prob(W_t > 0) > 0$.
For any $v \in\V$ and $s \in r^{\Z}$, a combination of Lemma \ref
{dissecting}(a), our assumptions on the behavior of~$f$ at $0$ and
Lemma~\ref{Lem:disintegration_via_HSL}(a) gives
\begin{eqnarray} \label{eq:W*_st}
-\log[M]_v(s t)
& = &
\lim_{n \to\infty} \sum_{|u|=n} 1-f(s t [L(u)]_v)
\\
& = &
\lim_{n \to\infty} \sum_{|u|=n} \frac{1-f(s t [L(u)]_v)}{1-f(t
[L(u)]_v)} \bigl(1-f(t [L(u)]_v)\bigr)  \nonumber\\
& = &
s^{\alpha} \lim_{n \to\infty} \sum_{|u|=n} 1-f(t[L(u)]_v) \nonumber
\\
& = &
s^{\alpha} (-\log[M]_v(t))
\qquad \mbox{a.s.} \label{eq:u^alpha_W*_t}
\end{eqnarray}
Use this with $s=L(v)$ for $|v|=n$, and recall \eqref
{eq:DisintegratedFPE} to obtain
\begin{eqnarray*}
W_t
& = &
-\log\prod_{|v|=n} [M]_v(tL(v)) \\
& = &
\sum_{|v|=n} -\log[M]_v(tL(v))
= \sum_{|v|=n} L(v)^{\alpha} [W_t]_v
\qquad \mbox{a.s.,}
\end{eqnarray*}
where in the $r$-geometric case $L(v) \in r^{\Z}$ a.s.,\ for all $v
\in\V$ has been utilized. We have thus proved that $W_t$ is an
endogenous fixed point.

(b)
By an application of equations~\eqref{eq:W*_st} and \eqref
{eq:u^alpha_W*_t}, which are valid for all
$s>0$ if $1-f$ is regularly varying of index $\alpha$ at $0$, we
infer, with $t=1$, $v=\varnothing$, that
\[
M(s) = e^{-s^{\alpha} W_1} \qquad \mbox{a.s.}
\]
for any $s>0$.
Now, using~\eqref{eq:Disintegration_integrated}, $f(t) = \phi
(t^{\alpha})$ for all $t \geq0$ where $\phi$ denotes the Laplace
transform of $W_1$. Therefore $f \in \mathcal{M}$ implies that $\phi
(t) \to1$ as $t \downarrow0$, so that $W_1 < \infty$ a.s.\ and $\phi
(t) < 1$ for $t>0$, so $W_1$ is not identically zero.
Finally, $f \in\SolM$ implies that $\phi$ satisfies~\eqref{eq:FE1}
with $T^{(\alpha)}$ in place of $T$.
\end{pf}

\begin{Lemma} \label{Lem:disintegration varphi}
Let $\varphi$ in~\eqref{eq:E} have disintegration $\Phi$ (w.r.t.\
$T^{(\alpha)}$). If $1-\varphi$ is regularly varying of index $1$ at
$0$, then
$\varphi$ is the Laplace transform of $-\log\Phi(1)$.
\end{Lemma}
\begin{pf}
This follows immediately from Lemma \ref
{Lem:Identify_end_FP_and_disintegration}(b).
\end{pf}

%s9 ###
\section{Results for general branching processes} \label{sec:GBP}

The weighted branching mod\-el introduced in Section~\ref{sec:WBM} gives
rise to the definition of a related general (CMJ) branching process.
This is a critical connection here and in~\cite{BK1997,BK2005}.
Recall that $S(v) := -\log L(v)$ for $v \in\V$.
Let $\mathcal{T}^{\st}_n$ denote the HSL associated with the stopping
rule $\sigma^{\st}_n$, the $n$th strictly ascending ladder index
(defined in Section~\ref{sec:WBR}), and let $\mathcal{T}^{\st}$ be
another notation for $\mathcal{T}^{\st}_1$.
The $n$th generation in this general branching process is given by
\[
% \label{eq:Sigma^{\st}_0,n}
\mathcal{Z}^{\st}_n := \sum_{v \in\mathcal{T}_n^{\st}} \delta_{S(v)},
\]
where the $S(v)$ occurring here are the birth times of the individuals
in this generation.
The reproduction point process $\mathcal{Z}^{\st}$ of this general
branching process is given by
$\mathcal{Z}^{\st} :=\mathcal{Z}^{\st}_1$. Quantities derived from
$\mathcal{Z}$, like $N$ and $m$, have counterparts for $\mathcal
{Z}^{\st}$ that will be denoted by $N^{\st}$, $m^{\st}$ and so on.
Specifically, let $T^{\st} := (T^{\st}_i)_{i \geq1}$ be the
enumeration of the family $\{L(v)\dvtx  v \in\mathcal{T}^>\}$
in decreasing order where $T_i^{\st} := 0$ if $i> |\mathcal{T}^{\st}|$.
Lemma~\ref{Lem:wlog_T_i<1} below establishes properties of $T^{\st}$
that are inherited from $T$,
or equivalently from the corresponding point process $\mathcal{Z}$,
which was introduced just before~\eqref{eq:A1}.
These properties can easily be reinterpreted as properties of the
reproduction point process $\mathcal{Z}^{\st}$.

\begin{Lemma}[(\textit{cf.} Theorem 10 in~\cite{BK2005} and
Proposition 5.1 in~\cite{AK2005})] \label{Lem:wlog_T_i<1}
If $T$ satisfies~\eqref{eq:A1}--\eqref{eq:A3}, then so does $T^{\st}
= (T^{\st}_i)_{i \geq1}$.
Thus
\begin{eqnarray*}
&\Prob(T^{\st}_i \in\{0,1\} \mbox{ for any } i \geq1) < 1, \qquad \E
N^{\st} > 1 \quad \mbox{and}
\\
&1=m^{\st}(\alpha)<m^{\st}(\beta) \qquad \mbox{for all }\beta\in
[0,\alpha).
\end{eqnarray*}
Moreover, if $T$ satisfies~\eqref{eq:A4a}, then so does $T^{\st}$,
and similarly for~\eqref{eq:A4b}.
Finally, $\G(T)=\G(T^{\st})$, where $\G(T)$ and $\G(T^{\st})$
denote the minimal closed multiplicative subgroups of $\R^+$ generated
by $T$ and $T^{\st}$, respectively.
\end{Lemma}
\begin{pf}
Under the given assumptions, we can apply Lemma~\ref{Lem:m} to infer
that $\Prob(\sigma^{\st} < \infty) = 1$.
Hence Proposition 5.1 in~\cite{AK2005} implies that the sequence
$(T_i^{\st})_{i \geq1}$ satisfies conditions~\eqref{eq:A1}--\eqref{eq:A3}.
Further, if also~\eqref{eq:A4a} is assumed for~$T$, then again
Proposition 5.1 in~\cite{AK2005} yields the validity of \eqref
{eq:A4a} for $T^{\st}$.
If $T$ satisfies~\eqref{eq:A4b}, that is, if $m(\theta) < \infty$
for some $\theta< \alpha$, then Lemma~\ref{Lem:exponential_moments}
yields $m^{\st}(\theta) < \infty$ for the same $\theta$. It remains
to prove that $\G(T) = \G(T^{\st})$. To this end, notice that $-\log
\G(T) = \G(\mu_{\alpha})$ and $-\log\G(T^{\st}) = \G(\mu
_{\alpha}^{\st})$, where $\G(\mu_{\alpha})$ and $\G(\mu_{\alpha
}^{\st})$ denote the minimal closed additive subgroups of $\R$
generated by the distributions $\mu_{\alpha}$ and $\mu_{\alpha
}^{\st}$, respectively. Now, $\mu_{\alpha} = \Prob(S_1 \in\cdot)$
while by equation~\eqref{eq:embedded_connection}, $\mu_{\alpha
}^{\st} = \Prob(S_{\sigma^{\st}} \in\cdot)$. From classical
renewal theory (see, e.g.,~\cite{BD1994}, Section 2) we know that the
minimal closed subgroups generated by a distribution and the associated
ladder height distribution coincide if the associated ladder index is
a.s.\ finite.\vadjust{\goodbreak}
\end{pf}

The key reference for CMJ processes is~\cite{Ner1981}, where $\mu
^{\st}$ is assumed not to be concentrated on a centred lattice (which
corresponds exactly to what is here called the continuous or
nongeometric case) but ``all results could be modified to the lattice
case''~\cite{Ner1981}, page 366.
The details of the lattice case (at least concerning a.s.\ convergence
results) have been supplied in~\cite{Gat2000}.

Keep in mind that $\mathcal{T}_t$ is defined to be the HSL associated
with the first exit time $\tau(t)$. Define $W^{(\alpha)}_{\mathcal
{T}_t} := \sum_{v \in\mathcal{T}_t} L(v)^{\alpha}$. The first
result is just a version of~\cite{Ner1981}, Proposition 2.4.

\begin{Prop} \label{Prop:martingale}
$(W^{(\alpha)}_{\mathcal{T}_t})_{t \geq0}$ is a nonnegative
martingale with\break a.s. limit~$W^{(\alpha)}$.
\end{Prop}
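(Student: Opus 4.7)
The plan is to verify the claim in three steps: adaptedness and integrability, the martingale identity itself, and the identification of the almost sure limit with $W^{(\alpha)}$.

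First, I would work with the natural filtration $(\A_{\mathcal{T}_t})_{t \geq 0}$ defined in analogy with \eqref{eq:A_T}. This filtration is increasing in $t$ (since $\mathcal{T}_s$ precedes $\mathcal{T}_t$ for $s \leq t$) and $W^{(\alpha)}_{\mathcal{T}_t}$ is clearly $\A_{\mathcal{T}_t}$-measurable. For integrability, I would apply the many-to-one identity \eqref{eq:embedded_connection} with stopping rule $\sigma = \tau(t)$, which is almost surely finite by Lemma \ref{Lem:m}, to obtain $\E W^{(\alpha)}_{\mathcal{T}_t} = \Prob(\tau(t) < \infty) = 1$ for every $t \geq 0$.

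For the martingale property at $0 \leq s \leq t$, I would decompose $\mathcal{T}_t$ through $\mathcal{T}_s$: each $v \in \mathcal{T}_s$ either already satisfies $S(v) > t$ (so $v \in \mathcal{T}_t$) or satisfies $S(v) \leq t$, in which case its descendants in $\mathcal{T}_t$ form precisely $\{vw : w \in [\mathcal{T}_{t-S(v)}]_v\}$. This yields
\begin{equation*}
W^{(\alpha)}_{\mathcal{T}_t}
~=~ \sum_{v \in \mathcal{T}_s,\, S(v) > t} L(v)^{\alpha}
~+~ \sum_{v \in \mathcal{T}_s,\, S(v) \leq t} L(v)^{\alpha}\, [W^{(\alpha)}_{\mathcal{T}_{t-S(v)}}]_v.
\end{equation*}
By the strong branching property inherent in the weighted branching model, the shifted weight families $[\bT]_v$, $v \in \mathcal{T}_s$, are conditional on $\A_{\mathcal{T}_s}$ i.i.d. copies of $\bT$ independent of $\A_{\mathcal{T}_s}$. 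Taking conditional expectation and invoking $\E W^{(\alpha)}_{\mathcal{T}_{t-S(v)}} = 1$ from the previous step collapses the second sum to $\sum_{v \in \mathcal{T}_s,\, S(v) \leq t} L(v)^{\alpha}$, and combining with the first sum gives $\E[W^{(\alpha)}_{\mathcal{T}_t} \mid \A_{\mathcal{T}_s}] = W^{(\alpha)}_{\mathcal{T}_s}$, as required.

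Being a non-negative $L^1$-bounded martingale, $(W^{(\alpha)}_{\mathcal{T}_t})$ then converges almost surely to a $[0,\infty)$-valued limit $W_\infty$. To identify $W_\infty$ with $W^{(\alpha)}$, I would work under the additional assumption \eqref{eq:A5} (so $S$ strictly increases along paths with positive weights) and fix $n \in \N_0$. The random threshold $t_n := \max\{S(v): |v|=n,\, L(v)>0\}$ is almost surely finite, and for $t \geq t_n$ every generation-$n$ vertex $v$ with $L(v) > 0$ is a strict ancestor of some element of $\mathcal{T}_t$, yielding
\begin{equation*}
W^{(\alpha)}_{\mathcal{T}_t} ~=~ \sum_{|v|=n,\, L(v)>0} L(v)^{\alpha}\, [W^{(\alpha)}_{\mathcal{T}_{t-S(v)}}]_v.
\end{equation*}
Letting $t \to \infty$ along a fixed sequence gives $W_\infty = \sum_{|v|=n} L(v)^{\alpha} [W_\infty]_v$ a.s., so $W_\infty$ satisfies the same endogeneous recursion as $W^{(\alpha)}$. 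The hard part is to conclude $W_\infty = W^{(\alpha)}$ outside the UI case: in the non-degenerate regime Proposition \ref{Prop:uniqueness_of_end_FP} combined with the mean comparison of Proposition \ref{Prop:integrable_endogenous_FP} forces equality, while in the degenerate regime one must argue that $W_\infty$ vanishes together with $W^{(\alpha)}$. The cleanest route is to appeal to the standard CMJ identification of intrinsic and generational martingale limits (\cite{Ner1981} in the non-lattice case, \cite{Gat2000} in the lattice case) applied to the embedded process built on $\mathcal{T}^{>}$, which is the main obstacle I would expect to spend time on.
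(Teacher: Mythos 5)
Your proposal reaches the same endpoint as the paper: both ultimately rest on Nerman's result (Proposition 2.4 of \cite{Ner1981}, together with its lattice analogue in \cite{Gat2000}) applied to the embedded CMJ process on $\mathcal{T}^{>}$. The paper simply cites that proposition for the whole statement, whereas you re-derive the martingale property from scratch — the decomposition of $\mathcal{T}_t$ through $\mathcal{T}_s$, the strong branching property along HSLs, and the many-to-one identity \eqref{eq:embedded_connection} combined with Lemma \ref{Lem:m} to get mean one — and this derivation is correct, if somewhat redundant since Nerman's proposition already contains it. For the identification of the limit with $W^{(\alpha)}$ you correctly single out the CMJ route as the cleanest, and that is exactly what the paper does.

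One genuine issue worth flagging: the alternative route you sketch for the non-degenerate regime, via Proposition \ref{Prop:uniqueness_of_end_FP}, would be circular. Proposition \ref{Prop:uniqueness_of_end_FP} is proved using Theorem \ref{Prop:essential_uniqueness_and_reg_var}(b); the proof of the latter in Section \ref{sec:unique} goes through Theorem \ref{Thm:appr_W}; and the proof of Theorem \ref{Thm:appr_W} invokes precisely the a.s.\ convergence $W^{(\alpha)}_{\mathcal{T}_t}\to W^{(\alpha)}$ that Proposition \ref{Prop:martingale} asserts. Proposition \ref{Prop:integrable_endogenous_FP} alone is safe (its proof uses only the generational martingale $W^{(\alpha)}_n$) and does yield, for an integrable endogeneous $W_\infty$ with $\Prob(W_\infty>0)>0$, that $W_\infty = (\E W_\infty)\,W^{(\alpha)}$ and that $\E W^{(\alpha)}=1$; but closing the gap $\E W_\infty = 1$ still needs uniform integrability of the Nerman martingale or a direct appeal to Nerman's identification, so the circular shortcut cannot be salvaged without it. Since you already present the CMJ identification as the intended resolution, this is a caveat on an aside rather than a flaw in your main plan.
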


Let $T_t$ be the number of births in the general branching process up
to and including time $t$, that is,
\[
T_t =
|\{v \in\V\dvtx  v \in\mathcal{T}^{\st}_n \mbox{ for some } n \in\N
_0 \mbox{ and } S(v) \leq t\}|.
\]
Let $S$ be the survival set of the process $(N_n)_{n \geq0}$.
Then $S = \{T_t \to\infty\}$ a.s.,\ and $S$ has positive probability
iff $\E N^{\st} > 1$.
Moreover, $S = \{W^{(\alpha)} > 0\}$ a.s.\ if $\Prob(W^{(\alpha)}>0)
> 0$, which is guaranteed by~\eqref{eq:A4a}.

The next result provides us with sufficient conditions for \emph{ratio
convergence} on $S$ of this general branching processes counted by
certain characteristics.
More precisely, it focuses on the asymptotic behavior of the ratio
\begin{equation} \label{eq:ratios}
\frac{\sum_{v \in\mathcal{T}_t} e^{-\beta(S(v)-t)}\1_{\{S(v)-t >
c\}}}{\sum_{v \in\mathcal{T}_t} e^{-\alpha(S(v)-t)}}
\end{equation}
with $\beta>0$. The formulation of the next result is adapted to apply
to both lattice ($r$-geometric) and continuous (nongeometric) cases.

\begin{Prop} \label{Prop:ratios}
Assume~\eqref{eq:A1}--\eqref{eq:A3}, and let $\varepsilon> 0$. Then
the following two assertions hold:
\begin{longlist}[(a)]
\item[(a)]
If~\eqref{eq:A4a} is satisfied, then for $\beta= \alpha$
and all sufficiently large $c$
\begin{equation} \label{eq:ratio_convergence}
\frac{\sum_{v \in\mathcal{T}_t} e^{-\beta(S(v)-t)}\1_{\{S(v)-t >
c\}}}{\sum_{v \in\mathcal{T}_t} e^{-\alpha(S(v)-t)}}
\to \varepsilon(c)
\leq \varepsilon\qquad \mbox{on $S$ as } t \to\infty
\end{equation}
in probability.
\item[(b)]
If~\eqref{eq:A4b} is satisfied, then~\eqref{eq:ratio_convergence}
holds true in the a.s.\ sense for any $\beta\geq\theta$ and all
sufficiently large $c$ (depending on $ \beta$).
\end{longlist}
\end{Prop}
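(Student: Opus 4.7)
The plan is to express both the numerator and the denominator of \eqref{eq:ratios} as characteristic-count processes of the CMJ process with reproduction point process $\mathcal{Z}^>$ from \eqref{eq:Sigma^>_0,1}, and then to invoke Nerman's ratio limit theorem \cite{Ner1981}, supplemented in the $r$-geometric case by \cite{Gat2000}. Lemma \ref{Lem:wlog_T_i<1} guarantees that $\mathcal{Z}^>$ inherits \eqref{eq:A1}--\eqref{eq:A4}, and the Malthusian identity $\int e^{-\alpha x}\mu^>(dx)=1$, which is the $n=1$ case of \eqref{eq:embedded_connection}, shows that $\alpha$ is the Malthusian parameter of this CMJ, placing us inside Nerman's framework.

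The first step is a CMJ description of the exit line $\mathcal{T}_t$: $v\in\mathcal{T}_t$ if and only if $v\in[\mathcal{T}^>]_u$ for some CMJ-individual $u$ with $S(u)\le t<S(v)$. The nontrivial direction uses that the inequalities $S(v|k)\le t<S(v)$ for $k<|v|$ force $|v|$ to be a strictly ascending ladder epoch of the walk $(S(v|k))_{k\le|v|}$, so that $v$ is itself a CMJ-individual whose CMJ-parent $u$ automatically satisfies $S(u)\le t$. Hence, for any measurable $g\ge 0$,
\begin{equation*}
\sum_{v\in\mathcal{T}_t}g(S(v)-t)
~=~\sum_u\chi_{g,u}(t-S(u))\,\1_{\{S(u)\le t\}},
\end{equation*}
where the outer sum runs over all CMJ-individuals and the random characteristic $\chi_{g,u}(a):=\sum_{w\in[\mathcal{T}^>]_u}g([S(w)]_u-a)\1_{\{[S(w)]_u>a\}}$ depends only on the reproduction of $u$. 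Taking $g(x)=e^{-\beta x}\1_{\{x>c\}}$ for the numerator and $g(x)=e^{-\alpha x}$ for the denominator identifies both quantities as CMJ characteristic sums $Y^{\phi_c}_t$ and $Y^{\psi}_t$.

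Nerman's ratio theorem then gives, on the survival set $S$, convergence of the ratio \eqref{eq:ratios} to
\begin{equation*}
\varepsilon(c)~=~\frac{\int_0^\infty e^{-\alpha a}\,\E\phi_c(a)\,da}{\int_0^\infty e^{-\alpha a}\,\E\psi(a)\,da}.
\end{equation*}
Fubini together with the identification $e^{-\alpha y}\mu^>(dy)=\Prob(S_{\sigma^>}\in dy)$ from \eqref{eq:embedded_connection} reduces this, when $\beta=\alpha$, to $\varepsilon(c)=\E(S_{\sigma^>}-c)^+/\E S_{\sigma^>}$, which is finite under \eqref{eq:A4a} by Lemma \ref{Lem:first_ladder_height_moment} and tends to $0$ as $c\to\infty$ by dominated convergence. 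The analogous computation for $\beta\ge\theta$ under \eqref{eq:A4b}, using Lemma \ref{Lem:exponential_moments} to secure the finiteness of $\int e^{-\beta y}\mu^>(dy)$, yields an explicit $\varepsilon(c)$ that likewise vanishes as $c\to\infty$. Either way, $c$ can be chosen so large that $\varepsilon(c)\le\varepsilon$.

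The hard part is verifying the hypotheses of Nerman's theorem in the two regimes and accounting for why (a) and (b) are stated differently. Under \eqref{eq:A4b} the characteristic $\phi_c$ admits a non-increasing integrable majorant, because $\beta\ge\theta$ brings the required finite $e^{-\beta y}$-moment of $\mu^>$, and this triggers the almost-sure form of Nerman's theorem, giving (b); under the weaker \eqref{eq:A4a} such a dominant is not available for general $\beta$, only the in-probability form of Nerman's theorem applies, and the restriction $\beta=\alpha$ in (a) is precisely what is needed to control integrability using merely the finiteness of $\E S_{\sigma^>}$. The $r$-geometric case is handled uniformly by the lattice version of the ratio theorem developed in \cite{Gat2000}, which produces the same $\varepsilon(c)$ and hence the same bound.
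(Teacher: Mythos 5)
Your proposal is correct and follows essentially the same route as the paper's proof: express both sums over $\mathcal{T}_t$ as Nerman characteristic counts for the CMJ process built on the ascending-ladder reproduction $\mathcal{Z}^>$, identify $\alpha$ as the Malthusian parameter via \eqref{eq:embedded_connection}, invoke Nerman's Theorems 3.1 and 6.3 (and \cite{Gat2000} in the lattice case), and use \eqref{eq:A4a} to ensure $\E S_{\sigma^>}\in(0,\infty)$ for the in-probability version when $\beta=\alpha$, resp.\ \eqref{eq:A4b} and Lemma \ref{Lem:exponential_moments} to secure the integrable majorant needed for the a.s.\ version when $\beta\ge\theta$. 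The only cosmetic difference is that the paper first reduces to \eqref{eq:A5} via Section \ref{sec:wlog}, after which $\mathcal{Z}^>$ coincides with $\mathcal{Z}$ and the characteristic/exit-line identification is immediate, whereas you keep the ladder embedding explicit and supply the (correct) argument that every $v\in\mathcal{T}_t$ is a CMJ individual whose CMJ parent $u$ has $S(u)\le t$; the two formulations are equivalent.
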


\begin{pf}
The result follows from Theorems 3.1 and 6.3 in~\cite{Ner1981} and the
corresponding lattice-case results
if we check that the appropriate conditions are fulfilled. In what
follows we restrict ourselves to the continuous case, the lattice case
being similar.

First note that in the situation of both assertions (a) and (b), \eqref
{eq:A1}--\eqref{eq:A4} are fulfilled. Thus, by Lemma~\ref{Lem:wlog_T_i<1},
we know that~\eqref{eq:A1}--\eqref{eq:A4} also hold for $T^{\st}$,
and hence, with appropriate translation, for $\mathcal{Z}^{\st}$. The
sums in the ratio in~\eqref{eq:ratio_convergence} are functions of the
BRW $(\mathcal{Z}_n)_{n \geq0}$. Now notice that since in both sums
the summation is over $v \in\mathcal{T}_t$, and the first crossings
of the level $t$ necessarily only occur on vertices that are members of
a strictly increasing ladder line $\mathcal{T}_n^{\st}$, the sums
remain unaffected when replacing the underlying BRW $(\mathcal
{Z}_n)_{n \geq0}$ by the embedded BRW $(\mathcal{Z}^{\st}_n)_{n \geq
0}$. Therefore, by proving the result for the embedded process instead
of the original, it is no loss of generality to assume that $T_i < 1$
for all $i \geq1$, equivalently, $S(v) > 0$ for all $|v|=1$. In this
situation, by~\eqref{eq:A2}, the general branching process $(\mathcal
{Z}_n)_{n \geq0}$ is supercritical. The validity of~\eqref{eq:A3}
implies the existence of a \emph{Malthusian parameter} (viz., $\alpha
$), which is Nerman's condition (ii) in the introduction of~\cite
{Ner1981}, and that of~\eqref{eq:A4} ensures the validity of Nerman's
condition (iii) (this is immediate if~\eqref{eq:A4a} holds whereas it
follows from the fact that, in the given situation, $m$ is strictly
decreasing and convex on $[\theta,\infty)$ in the case that~\eqref
{eq:A4b} holds). Finally, since we are discussing the continuous case,
Nerman's condition (i) is also satisfied.

Now, following Nerman's notation, the numerator in~\eqref{eq:ratios}
derives from
the characteristic
\begin{eqnarray*}
\phi(t)
& = &
\1_{[0,\infty)}(t) \sum_{|v|=1} e^{-\beta(S(v)-t)} \1_{\{S(v)>t+c\}}
\\
& \leq&
\1_{[0,\infty)}(t) \sum_{|v|=1} e^{-\beta(S(v)-t)} \1_{\{S(v)>t\}}
\end{eqnarray*}
and the denominator from
\[
\psi(t) = \1_{[0,\infty)}(t) \sum_{|v|=1} e^{-\alpha(S(v)-t)} \1_{\{
S(v)>t\}}.
\]
Both $e^{-\beta t} \phi(t)$ and $e^{-\alpha t} \psi(t)$ are
decreasing in $t\geq0$. Thus, $\phi$ and $\psi$ have paths in the
Skorohod $D$-space and $\E\phi(t)$ and $\E\psi(t)$ are continuous
almost everywhere w.r.t.\ Lebesgue measure.
Thus the conditions of this form needed in Theorems 3.1 and 6.3 in
\cite{Ner1981} do hold.

Now we prove part (a) of the proposition,
where $\beta=\alpha$ in $\phi$. To this end, assume that \eqref
{eq:A4a} holds. Then $\phi$ and $\psi$ satisfy condition (3.2) of
Theorem~3.1 in~\cite{Ner1981} because
\begin{eqnarray*}
e^{-\alpha t} \phi(t)
& \leq&
e^{-\alpha t} \psi(t) \\
& = &
e^{-\alpha t} \1_{[0,\infty)}(t) \sum_{|v|=1} e^{-\alpha(S(v)-t)} \1
_{\{S(v)>t\}}
\leq \sum_{|v|=1} e^{-\alpha S(v)}
\end{eqnarray*}
for all $t \geq0$. Moreover,
\begin{eqnarray*}
\int_0^{\infty} e^{-\alpha t} \E\phi(t)\, \mathrm{d}t
& \leq&
\int_0^{\infty} e^{-\alpha t} \E\psi(t) \, \mathrm{d}t
=
\int_0^{\infty} \E\sum_{|v|=1} e^{-\alpha S(v)} \1_{\{S(v)>t\}} \,
\mathrm{d}t \\
& = &
\E\sum_{|v|=1} S(v) e^{-\alpha S(v)}
=
-m'(\alpha),
\end{eqnarray*}
where we have used Fubini's theorem. Furthermore, $-m'(\alpha)$ is
positive and finite.
Since $e^{-\alpha t} \psi(t)$ is decreasing, the integral criterion
ensures the validity of condition (3.1) of Theorem 3.1 in \cite
{Ner1981} for both $\phi$ and $\psi$.
Hence, by Theorem 3.1 of~\cite{Ner1981} and another use of \eqref
{eq:embedded_connection}, we get
\begin{eqnarray*}
e^{-\alpha t} \sum_{v \in\mathcal{T}_t} e^{-\alpha(S(v)-t)} \1_{\{
S(v)-t > c\}}
& \to&
W^{(\alpha)} \frac{\int_0^{\infty} e^{-\alpha s} \E\phi(s) \,
\mathrm{d}s}{\E S_1} \\
& = &
W^{(\alpha)} \frac{\int_c^{\infty} \Prob(S_1 > s) \, \mathrm
{d}s}{-m'(\alpha)}
\end{eqnarray*}
in probability as $t \to\infty$. For the denominator, Proposition
\ref{Prop:martingale} shows that
\[
e^{-\alpha t} \sum_{v \in\mathcal{T}_t} e^{-\alpha(S(v)-t)}
= W^{(\alpha)}_{\mathcal{T}_t}
\to W^{(\alpha)}\qquad  \mbox{a.s.}
\]
Thus, the ratio tends to $\varepsilon(c) := (-m'(\alpha))^{-1} \int
_c^{\infty} \Prob(S_1 > s) \, \mathrm{d}s$ in probability on the set
of survival $S$ as $t \to\infty$. Finally, integrability of $S_1$
ensures that~$\varepsilon(c)$ can be made arbitrarily small.

Turning to the proof of part (b), suppose that~\eqref{eq:A4b} holds,
which gives
\[
\E\sum_{|v|=1} e^{-\theta S(v)}
= m(\theta) < \infty.
\]
This implies the validity of Nerman's Condition 6.1. As for his
Condition~6.2, fix $\beta\geq\theta$, and observe that $e^{-\beta
(S(v) - t)} \leq e^{-\theta(S(v)-t)}$ on $\{S(v) > t\}$. Thus,
\[
e^{-\theta t} \1_{[0,\infty)}(t) \sum_{|v|=1} e^{-\beta(S(v)-t)} \1
_{\{S(v)>t\}}
\leq
\sum_{|v|=1} e^{-\theta S(v)},
\]
which is integrable by~\eqref{eq:A4b}. Therefore, $\phi$ and $\psi$
satisfy Nerman's Condition~6.2. Hence, by Theorem 6.3 in \cite
{Ner1981}, we infer that the ratio in the proposition tends to
$\varepsilon(c)$ a.s.\ on $S$ where $\varepsilon(c)$ is defined as in
the proof of part (a). By the same reasoning as above, $\varepsilon
(c)$ tends to $0$ as $c$ tends to $\infty$ which completes our argument.
\end{pf}

Proposition~\ref{Prop:ratios} is an essential ingredient to the proof
of the next result, which is in the spirit of Theorem 8.6 in \cite
{BK1997} and is designed to give conditions which allow \eqref
{eq:GBP_limit} to be deduced from~\eqref{eq:given}.

\begin{Theorem} \label{Thm:GBP_limit}
Suppose that~\eqref{eq:A1}--\eqref{eq:A3},~\eqref{eq:A4b} and the
following three conditions hold for a sequence $t_{n}\uparrow\infty$,
which in the $r$-geometric case takes values in $d\Z$ ($d := \log r$)
only for all $n \geq1$:
\begin{longlist}[(iii)]
\item[(i)]
There are a nonnegative function $H$ and a random variable $W$ such that
\begin{equation} \label{eq:given}
\sum_{v \in\mathcal{T}_{t_n}} e^{-\alpha S(v)} H(S(v)) \to W
\qquad \mbox{a.s.\ as } n \to\infty.
\end{equation}
\item[(ii)]
For some $h < \infty$,
\[
\varepsilon_n(a) = \biggl(\frac{H(a+t_n)}{H(t_n)} - h \biggr) \to 0
\qquad \mbox{as } n \to\infty
\]
uniformly in $a$ on compact subsets of $[0,\infty)$.
\item[(iii)]
For a finite $K$, all $a \geq0$ and all sufficiently large $n\ge1$
\begin{equation} \label{eq:outH}
\frac{H(a+t_n)}{H(t_n)} \leq K e^{(\alpha-\theta) a}.
\end{equation}
\end{longlist}
Then
\begin{equation} \label{eq:GBP_limit}
H(t_n) \sum_{v \in\mathcal{T}_{t_n}} e^{-\alpha S(v)} \to h W
\qquad \mbox{a.s. }  (n \to\infty),
\end{equation}
where in the $r$-geometric case it suffices that \textup{(ii)} holds for $a \in
d\Z$ only and uniform convergence on compact subsets of $[0,\infty)$
can be dropped.
\end{Theorem}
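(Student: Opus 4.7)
The plan is to truncate the sum from (i) at the excess level $a_v := S(v) - t_n \geq 0$ of each vertex $v \in \mathcal{T}_{t_n}$, exploit (ii) and (iii) on the two ranges separately, and pass to the limit $n \to \infty$ with the truncation $c$ fixed before sending $c \to \infty$. Denote the target quantity by $A_n := H(t_n)\sum_{v \in \mathcal{T}_{t_n}} e^{-\alpha S(v)}$ and the hypothesis-(i) quantity by $W_n := \sum_{v \in \mathcal{T}_{t_n}} e^{-\alpha S(v)} H(S(v))$. On $\{a_v \leq c\}$, (ii) lets us replace $H(S(v))/H(t_n)$ by $h + o(1)$, while on $\{a_v > c\}$, the domination (iii) together with Proposition \ref{Prop:ratios}(b) controls the tail.

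Concretely, write $W_n = I_n(c) + J_n(c)$ and $A_n = \tilde{A}_n(c) + R_n(c)$, where tilded sums are over $\{a_v \leq c\}$ and $J_n(c), R_n(c)$ are over the complementary set. Hypothesis (ii) yields, for each fixed $c$,
\begin{equation*}
I_n(c) \;=\; \bigl(h + \eta_n(c)\bigr)\tilde{A}_n(c), \qquad \eta_n(c) \to 0 \text{ as } n \to \infty.
\end{equation*}
For the tail, the identity $e^{-\alpha S(v)} e^{(\alpha-\theta)a_v} = e^{-\alpha t_n} e^{-\theta a_v}$ combined with (iii) gives $J_n(c) \leq K H(t_n) e^{-\alpha t_n} \sum_{a_v > c} e^{-\theta a_v}$. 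Since $\alpha \geq \theta$, Proposition \ref{Prop:ratios}(b) applies with $\beta = \theta$ and produces $\sum_{a_v > c} e^{-\theta a_v} \leq (\varepsilon(c) + o(1)) \sum_v e^{-\alpha a_v}$ a.s., which on multiplying by $H(t_n)e^{-\alpha t_n}$ and using $\sum_v e^{-\alpha a_v} = e^{\alpha t_n}\sum_v e^{-\alpha S(v)}$ becomes $J_n(c) \leq K (\varepsilon(c) + o(1)) A_n$ a.s. The same proposition with $\beta = \alpha$ gives $R_n(c) \leq (\varepsilon(c) + o(1)) A_n$ a.s.

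Substituting back into $W_n = (h + \eta_n(c))(A_n - R_n(c)) + J_n(c)$ produces the algebraic relation
\begin{equation*}
(h + \eta_n(c)) A_n \;=\; W_n - J_n(c) + (h + \eta_n(c)) R_n(c),
\end{equation*}
which, after choosing $c$ so large that the combined $\varepsilon(c)$-coefficient is strictly less than $h$, first yields the a priori bound $\limsup_n A_n < \infty$ a.s. Then, taking $\limsup$ and $\liminf$ for fixed $c$ (using $W_n \to W$ from (i), $\eta_n(c) \to 0$ from (ii), and the tail bounds) confines both quantities within $O(\varepsilon(c))$ of $W/h$; sending $c \to \infty$ concludes $A_n \to W/h$ a.s. Since (ii) at $a = 0$ gives $H(t_n)/H(t_n) = 1 \to h$ and hence $h = 1$, we have $W/h = hW$, matching the stated conclusion.

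The principal obstacle is coordinating the a.s.\ statements cleanly: Proposition \ref{Prop:ratios}(b) supplies a distinct null set for each value of $c$, so the argument should fix $c$ in a countable set (e.g.\ $c \in \mathbb{N}$ in the non-geometric case, or $c \in d\mathbb{N}$ in the $r$-geometric case) and work on the intersection of the resulting full-measure events with the event that (i) holds. In the $r$-geometric case, since $t_n, S(v) \in d\mathbb{Z}$ force $a_v \in d\mathbb{Z}$, condition (ii) need only be verified at lattice points and the lattice version of Proposition \ref{Prop:ratios}(b) alluded to just before its statement applies; the rest of the argument goes through verbatim.
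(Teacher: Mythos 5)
Your proof is correct and follows essentially the same approach as the paper: truncate at excess level $c$, use the uniform convergence in (ii) on $[0,c]$, use (iii) together with Proposition~\ref{Prop:ratios}(b) to control the tail, then send $c\to\infty$. The paper organizes the bookkeeping a bit more compactly by working directly with the ratio $W_n/A_n$, which makes the a priori boundedness of $A_n$ unnecessary, but this is purely cosmetic; your observation that (ii) at $a=0$ forces $h=1$ (so $W/h = hW$) is a nice explicit remark that the paper leaves implicit.
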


\begin{pf}
Note first that, by increasing $K$ if necessary, condition (iii)
implies that for all large $n$
\[
|\varepsilon_n(a)| \leq Ke^{(\alpha-\theta) a} \qquad (a \geq0).
\]
Clearly, the limits in~\eqref{eq:given} and~\eqref{eq:GBP_limit} are
both zero when $\mathcal{T}_{t_n}$ is eventually empty, and so
attention can center on the survival set $S$. For this proof let $\sum
$ be the sum over $v \in\mathcal{T}_{t_n}$. Then, considering the
ratio of the terms on the left-hand sides of~\eqref{eq:given} and
\eqref{eq:GBP_limit},
\begin{eqnarray*}
\frac{\sum e^{-\alpha S(v)} H(S(v))}{H(t_n) \sum e^{-\alpha S(v)}}
& = &
\frac{\sum e^{-\alpha S(v)} H(S(v))/H(t_n)}{\sum e^{-\alpha S(v)}} \\
& = &
\frac{\sum e^{-\alpha S(v)} (h + \varepsilon_n(S(v)-t_n))}{\sum
e^{-\alpha S(v)}} \\
& = &
h + \frac{\sum e^{-\alpha(S(v)-t_n)} \varepsilon_n(S(v)-t_n)}{\sum
e^{-\alpha(S(v)-t_n)}}.
\end{eqnarray*}
Fix $c > 0$ and note that $\delta_n := \sup\{|\varepsilon_n(a)|\dvtx  0
\leq a \leq c\}$ tends to $0$ by condition~(ii). Then
\[
\biggl|\frac{\sum e^{-\alpha(S(v)-t_n)} \varepsilon_n(S(v)-t_n)}{\sum
e^{-\alpha(S(v)-t_n)}} \biggr|
\leq
\delta_n + \frac{\sum e^{-\theta(S(v)-t_n)} K \1_{\{S(v)-t_n > c\}
}}{\sum e^{-\alpha(S(v)-t_n)}}.\vadjust{\goodbreak}
\]
Using Proposition~\ref{Prop:ratios}, the right-hand side goes to zero
as $n$ and then $c$ tends to infinity. In the $r$-geometric case the
same argument works with $\delta_n := \max\{|\varepsilon_n(a)|\dvtx  a
\in[0,c]\cap d\Z\}$, which converges to zero when the convergence in
(ii) holds for $a \in d\Z$.
\end{pf}

%s10 ###
\section{\texorpdfstring{Proof of Theorem \protect\ref{Thm:essential_uniqueness_and_reg_var}}
{Proof of Theorem 3.1}}
\label{sec:unique}

\begin{Lemma} \label{Lem:end_reg_var_at_0}
Assume that~\eqref{eq:A1}--\eqref{eq:A3} and~\eqref{eq:E} hold and
that\break $\limsup_{n \to\infty} S_n = \infty$ a.s.
Then $D(t) = (1-\varphi(t))/t$ is slowly varying at $0$.
\end{Lemma}

Note that in the situation of the lemma, condition~\eqref{eq:A4} is
sufficient for $\limsup_{n \to\infty} S_n = \infty$ a.s.\ to hold.
The following proof is based on the proofs of Theorem 1.4 in \cite
{BK1997} and Theorem 1 in~\cite{Kyp1998}.

\begin{pf*}{Proof of Lemma~\ref{Lem:end_reg_var_at_0}}
For fixed $t > 0$, $u^{-1}(1-\varphi(ut))$ is the Laplace transform of
a $\sigma$-finite measure on $[0,\infty)$ (see, e.g.,~\cite{Fel1971}, Section
XIII.2, equation\ (2.7)) and thus so is $u^{-1}(1-\varphi
(ut))/(1-\varphi(t))$. The latter is bounded by $u^{-1} \vee1$ for $u
> 0$. A~standard selection argument shows that any sequence decreasing
to 0 contains a subsequence $(t_n)_{n \geq1}$ such that
\[
u^{-1} \frac{1-\varphi(ut_n)}{1-\varphi(t_n)}
\mathop{\longrightarrow}_{n \to\infty} l(u)\qquad  (u > 0)
\]
for some decreasing and convex function $l\dvtx (0,\infty) \to(0,\infty)$.
Now fix any such $(t_n)_{n \geq1}$ with corresponding limiting
function $l$. Then, by reproducing the following telescoping sum from
\cite{BK1997}, page 345, which is obtained from the fact that $\varphi
$ satisfies the functional equation~\eqref{eq:FE1} with $T_i^{\alpha
}$ instead of $T_i$, we get
\begin{eqnarray*} \label{eq:1st_BK's_trick}
l(u)
& = &
\lim_{n \to\infty}
\frac{1-\varphi(ut_n)}{u (1-\varphi(t_n))} \\
& = &
\lim_{n \to\infty}
\E\sum_{i\geq1} T_i^{\alpha} \frac{1-\varphi(u T_i^{\alpha}
t_n)}{uT_i^{\alpha}(1-\varphi(t_n))}
\prod_{k<i}\varphi(u t_n T_k^{\alpha}) \\
& \geq&
\E\sum_{i\geq1}\liminf_{n \to\infty} T_i^{\alpha} \frac
{1-\varphi(u T_i^{\alpha} t_n)}{uT_i^{\alpha}(1-\varphi(t_n))}
\prod_{k<i} \varphi(u t_n T_k^{\alpha}) \\
& = &
\E\sum_{i\geq1} T_i^{\alpha} l(uT_i^{\alpha})
=
\E l(ue^{-\alpha S_1}),
\end{eqnarray*}
where the inequality follows from a double application of Fatou's Lemma
and the last equality stems from~\eqref{eq:connection} with $n=1$.
Thus $(l(ue^{-\alpha S_n}))_{n \geq0}$ is a nonnegative
supermartingale and a.s.\ convergent to some finite limiting variable
$g(u)$. Here,
\[
g(u) = \lim_{n \to\infty} l(ue^{-\alpha S_n})
= \limsup_{n \to\infty} l(ue^{-\alpha S_n}) = l(0+),
\]
using the assumption that $\limsup_{n \to\infty} S_n = \infty$ a.s.
In particular, since the expectation of a supermartingale is
decreasing, $l(1) \geq\E g(1) = l(0+)$. On the other hand, by the
monotonicity of $l$, for any $0 < u \leq1$, \mbox{$l(0+) \geq l(u) \geq l(1)
= 1$}. Thus $l(u)=1$ for all $u \in(0,1]$. Since this limit is
independent of the choice of $(t_n)_{n \geq1}$, $D$ is slowly varying
at $0$.
\end{pf*}

\begin{Theorem} \label{Thm:appr_W}
Suppose that~\eqref{eq:A1}--\eqref{eq:A3},
\eqref{eq:E} and either $\E W^{(\alpha)} = 1$ or~\eqref{eq:A4b}
hold. Let $\Phi$ be the disintegration of $\varphi$ (w.r.t
$T^{(\alpha)}$).
Then
\[
\lim_{t \to\infty} e^{\alpha t}\bigl(1-\varphi(e^{-\alpha t})\bigr) \sum_{v
\in\mathcal{T}_t} L(v)^{\alpha}
= -\log\Phi(1) \qquad \mbox{a.s.}
\]
\end{Theorem}

The theorem also holds under~\eqref{eq:A1}--\eqref{eq:E}, for, from
\cite{Lyo1997},
$\E W^{(\alpha)} = 1$ is slightly weaker than~\eqref{eq:A4a}.
\begin{pf*}{Proof of Theorem~\ref{Thm:appr_W}}
Let $\newW:= -\log\Phi(1)$.
We first consider the case that $\E W^{(\alpha)} = 1$. Then
\begin{equation} \label{eq:W^alpha_along_T_t}
W^{(\alpha)}_{\mathcal{T}_t} = \sum_{v \in\mathcal{T}_t}
L(v)^{\alpha} \to W^{(\alpha)}
\qquad \mbox{a.s.\ as } t \to\infty.
\end{equation}
Now, for a contradiction, suppose that $(1-\varphi(t))/t \rightarrow
\infty$ as $t \downarrow0$. Then, for any $K > 0$, using Lemma \ref
{Lem:disintegration_via_HSL}(c) and~\eqref{eq:W^alpha_along_T_t},
\[
\newW
= \lim_{t \to\infty} \sum_{v \in\mathcal{T}_t} \bigl(1- \varphi
(L(v)^{\alpha})\bigr)
\geq \lim_{t \to\infty} \sum_{v \in\mathcal{T}_t} K L(v)^{\alpha}
= K W^{(\alpha)}\qquad \mbox{a.s.}
\]
Letting $K \uparrow\infty$ yields $\Prob(\newW= \infty) \geq\Prob
(W^{(\alpha)} > 0) > 0$.
Then $\newW= \infty$ on $S$ and is zero otherwise. Thus
$\varphi(1)=\E e^{-\newW}=1-\Prob(S)$ which contradicts the
assumptions in~\eqref{eq:E} since $\varphi(t) \downarrow1-\Prob(S)$
as $t \uparrow\infty$.
Thus, $(1-\varphi(t))/t \rightarrow c < \infty$ and so, using Lemma
\ref{dissecting}(a),
\[
\newW
= \lim_{t \to\infty} \sum_{v \in\mathcal{T}_t} \frac{1- \varphi
(L(v)^{\alpha})}{L(v)^{\alpha}}L(v)^{\alpha}
= c W^{(\alpha)}\qquad
\mbox{a.s.}
\]
which combines with~\eqref{eq:W^alpha_along_T_t} to give the result.

Now suppose that~\eqref{eq:A4b} holds.
Recall that $D(x) := x^{-1}(1-\varphi(x))$ and is slowly varying at
the origin by Lemma~\ref{Lem:end_reg_var_at_0}. Slow variation implies
that $|D(xy)/D(y) - 1| \to0$ as $y \downarrow0$ uniformly in $x$ on
compact subsets of $(0,\infty)$, and for any $\varepsilon> 0$ there
exists a finite $K$ and a $C > 0$ such that $D(xy)/D(y) \leq
Kx^{-\varepsilon}$ for all $x \leq1$ and $y \leq C$. (These
statements follow from Theorem 1.2.1 in~\cite{BGT1989}, and from the
integral representation of slowly varying functions given in Theorem
1.3.1 in~\cite{BGT1989}.) Let $H(t) := D(e^{-\alpha t})$. Then
assumptions~(ii) and~(iii) of Theorem~\ref{Thm:GBP_limit} hold, and
(i) follows from a calculation similar to that at the beginning of the
proof of Lemma~\ref{Lem:Identify_end_FP_and_disintegration}.
Therefore, Theorem~\ref{Thm:GBP_limit} completes the proof.\vadjust{\goodbreak}
\end{pf*}

\begin{pf*}{Proof of Theorem
\ref{Thm:essential_uniqueness_and_reg_var}}
Slow variation is given in Lemma~\ref{Lem:end_reg_var_at_0}. It
remains to show uniqueness up to a scale factor. Recall that
$\varphi_{\alpha}$ is the Laplace transform of $W^{(\alpha)}$.
If $\E W^{(\alpha)} = 1$, the result follows already from the proof of
Theorem~\ref{Thm:appr_W}, where we showed that
$\varphi(t)=\varphi_{\alpha}(ct)$ for some $c \in(0,\infty)$.
In the general case,
let $\widehat{W}$ be another variable, but with Laplace transform~$\widehat{\varphi}$,
satisfying~\eqref{eq:E}, and let $\widehat
{D}(t) := t^{-1}(1-\widehat{\varphi}(t))$, $t>0$. Then, by Theorem~\ref{Thm:appr_W},
\[
\lim_{t \to\infty} D(e^{-\alpha t}) \sum_{v \in\mathcal{T}_t}
L(v)^{\alpha}
= -\log\Phi(1)\qquad \mbox{a.s.}
\]
An analogous result holds for $\widehat{\varphi}$ and its
disintegration $\widehat{\Phi}$. On the other hand,
\[
\lim_{t \to\infty} \frac{D(e^{-\alpha t}) \sum_{v \in\mathcal
{T}_t} L(v)^{\alpha}}{\widehat{D}(e^{-\alpha t}) \sum_{v \in
\mathcal{T}_t} L(v)^{\alpha}}
= \lim_{t \to\infty} \frac{D(e^{-\alpha t})}{\widehat
{D}(e^{-\alpha t})} \qquad \mbox{a.s.\ on }S,
\]
that is, the limit of the ratios is a deterministic nonnegative
constant $c \in[0,\infty]$, say. This implies $-\log\Phi(1) =
c(-\log\widehat{\Phi}(1))$ a.s. Now, by Lemma \ref
{Lem:Identify_end_FP_and_disintegration}, $-\log\Phi(1)$ and $-\log
\widehat{\Phi}(1)$ are both a.s.\ finite and positive with positive
probability which implies $c \in(0,\infty)$.
Thus $\varphi(t)=\widehat{\varphi}(ct)$ in view of Lemma \ref
{Lem:disintegration varphi}.
\end{pf*}

%s11 ###
\section{Regular variation at $0$ of fixed points} \label{sec:Reg_var_in_0}

The key to the proof of Theorem~\ref{Thm:Disintegration} is the
verification that, for any $f \in\SolM$, if~\eqref{eq:A4} holds,
$1-f$ is regularly varying at $0$ with index $\alpha$ in the
continuous case, and it is ``nearly'' regularly varying otherwise.

\begin{Theorem} \label{Thm:Reg_var_in_0}
Assuming~\eqref{eq:A1}--\eqref{eq:E}, any $f \in\SolM$ satisfies
\begin{equation} \label{eq:Reg_var_in_0}
\lim_{t \downarrow0} \frac{1-f(ut)}{1-f(t)} = u^{\alpha}
\end{equation}
for all $u \in(0,\infty)$ in the continuous case and all $u \in r^{\Z
}$ in the $r$-geometric case, where in the latter case the limit $t
\downarrow0$ is restricted to some arbitrary fixed residue class $s
r^{\Z}$, $s \in[1,r)$.
\end{Theorem}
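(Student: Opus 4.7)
The strategy is to apply Theorem \ref{Thm:GBP_limit} to the function $H(s):=e^{\alpha s}(1-f(e^{-s}))$, $s\in\R$, which translates \eqref{eq:Reg_var_in_0} into the statement $H(s+a)/H(s)\to 1$ as $s\to\infty$ (restricted to $a\in d\Z$ and $s$ in a fixed residue class modulo $d:=\log r$ in the lattice case).

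First I would invoke Section \ref{sec:wlog} to assume \eqref{eq:A5}, so that $T_i<1$ a.s.\ for all $i$. Condition (i) of Theorem \ref{Thm:GBP_limit} is then supplied directly by Lemma \ref{Lem:disintegration_via_HSL}(d):
\begin{equation*}
\sum_{v\in\mathcal{T}_s}e^{-\alpha S(v)}H(S(v))
\;=\;\sum_{v\in\mathcal{T}_s}\bigl(1-f(L(v))\bigr)
\;\longrightarrow\;W\;:=\;-\log M(1)\quad\text{a.s.}
\end{equation*}

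Second, I would establish the growth bound (iii) of Theorem \ref{Thm:GBP_limit}, which amounts to $R_u(t):=(1-f(ut))/(1-f(t))\le Ku^{\theta}$ for $u\in(0,1]$ and all sufficiently small $t$, with $\theta$ furnished by \eqref{eq:A4b} (an analogous argument, with $\theta$ replaced by $\alpha$ and relying on Lemma \ref{Lem:first_ladder_height_moment}, covers the case in which only \eqref{eq:A4a} holds). The bound is extracted from the version of the functional equation over the first-ladder HSL $\mathcal{T}^>$ provided by Lemma \ref{Lem:HSL_FPE}, using the monotonicity of $1-f$, the fact that $L(v)<1$ for $v\in\mathcal{T}^>$, and the finiteness of $\E\sum_{v\in\mathcal{T}^>}L(v)^{\theta}$ from Lemma \ref{Lem:exponential_moments}.

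Third, I would verify condition (ii) of Theorem \ref{Thm:GBP_limit} by subsequential compactness. Under (iii), any sequence $s_n\to\infty$ admits a further subsequence along which the ratios $R_{e^{-a}}(e^{-s_n})$ converge, for $a$ in a countable dense subset of $[0,\infty)$, to a non-decreasing function $l$ with $l(1)=1$ (monotonicity of $l$ is inherited from the monotonicity of $u\mapsto R_u(t)$, which follows from $1-f$ being non-decreasing). Substituting into \eqref{eq:FE1} via the identity $1-\prod_i x_i=\sum_i(1-x_i)\prod_{j<i}x_j$ and letting $s_n\to\infty$ (so that $\prod_{j<i}f(e^{-s_n-a}T_j)\to 1$) produces, in the spirit of Lemma \ref{Lem:end_reg_var_at_0}, a one-sided Choquet--Deny type inequality for $l$ along the random walk associated to $T^{(\alpha)}$. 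Since $\limsup_n S_n=\infty$ a.s.\ by Lemma \ref{Lem:m}, this inequality together with the normalisation $l(1)=1$ forces $l(u)=u^{\alpha}$ on all of $(0,1]$ (respectively on $r^{\Z}$ in the lattice case). As every subsequential limit is the same, the convergence holds as $t\downarrow 0$, and \eqref{eq:Reg_var_in_0} follows upon inverting the substitution $t=e^{-s}$, $u=e^{-a}$.

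The hard part will be the second step: the only tool available to improve the trivial bound $R_u(t)\le 1$ (valid for $u\le 1$ by monotonicity alone) to the required power bound $R_u(t)\le Ku^{\theta}$ is the functional equation itself, and the finiteness of $\E\sum_{v\in\mathcal{T}^>}L(v)^{\theta}$ from Lemma \ref{Lem:exponential_moments} must be fed into a careful iteration along $\mathcal{T}^>$, without appealing to any of the convex-analytic shortcuts that would be available if $f$ were a Laplace transform. A secondary delicate point occurs in the third step: the subsequence $s_n$ is fixed but the functional-equation identity produces terms of the form $R_u(e^{-s_n}T_i)$ with random $T_i$, whose limits cannot be read off directly from $l$; the passage to the limit proceeds via Fatou's lemma applied to $\liminf$ and $\limsup$ of $R_u$ as $t\downarrow 0$, with the monotonicity in $u$ ensuring that the two sides agree in the limit.
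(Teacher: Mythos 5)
Your high-level plan (selection argument, then a Choquet--Deny step to pin down the subsequential limit) shares the skeleton of the paper's proof, but there are three genuine gaps, and the second is serious enough that the argument as proposed does not close.

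\textbf{First, the invocation of Theorem \ref{Thm:GBP_limit} is backwards.} That theorem has condition (ii) --- precisely the asymptotic $H(a+t_n)/H(t_n)\to h$ --- as a \emph{hypothesis}, and it outputs a.s.\ convergence of the general-branching-process functional \eqref{eq:GBP_limit}. You cannot use it to ``verify (ii)''; if you already knew (ii) with $h=1$ you would be done, and if you do not know (ii) the theorem gives you nothing. In the paper Theorem \ref{Thm:GBP_limit} is used only later (in Theorem \ref{Thm:appr_W}) once regular variation is already in hand; it plays no role in establishing Theorem \ref{Thm:Reg_var_in_0} itself.

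\textbf{Second, and centrally, your step 3 does not establish $l(u)=u^{\alpha}$.} The supermartingale argument of Lemma \ref{Lem:end_reg_var_at_0} that you cite relies on a feature specific to Laplace transforms: there $u^{-1}(1-\varphi(ut))$ is the Laplace transform of a $\sigma$-finite measure, so the subsequential limit $l$ is automatically \emph{decreasing and convex}, and then $l(1)=1$ together with $l(0+)=1$ forces $l\equiv 1$. For general $f\in\SolM$ one only knows that $u\mapsto(1-f(ut))/(1-f(t))$ is non-decreasing; after dividing by $u^{\alpha}$ there is no usable monotonicity, and the supermartingale convergence plus $\limsup_n S_n=\infty$ only give you information at $u=0+$, not a functional form. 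The paper instead applies the Choquet--Deny theorem of Rama\-chandran--Lau (Theorem 2.2.2 of \cite{RS1994}) to the locally-integrable function $\tilde g$ in \eqref{eq:CDE}, concluding $\tilde g$ is constant almost everywhere, hence $g(u)=cu^{\alpha}$ (or $h(u)u^{\alpha}$ in the lattice case) for \emph{some} constant $c\le 1$; and then the paper's Step 3 runs a separate two-subsequence consistency argument (compare the limits along $(t_n)$ and along $(u^{-1}t_n)$, yielding $c'(su)^{\alpha}=cc'(su)^{\alpha}$, hence $c=1$). This last step is indispensable and is absent from your outline.

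\textbf{Third, the growth bound $R_u(t)\le Ku^{\theta}$ --- which you correctly identify as ``the hard part'' --- is not obtained by iterating the functional equation over $\mathcal{T}^{>}$ in the way you suggest.} The telescoping identity $1-\prod_i x_i=\sum_i(1-x_i)\prod_{j<i}x_j$ together with $L(v)<1$ and $\E\sum_{v\in\mathcal{T}^{>}}L(v)^{\theta}<\infty$ gives control of $\E\sum_{v\in\mathcal{T}^{>}}(1-f(tL(v)))\cdot(\text{prefactor})$ in terms of $1-f(t)$, but it does not convert into a pointwise bound $D_{\beta}(ut)/D_{\beta}(t)\le K$ for small $t$ uniformly in $u\le 1$. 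The paper's Step 4 takes an entirely different route: it introduces the disintegration $M$, compares $D_{\alpha}$ with the slowly-varying function $\pD_1$ built from the Laplace transform $\varphi$ of the endogeneous fixed point (Lemma \ref{Lem:K_l_K_u}), and sandwiches $f$ between $\varphi(\Ku t^{\alpha})$ and $\varphi(\Kl t^{\alpha})$ using the a.s.\ limits from Lemma \ref{Lem:disintegration_via_HSL}(d), Theorem \ref{Thm:appr_W}, and Proposition \ref{Prop:ratios}. This argument needs the uniqueness result (Theorem \ref{Prop:essential_uniqueness_and_reg_var}(b)) and the CMJ machinery of Section \ref{sec:GBP}; it is not replaceable by a direct iteration over the ladder stopping line. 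Without a working version of this bound, your dominated-convergence passage in the functional equation (your step 3) is unjustified, so the proof does not close.
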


The rest of this section is devoted to the proof of this theorem, which
is divided into five steps:
The first one provides the justification that we can assume that $T_i <
1$ a.s. for all $i \geq1$.
The second step is a standard selection argument that guarantees that,
for any solution $f \in\SolM$ and any sequence $t \downarrow0$, the
ratio $(1-f(st))/(1-f(t))$ as a~function of $s \in[0,1]$ has a~convergent subsequence.
In the third step we introduce $\SolM^{\beta}$, a~subset of the set
of fixed points containing only fixed points which show a sufficiently
regular behavior at $0$. For $f \in\SolM^{\beta}$, where $\beta:=
\theta$ if~\eqref{eq:A4b} holds and $\beta= \alpha$ otherwise, we
then prove that any limiting function, as obtained in Step 2, satisfies
a Choquet--Deny-type equation. An appeal to the theory of these
functional equations as presented in~\cite{RS1994} provides us with
a~good description of the\vadjust{\goodbreak} behavior of $f$ at $0$. The idea of utilizing
a~Choquet--Deny-type equation has been taken from the proof of Theorem~2.12 in~\cite{DL1983}.
Step~4 proves Theorem~\ref{Thm:Reg_var_in_0} under the additional
assumption that $f \in\SolM^{\beta}$.
Finally, in Step~5, we show that $\SolM^{\beta} = \SolM$.

\subsection*{Step 1: Reduction to the case $T_i < 1$ a.s.\ for all
$i \geq1$}

As in~\cite{BK2005}, Section~3, one element in the approach here is
the reduction to the simpler case when the weights $T_i$ are bounded
from above by $1$.
First, by Lemma~\ref{Lem:HSL_FPE}, $f \in\SolM$ entails that $f$
also solves~\eqref{eq:FE1} with $T$ replaced by $T^{\st}$. By
construction, $T_i^{\st} < 1$ a.s. for all $i \geq1$.
Second, Lemma~\ref{Lem:wlog_T_i<1} ensures that the validity of \eqref
{eq:A1}--\eqref{eq:A3} for $T$ carries over to $T^{\st}$ with the
same characteristic exponent $\alpha$, and the same inheritance holds
true for~\eqref{eq:A4a},~\eqref{eq:A4b} and the minimal closed
subgroup $\G(T)$, respectively.
In other words, the sequence~$T^{\st}$ also satisfies the assumptions
of Theorem~\ref{Thm:Reg_var_in_0} and also the parameters describing
the behavior of $f$ in equation~\eqref{eq:Reg_var_in_0}, the
characteristic exponent $\alpha$ and the multiplicative $\G(T)$,
coincide with the corresponding parameters for the sequence $T^{\st}$.
Consequently, it constitutes no loss of generality to prove Theorem
\ref{Thm:Reg_var_in_0} under the additional assumption
[besides~\eqref{eq:A1}--\eqref{eq:E}]
{\renewcommand{\theequation}{A\arabic{equation}}
\setcounter{equation}{5}
\begin{equation} \label{eq:A5}
T_i < 1 \qquad \mbox{a.s.\ for all } i \geq1.
\end{equation}
}
\vspace*{-\baselineskip}
\setcounter{equation}{1}

\subsection*{Step 2: The selection argument}

\begin{Lemma} \label{Lem:Selection}
Suppose that~\eqref{eq:A1}--\eqref{eq:A5} hold, and let $f \in\SolM
$. Then any sequence decreasing to zero contains a subsequence
$(t_n)_{n \geq1}$ such that, for an increasing function $g\dvtx (0,1] \to
[0,1]$ satisfying $g(1)=1$,
\begin{equation}\label{eq:selection}
\frac{1-f(ut_n)}{1-f(t_n)} \mathop{\longrightarrow}_{n \to\infty} g(u)
\end{equation}
for all $u \in(0,1]$.
\end{Lemma}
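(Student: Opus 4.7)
The plan is to prove the lemma by a Helly-type compactness argument applied to the family of ratios $g_n(u) := (1-f(us_n))/(1-f(s_n))$, where $(s_n)$ denotes the given sequence decreasing to zero. A prerequisite is to verify that $f(t) < 1$ for every $t > 0$, so the denominators are nonzero for all sufficiently large $n$. Suppose for contradiction that the decreasing function $f$ equals $1$ on some interval $[0, t_0]$ with $t_0 > 0$, and pick any $t > t_0$ with $f(t) < 1$, which exists because $f \in \SolM$. The iterated functional equation \eqref{eq:FEn} gives $f(t) = \E \prod_{|v|=n} f(tL(v))$ for every $n \in \N_0$; combining \eqref{eq:A5} with \cite[Theorem 3]{Big1998} yields $\sup_{|v|=n} L(v) \to 0$ a.s., so $\Prob(\sup_{|v|=n} L(v) < t_0/t) \to 1$. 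On that event every factor equals $1$ (because then $tL(v) \leq t_0$ for all $v$), and the product is bounded by $1$ on the complement, so $\E \prod_{|v|=n} f(tL(v)) \to 1$. Since the left-hand side is the constant $f(t) < 1$, this is a contradiction.

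With $f(t) < 1$ for all $t > 0$ in hand, the $g_n$ are well defined on $(0,1]$ for all sufficiently large $n$; each is increasing in $u$ (because $1-f$ is increasing), takes values in $[0,1]$ (because $us_n \leq s_n$), and satisfies $g_n(1) = 1$. A diagonal extraction over a countable dense set $D \subseteq (0,1]$, say $D := \Q \cap (0,1]$, produces a subsequence along which $g_n(u)$ converges in $[0,1]$ for every $u \in D$; the pointwise limit on $D$ is increasing. Extending this limit to $(0,1]$ by right-continuous envelope yields an increasing function $g$ with $g(1) = 1$, and the standard squeeze argument based on monotonicity then gives $g_n(u) \to g(u)$ at every continuity point $u$ of $g$.

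The main remaining obstacle is to upgrade pointwise convergence from continuity points of $g$ to every $u \in (0,1]$, which the lemma explicitly demands. Because $g$ is increasing, its set of discontinuities $J \subset (0,1]$ is at most countable, so a second diagonal extraction along $J$ produces a further subsequence $(t_n)$ on which $g_n(u)$ also converges at every $u \in J$. Redefining $g$ at each $u \in J$ to be this new limit preserves monotonicity (monotonicity in $u$ of each $g_n$ passes to any pointwise limit taken on the enlarged dense set $D \cup J$) and still yields $g(1) = 1$. Along $(t_n)$ the functions $g_n$ now converge to $g$ pointwise on all of $(0,1]$, which completes the proof.
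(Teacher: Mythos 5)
Your proposal is correct and follows essentially the same route as the paper: a Helly-type diagonal extraction over the rationals, followed by a further extraction at the (at most countably many) discontinuity points of the resulting increasing limit. The only notable difference is that the paper simply cites the proof of Lemma 6.2 in \cite{AM2009} for the fact that $1-f(t)>0$ for all $t>0$, whereas you reconstruct this explicitly from the $n$-fold iteration \eqref{eq:FEn} and the a.s.\ convergence $\sup_{|v|=n}L(v)\to 0$; your argument for that step is sound and self-contained, which is a small gain in readability.
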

\begin{pf}
It follows from the proof of Lemma 6.2 in~\cite{AM2009} that $1-f(t) >
0$ for all $t>0$. Thus, the ratio in~\eqref{eq:selection} is well
defined. Now starting with an initial sequence decreasing to zero, we
choose a subsequence giving convergence for each rational $u \in
(0,1]$. This is possible since $(1-f(ut))/(1-f(t)) \in[0,1]$ by the
monotonicity of $f$. This defines an increasing limit, which can have
only countably many discontinuities. Now select further subsequences to
get convergence at any discontinuity and define the resulting limit to
be $g$. Obviously $g(1)=1$.
\end{pf}

\subsection*{Step 3: An application of the theory of Choquet--Deny
equations}

We introduce a subset of $\SolM$ with members that behave more
regularly at $0$. Recall that, for $f \in\SolM$, $D_{\beta}(t)$ is
$(1-f(t))/t^{\beta}$. With this notation,
\begin{equation} \label{eq:Finf^theta}
\SolM^{\beta} :=
\Bigl\{f \in\SolM\dvtx  \sup_{u \leq1, t \leq c} D_{\beta}(ut)/D_{\beta}(t)
< \infty\mbox{ for some } c > 0\Bigr\}.\hspace*{-35pt}
\end{equation}
For the rest of this section let $\beta:= \theta$ if~\eqref{eq:A4b}
holds and $\beta:= \alpha$, otherwise.\vadjust{\goodbreak}

\begin{Lemma} \label{Lem:BK's_trick}
Assume~\eqref{eq:A1}--\eqref{eq:A5} and let $f \in\SolM^{\beta}$.
Then, for any sequence decreasing to zero, there exist a subsequence
$(t_n)_{n \geq1}$ and a function~$h$ satisfying
\[
% \label{eq:TF-trick_R_>}
\lim_{n \to\infty} \frac{1-f(u t_n)}{1-f(t_n)} = h(u) u^{\alpha}
\]
for all $u \in(0,1]$.
In the continuous case, $h$ is one,
while in the lattice case, $h$ is strictly positive and
multiplicatively $r$-periodic with $h(1)=1$.
\end{Lemma}
\begin{pf}
For any given sequence decreasing to zero choose a subsequence
according to Lemma~\ref{Lem:Selection}, that is, a subsequence
$(t_n)_{n \geq1}$ such that the fraction $(1-f(ut_n))/(1-f(t_n))$
converges to $g(u)$ for some increasing function $g\dvtx (0,1] \to[0,1]$
satisfying $g(1)=1$. Then, as in the proof of Lemma~\ref{Lem:end_reg_var_at_0},
\begin{equation} \label{eq:BK's_trick}
\frac{1-f(ut_n)}{u^{\alpha} (1-f(t_n))}
= \E\sum_{i \geq1} T_i^{\alpha} \frac{1-f(u T_i
t_n)}{(uT_i)^{\alpha}(1-f(t_n))}
\prod_{k<i} f(u t_n T_k).
\end{equation}
Since $f \in\SolM^{\beta}$, we have
\[
T_i^{\alpha} \frac{1-f(u T_i t_n)}{(uT_i)^{\alpha}(1-f(t_n))}
\leq K T_i^{\alpha} (uT_i)^{\beta- \alpha}
= K u^{\beta- \alpha} T_i^{\beta}
\]
for sufficiently large $n$, some deterministic constant $K < \infty$
and all $i$. By the definition of $\beta$, $m(\beta)$ is finite and
thus the dominated convergence theorem yields upon letting $n \to
\infty$ in~\eqref{eq:BK's_trick}
\[
g(u)/u^{\alpha} = \E\sum_{i \geq1} T_i^{\alpha} \frac
{g(uT_i)}{(uT_i)^{\alpha}}\qquad (u \in(0,1]).
\]
Equivalently [see~\eqref{eq:connection}], $\widetilde{g}(x) :=
e^{\alpha x} g(e^{-x})$ ($x \geq0$) satisfies the following
Choquet--Deny-type functional equation:
\begin{equation} \label{eq:CDE}
\widetilde{g}(x) = \E\widetilde{g}(x + S_1)\qquad  (x \geq0).
\end{equation}
Since $g$ is increasing and bounded, $\widetilde{g}$ is locally
bounded on $[0,\infty)$ and thus locally integrable w.r.t.\ Lebesgue
measure. Moreover, since $1 = \widetilde{g}(0) = \E\widetilde
{g}(S_1)$, we obtain that $\Prob(\widetilde{g}(S_1) \geq1) > 0$,
which immediately implies that $\widetilde{g}(x_0) \geq1$ for some
\mbox{$x_0 > 0$}. This in combination with $\widetilde{g}$ being the product
of a decreasing function and a positive increasing function gives
$\widetilde{g} > 0$ on $[0,x_0]$.

Now assume first that we are in the continuous case.
Then an application of Theorem 2.2.2 in~\cite{RS1994} shows that
$\widetilde{g}$ equals a constant $c$ almost everywhere w.r.t.\
Lebesgue measure. Utilizing $\widetilde{g} > 0$ on $[0,x_0]$ yields $c
> 0$. Rewriting this in terms of $g$ gives $g(u) = cu^{\alpha}$ almost
everywhere w.r.t.\ Lebesgue measure. From this we conclude that $g(u) =
c u^{\alpha}$ for all $u \in(0,1)$ since $g$ is known to be
increasing. Furthermore, $g(1)=1$ implies $c \leq1$, but to establish
that $c=1$ needs additional reasoning.
Applying this argument a second time, for fixed $s \in(0,1)$, the
sequence $(s^{-1} t_{n})_{n \geq1}$ has a\vadjust{\goodbreak} subsequence $(s^{-1}
t_{n}')_{n \geq1}$ such that for some $c' \in(0,1]$
\[
% \label{eq:TF_reg2}
\lim_{n \to\infty}
\frac{1-f(u s^{-1} t_{n}')}{1-f(s^{-1} t_{n}')} = c' u^{\alpha}
\]
holds for all $u \in(0,1)$.
It now constitutes no loss of generality to assume that $(t_{n})_{n
\geq1} = (t_{n}')_{n \geq1}$.
Then
\begin{eqnarray*}
c' (us)^{\alpha}
&=&
\lim_{n \to\infty} \frac{1-f((us) s^{-1} t_{n})}{1-f(s^{-1}
t_{n})}\\
&=&
\lim_{n \to\infty} \frac{1-f(u t_{n})}{1-f(t_{n})}
\frac{1-f(s s^{-1} t_{n})}{1-f(s^{-1} t_{n})}
 = c u^{\alpha} c' s^{\alpha} = cc' (us)^{\alpha}.
\end{eqnarray*}
Since $c' > 0$ this implies that $c = 1$.

In the lattice case we have that $S_1$ is confined to $\Z^d$ with $d
:= \log r$. Then Corollary 2.2.3 in~\cite{RS1994} yields $\widetilde
{g}(x+nd)=\widetilde{g}(x)$ for all $x \geq0$ and $n \in\N_0$; that
is, $\widetilde{g}$ is $d$-periodic. This immediately provides us with
the identity $g(u) = \widetilde{g}(-\log u) u^{\alpha} =: h(u)
u^{\alpha}$ ($u \in(0,1]$) where $h(u) = \widetilde{g}(-\log u)$ is
multiplicatively $r$-periodic. The fact that $h$ is strictly positive
follows from the monotonicity of $g$ in combination with $g(1)=1$ and
the periodicity of $h$.
\end{pf}

\subsection*{\texorpdfstring{Step 4: Proof of Theorem \protect\ref{Thm:Reg_var_in_0} for $f \in\SolM^{\beta}$}
{Step 4: Proof of Theorem 11.1 for $f \in\SolM^{\beta}$}}

Let $f \in\SolM^{\beta}$. It suffices to show that for any sequence
$t_n \downarrow0$ (where $t_n$ is chosen from a fixed residue class of
$\R^{+} \operatorname{mod} r^{\Z}$ in the $r$-geometric case) there exists a
subsequence such that the convergence in~\eqref{eq:Reg_var_in_0} holds
along this subsequence on $\G(T) \cap(0,1]$ ($\G(T)$ is the closed
multiplicative subgroup generated by $T$).
This is what Lemma~\ref{Lem:BK's_trick} does.

\subsection*{Step 5: Proof that $\SolM^{\beta} = \SolM$}

In the fifth step, we fix $f \in\SolM$ with disintegration $M$ and
show that $D_{\beta}(t) = t^{-\beta}(1-f(t))$ satisfies the growth
condition in the definition of the set $\SolM^{\beta}$ in equation\
\eqref{eq:Finf^theta} and, thus, that $f \in\SolM^{\beta}$. To this
end, let $\newW:= -\log M(1)$. Then, by Lemma~\ref
{Lem:disintegration_via_HSL}(c),
\begin{equation} \label{eq:Fbar_disintegration_via_HSL}
\lim_{t \to\infty} \sum_{v \in\mathcal{T}_t} e^{-\alpha S(v)}
D_{\alpha}\bigl(e^{-S(v)}\bigr) = \newW\qquad \mbox{a.s.}
\end{equation}
As in Lemma~\ref{Lem:disintegration varphi} and Theorem \ref
{Thm:appr_W}, let $\Phi$ be the disintegration of $\varphi$, and
recall that $D(t) = t^{-1}(1-\varphi(t))$, which is slowly varying at
$0$. Applying Lemma~\ref{Lem:disintegration_via_HSL}(c) again,
\begin{equation} \label{eq:W}
\lim_{t \to\infty} \sum_{v \in\mathcal{T}_t} e^{-\alpha S(v)}
D\bigl(e^{-\alpha S(v)}\bigr)= W
\end{equation}
with $W := -\log\Phi(1)$, where $W$ has Laplace transform $\varphi$
and is an endogenous fixed point w.r.t.\ $T^{(\alpha)}$ (see Lemmas
\ref{Lem:Identify_end_FP_and_disintegration}(a) and \ref
{Lem:disintegration varphi}).

The idea now is to bound $D_{\alpha}$ using $D$ and thereby to bound
the behavior of $D_{\alpha}$ at zero. Let
\[
\Kl := \liminf_{t \to\infty} \frac{D_{\alpha
}(e^{-t})}{D(e^{-\alpha t})}
\quad \mbox{and}\quad
\Ku := \limsup_{t \to\infty} \frac{D_{\alpha
}(e^{-t})}{D(e^{-\alpha t})}.
\]

The next lemma gives the only property of $D_{\alpha}$ in addition to
\eqref{eq:Fbar_disintegration_via_HSL} that is relevant for the
subsequent results in the fifth step.

\begin{Lemma} \label{Lem:mon_bound}
For any $c > 0$ there is a $\delta> 0$ such that
\[
\frac{D_{\alpha}(e^{-(x+a)})}{D_{\alpha}(e^{-x})} \leq e^{\delta}
\quad \mbox{and}\quad
\frac{D_{\alpha}(e^{-(x-a)})}{D_{\alpha}(e^{-x})} \geq e^{-\delta}
\]
for all $x \in\R$ and $0 \leq a \leq c$.
\end{Lemma}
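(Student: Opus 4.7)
The plan is to reduce the claim to a direct comparison using only the monotonicity of $f$, with no need for any of the deep machinery developed earlier in the paper. I would begin by making the substitution $t = e^{-x}$ and $s = e^{-a}$, so that $s \in [e^{-c},1]$ as $a$ ranges over $[0,c]$. Then
\begin{equation*}
\frac{D_{\alpha}(e^{-(x+a)})}{D_{\alpha}(e^{-x})} ~=~ \frac{D_{\alpha}(ts)}{D_{\alpha}(t)} ~=~ \frac{1-f(ts)}{1-f(t)}\cdot s^{-\alpha},
\end{equation*}
and analogously
\begin{equation*}
\frac{D_{\alpha}(e^{-(x-a)})}{D_{\alpha}(e^{-x})} ~=~ \frac{D_{\alpha}(t/s)}{D_{\alpha}(t)} ~=~ \frac{1-f(t/s)}{1-f(t)}\cdot s^{\alpha}.
\end{equation*}

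For the first inequality, since $s \leq 1$ we have $ts \leq t$, and since $f \in \mathcal{M}$ is decreasing we obtain $f(ts) \geq f(t)$, hence $(1-f(ts))/(1-f(t)) \leq 1$. Combined with $s^{-\alpha} \leq e^{\alpha c}$, this yields the upper bound with $\delta := \alpha c$. For the second inequality, the same monotonicity applied at $t/s \geq t$ gives $f(t/s) \leq f(t)$, hence $(1-f(t/s))/(1-f(t)) \geq 1$, and since $s^{\alpha} \geq e^{-\alpha c}$ the lower bound follows with the same $\delta$.

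The only thing to check is that the denominator $1-f(t)$ is strictly positive, so the quotients are meaningful; but this was already observed at the start of the proof of Lemma \ref{Lem:Selection} (it follows from the $\SolM$ hypothesis via Lemma 6.2 of \cite{AM2009}). There is no real obstacle in this proof — the monotonicity of any $f \in \mathcal{M}$ makes the estimate almost immediate, with the constant $\delta = \alpha c$ working uniformly in $x \in \R$.
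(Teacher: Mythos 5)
Your proof is correct and uses exactly the same ingredients as the paper's: the positivity $1-f(t)>0$, the monotonicity of $f\in\mathcal{M}$ (equivalently, that $e^{-\alpha x}D_{\alpha}(e^{-x})=1-f(e^{-x})$ is decreasing in $x$), and the elementary bound on the $t^{-\alpha}$ scaling factor, giving $\delta=\alpha c$. The paper merely packages the same computation as a single chain of equalities and inequalities and then observes the lower bound is the reciprocal of the upper one, so the two arguments are the same in substance.
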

\begin{pf}
Recall that $1-f(t) > 0$ for all $t > 0$ by~\cite{AM2008}, Lemma 6.2.
Since $e^{-\alpha x} D_{\alpha}(e^{-x}) = 1-f(e^{-x})$ decreases,
\[
\frac{D_{\alpha}(e^{-(x+a)})}{D_{\alpha}(e^{-x})}
=
\frac{e^{-\alpha(x+a)} D_{\alpha}(e^{-(x+a)})}{e^{-\alpha(x+a)}
D_{\alpha}(e^{-x})}
\leq
\frac{e^{-\alpha x} D_{\alpha}(e^{-x})}{e^{-\alpha(x+a)} D_{\alpha
}(e^{-x})} \\
= e^{\alpha a} \leq e^{\alpha c}
\]
for any $0 \leq a \leq c$. The second estimation is just the reciprocal
of the first.
\end{pf}

\begin{Lemma} \label{Lem:K_l_K_u}
Under~\eqref{eq:A1}--\eqref{eq:A5}, the following assertions are true:
\begin{longlist}[(a)]
\item[(a)]
$0 < \Kl\leq\Ku< \infty$;
\item[(b)]
$\varphi(\Ku t^{\alpha}) \leq f(t)
\leq\varphi(\Kl t^{\alpha})$
for all $t \geq0$;
\item[(c)]
$\Kl D(\Kl t^{\alpha}) \leq D_{\alpha}(t)
\leq\Ku D(\Ku t^{\alpha})$ for all $t \geq0$.
\end{longlist}
\end{Lemma}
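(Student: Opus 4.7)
The plan is to prove all three parts from a single almost sure pathwise inequality. First note that (b) and (c) are equivalent algebraic rearrangements, both reducing to $\varphi(K_u t^\alpha) \leq f(t) \leq \varphi(K_l t^\alpha)$ via the identities $1-f(t)=t^\alpha D_\alpha(t)$ and $1-\varphi(u)=u\pD_1(u)$. The central estimate to establish is
\begin{equation*}
K_l\, t^\alpha\, W \;\leq\; -\log M(t) \;\leq\; K_u\, t^\alpha\, W \qquad \text{a.s., for every } t \geq 0,
\end{equation*}
from which (b) follows by exponentiating and taking expectations, and (a) follows by specialising to $t=1$.

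To prove this inequality, Lemma \ref{Lem:disintegration_via_HSL}(c) is applied in two ways. Applied to $f$, it gives $\sum_{v\in\mathcal T_u}(1-f(tL(v)))\to -\log M(t)$ a.s.\ as $u\to\infty$. Applied to $\varphi$, which solves \eqref{eq:FE1} for the weight sequence $T^{(\alpha)}$, and using that $L^{(\alpha)}(v)=L(v)^\alpha$ and $S^{(\alpha)}(v)=\alpha S(v)$ so that the first-exit HSL at level $\alpha u$ in the $T^{(\alpha)}$-tree coincides with $\mathcal T_u$, it gives $\sum_{v\in\mathcal T_u}(1-\varphi((tL(v))^\alpha))\to -\log\Phi(t^\alpha)$ a.s. The characteristic exponent of $T^{(\alpha)}$ is $1$ and $1-\varphi$ is regularly varying of index $1$ at $0$ by Proposition \ref{Prop:essential_uniqueness_and_reg_var}(b), so Lemma \ref{Lem:Identify_end_FP_and_disintegration}(b) applied to $\varphi$ yields $\Phi(s)=e^{-sW}$ a.s., whence the second limit equals $t^\alpha W$. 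For $v\in\mathcal T_u$ one has $tL(v)\leq te^{-u}\to 0$ uniformly in $v$, so the liminf/limsup definitions of $K_l,K_u$ in terms of $\rho(s)=(1-f(s))/(1-\varphi(s^\alpha))$ give, for any $\varepsilon>0$ and all sufficiently large $u$,
\begin{equation*}
(K_l-\varepsilon)\bigl(1-\varphi((tL(v))^\alpha)\bigr)\;\leq\;1-f(tL(v))\;\leq\;(K_u+\varepsilon)\bigl(1-\varphi((tL(v))^\alpha)\bigr)
\end{equation*}
for every $v\in\mathcal T_u$. Summing over $\mathcal T_u$, passing $u\to\infty$, and then $\varepsilon\downarrow 0$ yields the pathwise bound.

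For (a), specialising to $t=1$ gives $K_l W \leq \overline W \leq K_u W$ a.s., where $\overline W=-\log M(1)$. Rescaling $f$ so that $f(1)<1$ (permissible because $f\in\mathcal M$), one has $\E e^{-\overline W}=f(1)<1$, hence $\Prob(\overline W>0)>0$; non-triviality of $\varphi$ gives $W$ a.s.\ finite with $\Prob(W>0)>0$. Combined with $K_l W \leq \overline W \leq K_u W$, these facts force $K_l>0$ and $K_u<\infty$ provided $\overline W<\infty$ a.s.\ (equivalently $M(1)>0$ a.s.). The main obstacle I foresee is confirming this last point: a natural route is to note that $t\mapsto 1-\Prob(M(t)=0)$ itself satisfies \eqref{eq:FE1} with value $1$ at $0$, so that the presence of $f$ together with the structural uniqueness developed in this section should rule out $\Prob(M(t)=0)>0$.
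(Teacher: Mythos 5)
Your pathwise inequality
\begin{equation*}
K_l\, t^\alpha\, W \;\leq\; -\log M(t) \;\leq\; K_u\, t^\alpha\, W \qquad\text{a.s.}
\end{equation*}
is correct as stated (it is vacuously true when $\Kl=0$ or $\Ku=\infty$), and if one already knew $0<\Kl\le\Ku<\infty$ it would yield (b) and (c). But it cannot be used to \emph{prove} part (a), because the bounds go the wrong way. To rule out $\Ku=\infty$ you would need an inequality of the form $\overline W\ \geq\ (\text{something non-trivial}\times\Ku)\,W$, so that $\Ku=\infty$ forces $\overline W=\infty$ on $\{W>0\}$, which is then excluded. Your upper bound $\overline W\le \Ku W$ is vacuous when $\Ku=\infty$. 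Likewise, to get $\Kl>0$ you would need $\overline W\le (\text{small}\,+\,\text{something}\times\Kl)\,W$, so that $\Kl=0$ forces $\overline W=0$ a.s.\ and contradicts $\Prob(\overline W>0)>0$; your lower bound $\Kl W\le\overline W$ is vacuous when $\Kl=0$. In short, you prove $\overline W$ is sandwiched between multiples $\Kl W$ and $\Ku W$, but the non-degeneracy of $W$ and $\overline W$ places no restriction on such a sandwich when $\Kl=0$ or $\Ku=\infty$; your own worry about $\overline W<\infty$ is beside the point.

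The paper's proof avoids this by not passing to the uniform two-sided bound $\Kl-\varepsilon\le\rho(s)\le\Ku+\varepsilon$. Instead it picks a deterministic sequence $t_n\to\infty$ along which $D_\alpha(e^{-t_n})/\pD_1(e^{-\alpha t_n})$ converges to $\Ku$ (respectively to $\Kl$), splits the sum over $\mathcal T_{t_n}$ into the terms with $S(v)\le t_n+c$ and the rest, controls the tail via Proposition \ref{Prop:ratios} (ratio convergence for the general branching process), and bounds the near part via Lemma \ref{Lem:mon_bound}. This gives
\begin{equation*}
\overline W\ \geq\ e^{-\delta}\Ku(1-\varepsilon)\,W
\qquad\text{and}\qquad
\overline W\ \leq\ (e^{\delta}\Kl+\Ku\varepsilon)\,W
\end{equation*}
a.s.\ on $S$ -- exactly the reversed inequalities your argument lacks -- and then the non-degeneracy of $W$ and $\overline W$ (after the rescaling you also use) does the rest. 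You should restructure the proof of (a) around this near/far decomposition and the subsequence realising the $\limsup$/$\liminf$; the approximation of $\overline W$ via Lemma \ref{Lem:disintegration_via_HSL}(d) and of $W$ via Theorem \ref{Thm:appr_W} that you invoke are indeed the right ingredients, but they have to be combined with the ratio estimate rather than with a global sandwich.
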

\begin{pf}
Lemma~\ref{Lem:disintegration_via_HSL}(c) and Theorem \ref
{Thm:appr_W} imply that
\begin{equation} \label{twolimits}
\qquad \lim_{t \to\infty} D (e^{-\alpha t} ) \sum_{v \in\mathcal{T}_t}
e^{-\alpha S(v)}
= W
= \lim_{t \to\infty} \sum_{v \in\mathcal{T}_t} e^{-\alpha S(v)} D
\bigl(e^{-\alpha S(v)} \bigr)\qquad \mbox{a.s.}\hspace*{-5pt}
\end{equation}
Since $D$ is decreasing
\begin{eqnarray*}
&&\lim_{t \to\infty} \frac{\sum_{v \in\mathcal{T}_t} e^{-\alpha
S(v)} D(e^{-\alpha S(v)}) \1_{\{S(v) \leq t+c\}}}{\sum_{v \in\mathcal
{T}_t} e^{-\alpha S(v)} D(e^{-\alpha S(v)})} \\
&&\qquad  \geq
\lim_{t \to\infty} \frac{D(e^{-\alpha t}) \sum_{v \in\mathcal
{T}_t} e^{-\alpha S(v)} \1_{\{S(v)-t \leq c\}}}{\sum_{v \in\mathcal
{T}_t} e^{-\alpha S(v)} D(e^{-\alpha S(v)})} \\
&&\qquad  = \lim_{t \to\infty} \frac{\sum_{v \in\mathcal{T}_t}
e^{-\alpha S(v)} \1_{\{S(v)-t \leq c\}}}{\sum_{v \in\mathcal{T}_t}
e^{-\alpha S(v)}} \frac{D (e^{-\alpha t} ) \sum_{v \in\mathcal
{T}_t} e^{-\alpha S(v)}}{\sum_{v \in\mathcal{T}_t} e^{-\alpha S(v)}
D(e^{-\alpha S(v)})}.
\end{eqnarray*}
Now, by Proposition~\ref{Prop:ratios} with $\beta= \alpha$, the
first term tends to a limit $\geq1-\varepsilon$ for given
$\varepsilon> 0$ when $c$ is large enough. The second goes to one by
\eqref{twolimits} on $\{0<W<\infty\}$,\vadjust{\goodbreak} which almost surely coincides
with $S$, the survival set.
Now, using Lemma~\ref{Lem:mon_bound} and that $D(e^{-\alpha x})$ is
increasing in $x$,
\begin{eqnarray*}
\sum_{v \in\mathcal{T}_t} e^{-\alpha S(v)} D_{\alpha}\bigl(e^{-S(v)}\bigr)
& \geq&
\sum_{v \in\mathcal{T}_t} e^{-\alpha S(v)} D_{\alpha}\bigl(e^{-S(v)}\bigr) \1
_{\{S(v) \leq t+c\}} \\
& \geq&
e^{-\delta} D_{\alpha}\bigl(e^{-(t+c)}\bigr) \sum_{v \in\mathcal{T}_t}
e^{-\alpha S(v)} \1_{\{S(v) \leq t+c\}} \\
& \geq&
e^{-\delta} \frac{D_{\alpha}(e^{-(t+c)})}{D(e^{-\alpha(t+c)})}
\sum_{v \in\mathcal{T}_t} e^{-\alpha S(v)} D\bigl(e^{-\alpha S(v)}\bigr) \1_{\{
S(v) \leq t+c\}}
\end{eqnarray*}
for some $\delta> 0$. Therefore, letting $t \to\infty$ along an
appropriate sequence,
\begin{equation} \label{eq:K_u_finite}
\overline{W} \geq e^{-\delta} \Ku(1-\varepsilon) W\qquad  \mbox{a.s.}
\end{equation}
Since $\E\Phi(1) = \varphi(1) < 1$, we have $1-q := \Prob(W > 0) >
0$. On the other hand, as a consequence of the regular variation of
$1-\varphi$ at $0$, finiteness of $\Ku$ is not affected by replacing
$f(t)$ by $f(ct)$ for $c>0$, although the numerical value of $\Ku$ may
change. Thus, by rescaling $f$ in this way, we can assume that $f(1) >
q$. Then, $f(1)=\E e^{-\overline{W}} > q$ and so $\Prob(\overline
{W} = \infty) < 1-q$. Consequently, $\Prob(W > 0, \overline
{W}<\infty) > 0$. We now conclude from~\eqref{eq:K_u_finite} that
$\Ku$ is finite for the rescaled $f$ and thus also for the original
$f$. Then, using Lemma~\ref{Lem:disintegration_via_HSL}(c) and the
slow variation of $D$ at $0$, for any $t > 0$,
\begin{eqnarray*}
-\log M(t)
& = &
\lim_{u \to\infty} \sum_{v \in\mathcal{T}_u} t^{\alpha}
e^{-\alpha S(v)} D_{\alpha}\bigl(te^{-S(v)}\bigr) \\
& \leq&
\lim_{u \to\infty} \sum_{v \in\mathcal{T}_u} t^{\alpha}
e^{-\alpha S(v)} \Ku D\bigl(te^{-\alpha S(v)}\bigr) \\
& = &
t^{\alpha} \Ku W\qquad \mbox{a.s.}
\end{eqnarray*}
After an appeal to~\eqref{eq:Disintegration_integrated}, we deduce
that $f(t) \geq\varphi(\Ku t^{\alpha})$, where we used that $W$ has
Laplace transform $\varphi$. This proves the second half of each of
(a) and (b).

In a similar way, using Lemma~\ref{Lem:mon_bound} and that
$D(e^{-\alpha x})$ is increasing in $x$,
\begin{eqnarray*}
\sum_{v \in\mathcal{T}_t} e^{-\alpha S(v)} D_{\alpha}\bigl(e^{-S(v)}\bigr)
& \leq&
e^{\delta} D_{\alpha}(e^{-t})
\sum_{v \in\mathcal{T}_t} e^{-\alpha S(v)} \1_{\{S(v) \leq t+c\}} \\
& &
{}+ \sum_{v \in\mathcal{T}_t} e^{-\alpha S(v)} D_{\alpha}\bigl(e^{-S(v)}\bigr)
\1_{\{S(v) > t+c\}} \\
& \leq&
e^{\delta} \frac{D_{\alpha}(e^{-t})}{D(e^{-\alpha t})}
\sum_{v \in\mathcal{T}_t} e^{-\alpha S(v)} D\bigl(e^{-\alpha S(v)}\bigr) \1_{\{
S(v) \leq t+c\}} \\
& &
{}+ \sum_{v \in\mathcal{T}_t} e^{-\alpha S(v)} D_{\alpha}\bigl(e^{-S(v)}\bigr)
\1_{\{S(v) > t+c\}}.
\end{eqnarray*}
Letting $t$ tend to infinity along an appropriate sequence, we obtain
with the help of Proposition~\ref{Prop:ratios}
\[
\overline{W}
\leq
e^{\delta} \Kl W + \Ku\varepsilon W
=
(e^{\delta} \Kl+ \Ku\varepsilon) W\qquad
\mbox{a.s.}
\]
where $\varepsilon> 0$ depends on the choice of $c$. Since $\E M(1) =
f(1) < 1$ by~\cite{AM2008}, Lemma~6.2, we have $\Prob(\overline{W} >
0) > 0$. On the other hand, $W < \infty$ a.s.\ by~\eqref{eq:E}. Then,
since $\varepsilon$ can be made arbitrarily small, $\Kl> 0$ follows,
for otherwise $\overline{W}=0$ a.s. Now arguing as in the first part
of the proof, we obtain $f(t) \leq\varphi(\Kl t^{\alpha})$, $t > 0$.
Part (c) is just a rearrangement of part (b).
\end{pf}

\begin{Lemma} \label{Lem:Finf=Finfalpha}
Assuming~\eqref{eq:A1}--\eqref{eq:A5}, we have that $\SolM^{\beta}
= \SolM$.
\end{Lemma}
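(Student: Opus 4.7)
The strategy is to combine the sandwich from Lemma \ref{Lem:K_l_K_u}(c), namely $\Kl \pD_1(\Kl t^\alpha) \leq D_\alpha(t) \leq \Ku \pD_1(\Ku t^\alpha)$, with the slow variation of $\pD_1$ at $0$ guaranteed by Proposition \ref{Prop:essential_uniqueness_and_reg_var}(b). The inclusion $\SolM^\beta \subseteq \SolM$ is immediate from \eqref{eq:Finf^theta}, so I fix $f \in \SolM$ and verify the growth condition. Writing $D_\beta(ut)/D_\beta(t) = [D_\alpha(ut)/D_\alpha(t)] \cdot u^{\alpha-\beta}$ and invoking the sandwich yields
\begin{equation*}
\frac{D_\beta(ut)}{D_\beta(t)} ~\leq~ \frac{\Ku}{\Kl} \cdot \frac{\pD_1(\Ku(ut)^\alpha)}{\pD_1(\Kl t^\alpha)} \cdot u^{\alpha - \beta},
\end{equation*}
reducing matters to controlling the ratio $\pD_1(\Ku(ut)^\alpha)/\pD_1(\Kl t^\alpha)$ uniformly in $u \in (0,1]$ as $t \downarrow 0$.

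If \eqref{eq:A4b} holds (so $\beta = \theta < \alpha$), I would apply Potter's bound for slowly varying functions at $0$: for every $\delta > 0$ there exist $c > 0$ and $C_\delta < \infty$ with $\pD_1(x)/\pD_1(y) \leq C_\delta \max((x/y)^\delta, (x/y)^{-\delta})$ for all $x,y \in (0,c]$. Setting $x = \Ku(ut)^\alpha$, $y = \Kl t^\alpha$ and exploiting $u \leq 1$ (so that the max is dominated by $((\Ku/\Kl) u^\alpha)^{-\delta}$ up to a constant depending on $\Ku/\Kl$) gives $\pD_1(\Ku(ut)^\alpha)/\pD_1(\Kl t^\alpha) \leq C'_\delta u^{-\alpha \delta}$. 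The bound on $D_\beta(ut)/D_\beta(t)$ thus becomes a constant multiple of $u^{\alpha - \theta - \alpha\delta}$, and choosing $\delta$ small enough that $\alpha - \theta - \alpha \delta > 0$ makes this factor at most $1$ for $u \leq 1$, yielding the desired uniform bound.

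If \eqref{eq:A4b} fails so that only \eqref{eq:A4a} holds and $\beta = \alpha$, the prefactor $u^{\alpha-\beta}$ equals $1$ and it suffices to bound the ratio of $\pD_1$-terms by a constant. Under \eqref{eq:A4a}, Lyons' theorem (cf.\ Proposition \ref{Prop:integrable_endogenous_FP}) gives $\E W^{(\alpha)} = 1$, so taking $\varphi$ to be the Laplace transform of $W^{(\alpha)}$ one has $\pD_1(x) = (1-\varphi(x))/x \to 1$ as $x \downarrow 0$, whence $\pD_1$ is bounded above on a right neighbourhood of $0$; convexity of $\varphi$ makes $\pD_1$ decreasing, so $\pD_1(\Kl t^\alpha) \geq \pD_1(\Kl c^\alpha) > 0$ for $t \leq c$. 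Together these give the required uniform upper bound on $D_\alpha(ut)/D_\alpha(t)$.

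The principal obstacle is the first case: the Potter exponent $\delta$ must be matched to the gap $\alpha - \theta$ so as to absorb the possibly exploding ratio of slowly varying terms through the extra factor $u^{\alpha - \beta}$. The matching is possible precisely because \eqref{eq:A4b} opens a strictly positive gap $\alpha - \theta > 0$; with this in hand, the rest is routine regular-variation bookkeeping around the sandwich of Lemma \ref{Lem:K_l_K_u}(c).
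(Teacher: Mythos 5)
Your argument is correct and follows essentially the same route as the paper: both invoke the sandwich from Lemma \ref{Lem:K_l_K_u} and the slow variation of $\pD_1$ at $0$, splitting into the case $\beta=\theta<\alpha$ (where a Potter-type bound on $\pD_1$, equivalently Theorem 1.3.1 of \cite{BGT1989} as the paper cites, produces a $u^{-\varepsilon}$ factor absorbed by the positive gap $\alpha-\theta$) and the case $\beta=\alpha$ under \eqref{eq:A4a} (where $\E W^{(\alpha)}=1$ makes $\pD_1$ bounded above and below near $0$, so no gap is needed). The minor cosmetic differences --- citing Lemma \ref{Lem:K_l_K_u}(c) rather than (b), and applying Potter directly to $\pD_1(\Ku(ut)^\alpha)/\pD_1(\Kl t^\alpha)$ rather than first replacing $\Kl$ by $\Ku$ via monotonicity --- do not change the substance.
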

\begin{pf}
By Lemma~\ref{Lem:K_l_K_u}, $\varphi(\Ku t^{\alpha}) \leq f(t) \leq
\varphi(\Kl t^{\alpha})$ for all $t \geq0$. Thus,
\begin{equation} \label{eq:reg_ineq}
\frac{1-f(ut)}{1-f(t)} \leq \frac{1-\varphi(\Ku(ut)^{\alpha
})}{1-\varphi(\Kl t^{\alpha})}
\end{equation}
for all $u \geq0$ and $t > 0$.

Suppose first that~\eqref{eq:A4a} holds. Then we can assume w.l.o.g.\
that $W = W^{(\alpha)}$. Then $\varphi$ is differentiable at $0$ with
derivative $-1$ so that
\[
\frac{1-\varphi(\Ku(ut)^{\alpha})}{1-\varphi(\Kl t^{\alpha})}
=
\frac{1-\varphi(\Ku(ut)^{\alpha})}{\Ku(ut)^{\alpha}}
\frac{\Kl t^{\alpha}}{1-\varphi(\Kl t^{\alpha})}
\frac{\Ku}{\Kl}u^{\alpha}
\leq
C \frac{\Ku}{\Kl} u^{\alpha}
\]
for some $C < \infty$, all $u \leq1$ and all sufficiently small $t > 0$.

The situation is more delicate if~\eqref{eq:A4b} is assumed instead of
\eqref{eq:A4a}. We show that for $f \in\SolM$ and arbitrary
$\varepsilon> 0$, there exist $K,c > 0$ such that
\begin{equation} \label{eq:eps_ineq}
\frac{1-f(ut)}{u^{\alpha} (1-f(t))} \leq K u^{-\varepsilon}
\end{equation}
for all $u \leq1$ and all $t \leq c$. We deduce from \eqref
{eq:reg_ineq} that (keep in mind that $D(s) = (1-\varphi(s))/s$ is
decreasing in $s$)
\begin{eqnarray*}
\frac{1-f(ut)}{u^{\alpha} (1-f(t))}
&\leq& \frac{\Ku}{\Kl} \frac{D(\Ku t^{\alpha})}{D(\Kl t^{\alpha})}
\frac{D(\Ku(ut)^{\alpha})}{D(\Ku t^{\alpha})}\\
&\leq& \frac{\Ku}{\Kl} \frac{D(\Ku(ut)^{\alpha})}{D(\Ku t^{\alpha})}.
\end{eqnarray*}
An application of Theorem 1.3.1 in~\cite{BGT1989} to the slowly
varying function $D$ shows that the last ratio can be bounded from
above by a constant times $u^{\varepsilon}$ in a right neighborhood of
$0$; in other words, we have established~\eqref{eq:eps_ineq}. Since we
can choose $\varepsilon\leq\alpha- \theta$, the proof is complete.
\end{pf}

%s12 ###
\section{\texorpdfstring{The proofs of Theorems \protect\ref{Thm:Disintegration} and \protect\ref{Thm:endogeny}}
{The proofs of Theorems 8.3 and 6.2}} \label{sec:Proof_disintegration}
\mbox{}
\begin{pf*}{Proof of Theorem~\ref{Thm:Disintegration}}
Let $f \in\SolM$ and let $M$ denote the corresponding disintegrated
fixed point. Then, using~\eqref{eq:Reg_var_in_0}, we obtain from
\eqref{eq:W*_st} and~\eqref{eq:u^alpha_W*_t} in the proof of Lemma
\ref{Lem:Identify_end_FP_and_disintegration} that\vadjust{\goodbreak} for any $u>0$ and $s
= 1$ (nongeometric case) or $u \in r^{\Z}$ and $s \in(r^{-1},1]$
($r$-geometric case)
\[
- \log M(su)
=
u^{\alpha} (- \log M(s)) \qquad \mbox{a.s.}
\]
Moreover, $-\log M(s)$ is an endogenous fixed point w.r.t. $T^{(\alpha
)}$ by Lemma~\ref{Lem:Identify_end_FP_and_disintegration}. Putting $W
= -\log M(1)$, we see that $M$ satisfies~\eqref{eq:Disintegration} in
the continuous case. In the $r$-geometric case, Proposition \ref
{Prop:uniqueness_of_end_FP} comes into play because it ensures that for
any $s \in(r^{-1},1]$ there exists a constant $h(s) > 0$ such that
$-\log M(s) = h(s) s^{\alpha} W$ a.s. Now we define $h(us) := h(s)$
for $u \in r^{\Z}$ and $s \in(r^{-1},1]$. Thus, $h$ is defined on the
whole positive halfline $(0,\infty)$. Using $- \log M(su) = u^{\alpha
} (- \log M(s))$ a.s.\ for $u \in r^{\Z}$ and $s \in(r^{-1},1]$, we
see that $M$ has a representation as in~\eqref{eq:Disintegration} in
the $r$-geometric case as well. To see that $h \in\mathfrak{H}_r$ it
remains to prove that $t \mapsto h(t)t^{\alpha}$ is increasing. But in
view of~\eqref{eq:Disintegration} and \eqref
{eq:Disintegration_integrated}, this immediately follows from the
monotonicity of~$f$.\looseness=1

We have shown so far that for any disintegrated fixed point $M$ there
exist an endogenous fixed point $W$ and some function $h \in\mathfrak
{H}_r$ such that~\eqref{eq:Disintegration} holds. Since endogenous
fixed points are unique up to scaling by Proposition \ref
{Prop:uniqueness_of_end_FP} and $\mathfrak{H}_r$ is invariant under
scaling with positive factors, it is clear that one can choose $W$
independent of $f$.
\end{pf*}

Before we prove Theorem~\ref{Thm:endogeny}, we need some more
terminology. First, given the sequence $T$, the smoothing transform on
the set $\mathcal{P}(\R^+)$ of probability distributions on $\R^+$
maps a distribution $P \in\mathcal{P}(\R^+)$ to the distribution of
$\sum_{i \geq1} T_i X_i$ where $(X_i)_{i \geq1}$ is a sequence of
i.i.d. random variables with common distribution $P$. The corresponding
bivariate smoothing transform maps a distribution $P \in\mathcal
{P}(\R^+\times\R^+)$ to the distribution of $(\sum_{i \geq1} T_i
X_i, \sum_{i \geq1} T_i Y_i)$ where $(X_1,Y_1),(X_2,Y_2),\ldots$ is
a sequence of i.i.d. two-dimensional random vectors with common
distribution $P$. Notice that the bivariate transform uses the same
realization of $T$ in both components.

\begin{pf*}{Proof of Theorem~\ref{Thm:endogeny}}
Let $P$ be a distribution solving the distributional recursion \eqref
{eq:sm-fe} with Laplace transform $\varphi$, and let $(M_n(t))_{n \geq
0}$ for $t\ge0$ be the corresponding multiplicative martingales. By
Theorem~\ref{Thm:Disintegration}, their limits are given by $M(t) =
\exp(-hWt)$ a.s.\ for some $h>0$ where $W$ is endogenous w.r.t.
$T^{(\alpha)}$. By Lemma~\ref{Lem:Disintegration}, $\E M(t) = \varphi
(t)$ for all $t \geq0$, and thus $hW$ has Laplace transform $\varphi$
and distribution $P$. By replacing $W$ by $hW$, we can assume w.l.o.g.
that $h=1$. Thus the definition of endogenous fixed points w.r.t.
$T^{(\alpha)}$ entails the existence of an endogenous RTP with
marginal~$P$. The form~\eqref{eq:form_of_the_RTP} of the RTP follows
from Lemma~\ref{Lem:disintegration_via_HSL}(a).
To apply Theorem 11(c) in~\cite{AB2005}, consider the bivariate
Laplace transform $\psi_n$ of the $n$fold application of the bivariate
smoothing transform to the product measure $P \otimes P$. Denote by
$(X(v))_{v \in\V}$ and $(Y(v))_{v \in\V}$ two independent families
of i.i.d. random variables with distribution $P$. Then, for $(s,t) \in
[0,\infty)^2$, we have
\begin{eqnarray*}
\psi_n(s,t)
& = &
\E\exp\biggl(-s\sum_{|v|=n}L(v) X(v) - t \sum_{|v|=n} L(v)Y(v) \biggr) \\
& = &
\E \biggl( \E \biggl[ \exp \biggl(-s\sum_{|v|=n}L(v) X(v) - t \sum_{|v|=n} L(v)Y(v)
\biggr) \Big| \A_n \biggr] \biggr) \\
& = &
\E M_n(s) M_n(t)
 \to
\E M(s)M(t)
= \E e^{-(s+t) W} \qquad \mbox{as } n \to\infty.
\end{eqnarray*}
By the continuity theorem for Laplace transforms, the associated
distribution converges weakly to $\Prob((W,W) \in\cdot)$.
Invoking Theorem 11(c) in~\cite{AB2005}, it now follows that the
endogeny property holds, which means that any RTP with marginal $P$ is
endogenous. Further, Proposition~\ref{Prop:uniqueness_of_end_FP}
ensures that the endogenous RTP with marginal $P$ is unique. Since $P$
was an arbitrary solution to~\eqref{eq:sm-fe} and since any other
solution differs only by a scale factor, assertion (a) follows.

Turning to assertion (b), let $\{W_u\dvtx u \in\V\}$ be an endogenous RTP
associated with equation~\eqref{eq:SumFP} and $\alpha< 1$. It
suffices to show that $W_{u} = 0$ a.s. for all $u$. Assume that $\Prob
(W_{\varnothing}>0) > 0$. Using endogeny and equation \eqref
{eq:form_of_the_RTP}, we~get\looseness=-1
\[
W_{u}
=
\lim_{n \to\infty} \sum_{|v|=n} 1-\varphi([L(v)]_u) \qquad \mbox{a.s.}
\]\looseness=0
On the other hand, by Theorem~\ref{Thm:Reg_var_in_0}, the
corresponding Laplace transform~$\varphi$ satisfies $(1-\varphi
(st))/(1-\varphi(t)) \to s^{\alpha}$ as $t \downarrow0$, where in
the $r$-geometric case $s \in r^{\Z}$ and the limit $t \to0$ is
restricted to $t \in r^{\Z}$. Consequently, for all $n \geq0$,
\begin{eqnarray*}
\sum_{|u|=n} L(u) W_u
= W_{\varnothing}
& = &
\lim_{k \to\infty} \sum_{|v|=n+k} 1-\varphi(L(v)) \\
& = &
\lim_{k \to\infty} \sum_{|u|=n} \sum_{|v|=k} 1-\varphi
(L(u)[L(v)]_u) \\
& \geq&
\sum_{|u|=n} L(u)^{\alpha} \lim_{k \to\infty} \sum_{|v|=k}
1-\varphi([L(v)]_u) \\
& = &
\sum_{|u|=n} L(u)^{\alpha} W_u \qquad \mbox{a.s.}
\end{eqnarray*}
But $\sup_{|u|=n} L(u) \to0$ a.s.\ [see Lemma~\ref{dissecting}(a)],
contradicting the inequality.
\end{pf*}

%s13 ###
\section{Solutions in other sets of functions} \label{sec:non-monotonic}

The arguments characterizing monotonic solutions can be modified to
apply to other classes of functions.
What matters is how the functions in the class behave near the
origin.
A~function $f$ will be called\vadjust{\goodbreak} \emph{eventually uniformly continuous}
if it is uniformly continuous on $[K,\infty)$ for some finite $K$.
Then the new class is the set $\mathcal{U}$ consisting of all
functions $f\dvtx [0,\infty) \to[0,1]$ with $f(0)=1$ and $f(t) \to1$ as
$t \downarrow0$ such that $\log(1-f(e^{-z}))$ is eventually uniformly
continuous. [It should be possible to widen this class further, to
functions that are c\`{a}dl\`{a}g with $\log(1-f(e^{-z}))$
having a suitably behaved modulus of continuity, but that has not been
attempted.] Note that when $f \in\mathcal{U}$ it is automatic that
$f(t) < 1$ for all small enough $t > 0$.
We define $\SolU$ to be the set of functions $f \in\mathcal{U}$
solving the functional equation~\eqref{eq:FE1}. Much of the argument
carries over.
Lemma~\ref{Lem:Selection} is the first where the argument needs some
more substantial change.

\begin{Lemma} \label{Lem:evucon_selection} Lemma~\ref{Lem:Selection}
holds for
$f \in\SolU$ with $g$ continuous (rather than increasing).
\end{Lemma}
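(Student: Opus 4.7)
The plan is to pass to logarithms so that the eventual uniform continuity hypothesis can be exploited directly, converting the multiplicative selection problem into an additive one. Set $\psi(s) := \log(1-f(e^{-s}))$, which by the definition of $\mathcal{U}$ is uniformly continuous on $[K,\infty)$ for some finite $K$ (and well-defined there because $f(t) < 1$ for small $t > 0$). For $u = e^{-a} \in (0,1]$ and $t = e^{-s}$, the ratio $(1-f(ut))/(1-f(t))$ equals $\exp(h_s(a))$, where $h_s(a) := \psi(s+a) - \psi(s)$. Hence it suffices to extract a subsequence $s_n \to \infty$ along which $h_{s_n}$ converges to a continuous function $\tilde{g}$ on $[0,\infty)$ with $\tilde{g}(0) = 0$; then $g(u) := \exp(\tilde{g}(-\log u))$ defines a continuous function on $(0,1]$ with $g(1) = 1$ and $(1-f(ut_n))/(1-f(t_n)) \to g(u)$ for all $u \in (0,1]$, where $t_n := e^{-s_n}$.

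The key observation is that $\{h_s : s \geq K\}$ is an equicontinuous family on $[0,\infty)$: uniform continuity of $\psi$ yields, for every $\varepsilon > 0$, a $\delta > 0$ such that $|h_s(a)-h_s(b)| = |\psi(s+a)-\psi(s+b)| < \varepsilon$ whenever $|a-b| < \delta$, uniformly in $s \geq K$. Combined with the normalization $h_s(0) = 0$, chaining this estimate across a $\delta$-partition of any compact interval $[0,R]$ also yields uniform boundedness of $\{h_s\}_{s \geq K}$ on $[0,R]$. A standard diagonal argument based on the Arzel\`{a}--Ascoli theorem, applied along the exhaustion of $[0,\infty)$ by the intervals $[0,n]$, $n \in \N$, then produces a subsequence $s_n \to \infty$ on which $h_{s_n}$ converges uniformly on compact subsets of $[0,\infty)$ to a continuous limit $\tilde{g}$; the identity $h_{s_n}(0) = 0$ forces $\tilde{g}(0) = 0$.

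The main obstacle, such as it is, is mainly conceptual. In the original Lemma~\ref{Lem:Selection} monotonicity of $f$ supplied two things for free: the a priori bound $(1-f(ut))/(1-f(t)) \in [0,1]$ that made pointwise selection on rationals trivial via a Cantor diagonal argument, and monotonicity of the limit that had to be patched together at its countably many discontinuities. Here both are lost, and the eventual-uniform-continuity hypothesis must take over both roles. Fortunately, equicontinuity together with the normalization $h_s(0) = 0$ supplies the local boundedness needed to run Arzel\`{a}--Ascoli, and a uniformly convergent subsequence produces a continuous limit automatically, so no separate subsequence extraction at discontinuities is required.
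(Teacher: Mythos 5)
Your proof is correct and follows essentially the same route as the paper: both pass to logarithms, observe that the increments $h_s(a)=\psi(s+a)-\psi(s)$ form an equicontinuous, locally bounded family (using the normalization at $a=0$), invoke Arzel\`a--Ascoli to extract a locally uniformly convergent subsequence with a continuous limit, and then exponentiate to obtain $g$. The paper's version is terser but rests on exactly the same ideas.
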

\begin{pf}
The functions
$
H_t(z) = \log(1-f(te^{-z})) - \log(1-f(t))$ $(z \geq0)
$
are equicontinuous for all small enough $t$ and uniformly bounded at
$z=0$. Hence, by the Arzela--Ascoli theorem, for any sequence
decreasing to zero, there is a subsequence $(t_n)_{n \geq1}$ and a
continuous function $h$ such that
\[
H_{t_n}(z) = \log\biggl(\frac{1-f(t_n e^{-z})}{1-f(t_n)} \biggr)
\to h(z)\qquad  (z \geq0).
\]
The asserted convergence follows with $g(u) := \exp(h(-\log u))$, $u
\in(0,1]$.
\end{pf}

Using Lemma~\ref{Lem:evucon_selection} it is readily seen that Lemma
\ref{Lem:BK's_trick} also holds for $f \in\SolU^\beta$, which has
the natural definition.
Continuity, rather than monotonicity, is used to show that the limiting
function $\widetilde{g}$ in~\eqref{eq:CDE} satisfies $\widetilde
{g}>0$ on an interval including $0$ and then continuity implies $c=1$,
without the additional argument. Uniform continuity readily yields that
when $f \in\mathcal{U}$, the conclusion of Lemma~\ref{Lem:mon_bound} holds.
In this way the following theorem is obtained.
For it let $\mathfrak{C}_r$ be positive constants when $r=1$ and
positive, continuous, multiplicatively $r$-periodic functions otherwise.

\begin{Theorem} \label{Thm:evucon_solutions}
Suppose that conditions~\eqref{eq:A1}--\eqref{eq:E} hold. Then $\SolU
$ is given by the family in~\eqref{eq:F_inf=W_Lambda_alpha} when
parametrized by $h \in\mathfrak{C}_r$.
\end{Theorem}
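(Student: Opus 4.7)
The plan is to imitate the proof of Theorem \ref{Thm:Min_solutions} step by step, substituting eventual uniform continuity of $\log(1-f(e^{-\cdot}))$ for monotonicity of $f$ at every point where the latter was used. For the simple inclusion $\supseteq$, I would verify directly that any $f$ of the form \eqref{eq:F_inf=W_Lambda_alpha} with $h\in\mathfrak{C}_r$ lies in $\SolU$. The functional equation \eqref{eq:FE1} follows exactly as in Lemma \ref{Lem:simple_inclusions}, since the calculation there only uses $h(tT_i)=h(t)$ a.s. Continuity and multiplicative $r$-periodicity of $h$ force $h$ to be bounded on $(0,\infty)$ (it is determined by its values on the compact fundamental domain $[1,r]$), so $h(t)t^\alpha\to 0$ and $f(t)\to 1$ as $t\downarrow 0$. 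Finally, writing $1-\varphi(s)=s\ell(s)$ with $\ell$ slowly varying at $0$ by Theorem \ref{Prop:essential_uniqueness_and_reg_var}(b), the identity $\log(1-f(e^{-t}))=-\alpha t+\log h(e^{-t})+\log\ell(h(e^{-t})e^{-\alpha t})$ shows that the left-hand side is eventually uniformly continuous in $t$, since the first summand is linear, the second is continuous and $r$-periodic, and the third tends to a limit along $(\log r)\Z$-shifts by slow variation.

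For the reverse inclusion, I would reproduce Sections \ref{sec:wlog}--\ref{sec:Proof_disintegration} using the substitute lemmas already furnished by the author. Lemmas \ref{Lem:Disintegration}--\ref{Lem:HSL_FPE} and \ref{Lem:disintegration_via_HSL} apply to $f\in\SolU$ as noted, hence the reduction to \eqref{eq:A5} is valid; the selection step uses Lemma \ref{Lem:evucon_selection}; the Choquet--Deny step of Lemma \ref{Lem:BK's_trick} carries over with the continuity of the limit $g$ replacing its monotonicity (precisely what is needed in the continuous case to conclude that the bounded solution of \eqref{eq:CDE} is a positive constant, and similarly for the $r$-geometric case); and Lemma \ref{Lem:evucon_bound} replaces Lemma \ref{Lem:mon_bound} verbatim in the proof of Lemma \ref{Lem:K_l_K_u}. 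Hence Steps 1--4 of Section \ref{sec:Reg_var_in_0} give $\SolU^\beta=\SolU$ and the analogue of Theorem \ref{Thm:Reg_var_in_0} for $f\in\SolU$.

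With the scaling $(1-f(ut))/(1-f(t))\to u^\alpha$ in hand on the appropriate set of $u$, I would then reproduce the proof of Theorem \ref{Thm:Disintegration}. Lemma \ref{Lem:Identify_end_FP_and_disintegration}(a) shows that $W_t:=-\log M(t)$ is an endogenous fixed point w.r.t.\ $T^{(\alpha)}$ and that $W_{su}=u^\alpha W_s$ a.s.\ (for all $u>0$ in the non-geometric case, and for $u\in r^\Z$ and $s\in(r^{-1},1]$ in the $r$-geometric case). In the non-geometric case this already gives $M(t)=e^{-t^\alpha W}$ with the constant $h\equiv 1\in\mathfrak{C}_1$, and integration via \eqref{eq:Disintegration_integrated} yields $f(t)=\varphi(t^\alpha)$. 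In the $r$-geometric case, Proposition \ref{Prop:uniqueness_of_end_FP} furnishes, for each $s\in(r^{-1},1]$, a positive scalar $h(s)$ with $W_s=h(s)s^\alpha W$ a.s.; extending $r$-periodically gives a function $h$ on $(0,\infty)$ with $f(t)=\varphi(h(t)t^\alpha)$.

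The main obstacle will be verifying continuity of $h$ in the $r$-geometric case, so that $h\in\mathfrak{C}_r$. The identity $W_s=h(s)s^\alpha W$ holds a.s.\ for each fixed $s$, but not simultaneously in $s$, so $h$ is only defined as a pointwise ratio. Continuity must therefore be derived a posteriori from the representation $f(t)=\varphi(h(t)t^\alpha)$: using eventual uniform continuity of $\log(1-f(e^{-\cdot}))$ (built into the definition of $\mathcal{U}$), slow variation of $t^{-1}(1-\varphi(t))$ at $0$, and strict monotonicity of $\varphi$, one can invert the relation to express $h(t)$ as a continuous function of $t$ on $(r^{-1},1]$, and the $r$-periodic extension then lies in $\mathfrak{C}_r$. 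This continuity bookkeeping is the only essentially new point: the monotonic-case proof only required a measurable $h$ with $h(t)t^\alpha$ increasing, and that structural monotonicity is no longer available here.
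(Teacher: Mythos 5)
Your proposal is essentially correct and follows the paper's approach. The paper's treatment of Theorem~\ref{Thm:evucon_solutions} is deliberately terse: it merely asserts that Lemmas~\ref{Lem:evucon_selection} and~\ref{Lem:evucon_bound}, together with the unchanged Lemmas~\ref{Lem:Disintegration}--\ref{Lem:HSL_FPE} and~\ref{Lem:disintegration_via_HSL}, suffice to reproduce Sections~\ref{sec:wlog}--\ref{sec:Proof_disintegration}. You have correctly identified this plan and, usefully, supplied two details the paper leaves implicit: (i) the easy inclusion $\supseteq$, namely that $\varphi(h(\cdot)(\cdot)^\alpha)\in\mathcal{U}$ for $h\in\mathfrak{C}_r$, and (ii) the continuity of $h$ in the $r$-geometric case, where you rightly note that the monotone-case device (reading off monotonicity of $h(t)t^\alpha$ from monotonicity of $f$) is unavailable and must be replaced by inverting the deterministic identity $f(t)=\varphi(h(t)t^\alpha)$, pulling back by $r$-periodicity into the region where eventual uniform continuity of $\log(1-f(e^{-\cdot}))$ guarantees continuity of $f$, and using that $\varphi$ is a strictly decreasing continuous bijection onto its range. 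The only slight imprecision is the phrase that the slowly varying contribution ``tends to a limit along $(\log r)\Z$-shifts'': slow variation gives uniform vanishing of the increments $\log\ell(h(e^{-(t+a)})e^{-\alpha(t+a)})-\log\ell(h(e^{-t})e^{-\alpha t})$ on compact $a$-sets as $t\to\infty$ (via the uniform convergence theorem, Theorem~1.2.1 in \cite{BGT1989}), not convergence of the term itself, but this is exactly what eventual uniform continuity requires, so the conclusion stands.
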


\section*{Acknowledgments}

We are grateful to two anonymous referees for a careful reading of the
manuscript and for directing our attention to~\cite{AB2005}, Open
Problem~18.

%suskaldyti doi

% imsref loaded by dianan, 2011-05-04 15:58:01
%

\printaddresses

\end{document}